\documentclass[11pt,reqno]{amsart}

\usepackage{amsfonts, amssymb, amscd}
\usepackage{amsmath}
\usepackage{verbatim}
\usepackage{amssymb}
\usepackage{mathrsfs}
\usepackage{graphicx}
\usepackage{cite}
\usepackage[all]{xy}
\usepackage{tikz}
\usepackage[toc,page]{appendix}
\usepackage{tikz-cd}

\usepackage{graphicx}
\usepackage{float}

\theoremstyle{definition}
\newtheorem{theorem}{Theorem}[section]
\newtheorem{lemma}[theorem]{Lemma}
\newtheorem{proposition}[theorem]{Proposition}
\newtheorem{definition}[theorem]{Definition}

\newtheorem{corollary}[theorem]{Corollary}
\newtheorem{remark}[theorem]{Remark}
\newtheorem{conjecture}[theorem]{Conjecture}
\numberwithin{equation}{section}

\begin{document}
\title{On duality of certain GKZ hypergeometric systems}

\begin{abstract}
We study a pair of conjectures on better behaved GKZ hypergeometric systems of PDEs inspired by Homological mirror symmetry for crepant resolutions of Gorenstein toric singularities. We prove the conjectures in the case of dimension two.
\end{abstract}

\author{Lev Borisov}
\address{Department of Mathematics\\
Rutgers University\\
Piscataway, NJ 08854} \email{borisov@math.rutgers.edu}

\author{Zengrui Han}
\address{School of Mathematical Sciences\\
University of Science and Technology of China\\
Hefei, Anhui, 230026} \email{hanzr@mail.ustc.edu.cn}

\author{Chengxi Wang}
\address{Department of Mathematics\\
Rutgers University\\
Piscataway, NJ 08854} \email{cw674@math.rutgers.edu}

\maketitle

{\bf Key words}: hypergeometric systems, Gamma series, Euler characteristics pairing, twisted sectors, $K-$theory, duality.

\tableofcontents

\section{Introduction}\label{intro}
Homological Mirror Symmetry, proposed by Kontsevich in \cite{Kontsevich}, is a mathematical statement of equivalence of derived categories of coherent sheaves of complex manifolds with derived Fukaya categories of the mirror symplectic manifolds,  see \cite{Abouzaid, Sheridan}. In particular, it predicts that two manifolds with the same mirror must have equivalent derived categories of coherent sheaves.

\medskip
Moreover, physics predicts that these equivalences can be realized as some sort of parallel transports in isotrivial families of triangulated categories. As a consequence, when one passes to the complexified Grothendieck's groups,
one expects to have a flat vector bundle that interpolates from one manifold to another.
In  \cite{BorisovHorja} such vector bundle (or, rather, two bundles due to noncompactness issues) were constructed in the setting of stack resolutions of Gorenstein toric singularities. The two bundles should be related to each other by a duality which is reminiscent of Poincare duality between usual and compactly supported cohomology. The purpose of this paper is to verify the duality conjecture of  \cite{BorisovHorja} in the two-dimensional case.

\medskip
The setting of \cite{BorisovHorja} is as follows. Let $\Delta$ be a convex polytope in lattice $N_1$. We consider  $(\Delta,1)$ in $N=N_1\oplus {\mathbb Z}$ and the cone
$C={\mathbb R}_{\geq 0} (\Delta,1)$. Let $v_1,\ldots, v_n$ be lattice points of $(\Delta,1)$ that include all of its vertices. To these data, one can associate two systems of linear PDEs
on functions of $n$ variables $x_1,\ldots, x_n$, called $bbGKZ(C,0)$ and $bbGKZ(C^\circ,0)$ with solution spaces of dimension equal to the normalized volume of $\Delta$.

\medskip
In certain limits associated to simplicial subdivisions $\Sigma$ of $C$,
the results on \cite{BorisovHorja} give isomorphisms between the $K$-theory spaces of the toric Deligne-Mumford stacks and its dual space and the solutions of $bbGKZ(C,0)$ and $bbGKZ(C^{\circ},0)$ respectively in a neighborhood of the limit point. The isomorphism is  given by certain Gamma series $\Gamma$ and $\Gamma^\circ$. The following
Conjecture \ref{7.3} and Conjecture \ref{7.1} were formulated in \cite{BorisovHorja}. They describe the duality between the two systems and confirm the isotrivial family predictions at the level of complexified $K$-groups.
\begin{conjecture}\label{7.3}
\cite{BorisovHorja}
There exists a collection of polynomials $p_{c,d}(x_1,\ldots,x_n)$ indexed by $c\in C, d\in C^{\circ}$ such that
\begin{enumerate}
\item Only a finite number of $p_{c,d}$ are nonzero.
\item For any pair of solutions $(\Phi_c)$ of $bbGKZ(C,0)$ and $(\Psi_d)$ of $bbGKZ(C^{\circ},0)$, the sum
\begin{equation}\label{pcd}
\sum_{c,d}p_{c,d}\Phi_c\Psi_d
\end{equation}
 is constant as a function of $(x_1,\ldots,x_n)$.
\item The pairing given by \eqref{pcd} is non-degenerate.
\item For any projective simplicial subdivision $\Sigma$, the pairing given by \eqref{pcd} is the inverse of the Euler characteristics pairing between $K_0(\mathbb{P}_{\Sigma})$ and $K^c_0(\mathbb{P}_{\Sigma})$ under the $\Gamma$ and $\Gamma^{\circ}$.
\end{enumerate}
\end{conjecture}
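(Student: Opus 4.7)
The plan is to construct the polynomials $p_{c,d}$ by transporting the $K$-theoretic Euler characteristic pairing through the Gamma series isomorphisms, and then verify (1)--(4) in turn. First I would fix any projective simplicial subdivision $\Sigma$ of $C$; such subdivisions exist for every Gorenstein cone built from a lattice polytope. As recalled in the excerpt, the Gamma series yield isomorphisms
\[
\Gamma : K_0(\mathbb{P}_\Sigma)_{\mathbb{C}} \xrightarrow{\sim} \mathrm{Sol}(bbGKZ(C,0)), \qquad \Gamma^\circ : K_0^c(\mathbb{P}_\Sigma)_{\mathbb{C}} \xrightarrow{\sim} \mathrm{Sol}(bbGKZ(C^\circ,0))
\]
in a neighborhood of the corresponding limit point. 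Pulling back the non-degenerate Euler pairing $\chi$ on $K_0 \otimes K_0^c$ along $\Gamma \otimes \Gamma^\circ$ produces a non-degenerate, by construction $x$-independent bilinear pairing $P$ on the solution spaces.

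Having $P$, the next step is to identify it with an expression $\sum_{c,d} p_{c,d}(x)\,\Phi_c(x)\,\Psi_d(x)$. I would work with the natural bases $\{\Phi_c\}_{c}$, $\{\Psi_d\}_{d}$ of Gamma series indexed by lattice points of $C$ and $C^\circ$; expanding $P$ in these bases would read off formal candidate coefficients $p_{c,d}(x)$. The constancy in $x$ of $\sum p_{c,d}\Phi_c\Psi_d=P$ then holds tautologically in a neighborhood of the limit point of $\Sigma$, and analytic continuation combined with the monodromy invariance of $\chi$ (a consequence of the fact that $\chi$ is categorical, i.e. defined at the level of $K$-theory rather than of solutions) promotes this to a global identity, giving (2) and (3).

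The hardest step is (1): showing that the $p_{c,d}(x)$ produced this way are actually polynomials in $x_1,\ldots,x_n$ and that only finitely many are nonzero. My approach would be to write out the Gamma series explicitly as hypergeometric sums indexed by the secondary fan, expand $\chi$ through the Lefschetz--Riemann--Roch/toric orbifold Riemann--Roch formula on $\mathbb{P}_\Sigma$, sum over twisted sectors, and look for cancellation via the Gamma function reflection identity so that the infinite tails telescope into polynomial, finitely supported expressions. This is the pattern expected from the two-dimensional case treated in this paper and from analogous computations for ordinary GKZ systems, but making it work in arbitrary dimension requires delicate control over the twisted-sector contributions and over the zero--pole cancellations of Gamma functions at lattice boundaries.

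Finally, property (4) for the chosen $\Sigma$ holds by the very construction of $P$. Independence of $p_{c,d}$ from the choice of $\Sigma$, which is needed for (4) to hold for \emph{every} projective simplicial subdivision, would follow from the derived equivalence of different crepant stacky resolutions (Bridgeland--King--Reid for toric Deligne--Mumford stacks, or variation of GIT), which intertwines the Gamma series and the Euler pairings on different $\mathbb{P}_\Sigma$. The main obstacle in arbitrary dimension is therefore not (2)--(4), which are largely formal consequences of the setup, but the polynomiality and finiteness statement (1), which in dimension two can be pushed through by explicit series manipulations but in general appears to demand a substantially more conceptual input, possibly a residue or integral representation of the duality pairing that is manifestly supported on a finite set of monomials.
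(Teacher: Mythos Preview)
Your proposal reverses the logic of the paper and, in doing so, skips the actual content of the conjecture. The paper (which only treats $\mathrm{rk}\,N=2$) does not transport the Euler pairing through $\Gamma,\Gamma^\circ$ and then try to recognize the result as $\sum p_{c,d}\Phi_c\Psi_d$. It does the opposite: it \emph{guesses} an explicit finite polynomial expression (Definition~\ref{pairing}), verifies by direct contour-integral calculation that it is constant and non-degenerate on a spanning set of solutions (Theorem~\ref{pairingthm}, Corollary~\ref{nondegen}), independently computes $\chi_H^{-1}$ in closed form (Theorem~\ref{chiH-1}), and then evaluates the pairing on the Gamma series term by term to match the two (Theorem~\ref{mainfor(4)}). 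The ``nice guess'' of $p_{c,d}$ is the starting point, not the output.

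The specific gap in your outline is the step ``expanding $P$ in these bases would read off formal candidate coefficients $p_{c,d}(x)$''. The $\Phi_c$ are not a basis of the solution space; they are the infinitely many components of a single solution $\Phi=(\Phi_c)_{c\in C}$, linked by $\partial_i\Phi_c=\Phi_{c+v_i}$. A constant pairing $P(\Phi,\Psi)$ is a single number, and writing it as $\sum_{c,d} p_{c,d}(x)\Phi_c(x)\Psi_d(x)$ is wildly underdetermined: there is no canonical way to ``read off'' $p_{c,d}$, and most representations will be neither polynomial nor finitely supported. So your construction gives back exactly the object you started with (the Euler pairing), without producing any candidate $p_{c,d}$; parts (2)--(4) then become vacuous and part (1), which you correctly flag as the crux, is untouched. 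The paper's method avoids this by writing down the $p_{c,d}$ first and checking the four properties afterwards; in higher dimensions the missing ingredient is precisely an analogous explicit formula, not the formal transport you describe.
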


\begin{conjecture}\label{7.1}
\cite{BorisovHorja} There are commutative diagrams of isomorphisms as follows

\smallskip

\hskip 86 pt
\xymatrix{
K_0(\mathbb{P}_{\Sigma_2})^{\vee} \ar[d]_{\Gamma} \ar[r]^{pp^{\vee}}
& K_0(\mathbb{P}_{\Sigma_1})^{\vee} \ar[d]_{\Gamma}\\
bbGKZ(C,0) \ar[r]^{a.c.}
& bbGKZ(C,0)
}

\smallskip

\hskip 82 pt
\xymatrix{
K^c_0(\mathbb{P}_{\Sigma_2})^{\vee} \ar[d]_{\Gamma^{\circ}} \ar[r]^{pp^{\vee}}
& K^c_0(\mathbb{P}_{\Sigma_1})^{\vee} \ar[d]_{\Gamma^{\circ}}\\
bbGKZ(C^{\circ},0) \ar[r]^{a.c.}
& bbGKZ(C^{\circ},0)
}

\smallskip\noindent
where the top rows are the duals of the isomorphisms induced by the pullback-pushforward derived functors, and the bottom rows are analytic continuous along a certain path in the domain of parameters considered in \cite{BorisovHorja2}.
\end{conjecture}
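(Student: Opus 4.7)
The plan is to reduce the statement to verifying commutativity for pairs of \emph{adjacent} projective simplicial subdivisions, and then to perform an explicit computation exploiting the combinatorial simplicity of the two-dimensional case. Any two projective simplicial subdivisions $\Sigma_1,\Sigma_2$ of the cone $C$ can be connected in the secondary fan by a sequence of wall-crossings, each corresponding to an elementary bistellar flip that modifies the triangulation locally. Since both $pp^{\vee}$ on the $K$-theory side and the analytic continuation path on the $bbGKZ$ side decompose as compositions along such a sequence, it suffices to prove commutativity of the two diagrams for a single elementary flip.

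For each elementary flip in the two-dimensional setting, both $K_0(\mathbb{P}_{\Sigma_i})$ and $K_0^c(\mathbb{P}_{\Sigma_i})$ admit explicit presentations in terms of line bundles on toric surface Deligne--Mumford stacks, whose singular points are cyclic quotients of the form $\frac{1}{r}(1,a)$ resolved via Hirzebruch--Jung continued fractions. The pullback-pushforward $pp^{\vee}$ is then computed by passing to a common refinement and has a concrete formula on $T$-equivariant line bundles. On the analytic side, one tracks the Gamma series $\Gamma$ and $\Gamma^{\circ}$ around the wall in question: as the wall has codimension one in the secondary fan, the monodromy reduces to a single explicit analytic continuation, which in dimension two is of Gauss hypergeometric type and can be written in closed form.

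To glue these local computations together, I would use Conjecture \ref{7.3} as the main pairing-preservation input: the polynomial pairing \eqref{pcd} is by construction independent of $(x_1,\ldots,x_n)$, hence invariant under any analytic continuation, while pullback-pushforward preserves the Euler characteristic pairing on the $K$-theory side. Combined with the uniqueness of the Gamma series solutions, determined by their leading asymptotics at each limit point, these compatibilities should force the two diagrams to commute. The main obstacle will be the explicit verification of the analytic continuation formula for $\Gamma$ and $\Gamma^{\circ}$ across a single wall: even in dimension two this requires careful branch tracking that reconciles the combinatorial data of the flip (lengths of continued-fraction expansions, orbifold structures at fixed points) with the explicit Pochhammer-type coefficients appearing in the Gamma series.
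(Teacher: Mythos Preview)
Your plan is coherent, but it is considerably more laborious than what the paper actually does, and it overlooks the main shortcut. In the paper, the first diagram (for $bbGKZ(C,0)$) is not verified anew: in the rank-two setting $bbGKZ(C,0)$ coincides with the ordinary GKZ system, and the commutativity of the top square is quoted directly from the earlier work \cite{BorisovHorja2} on Mellin--Barnes integrals and Fourier--Mukai transforms. No explicit wall-by-wall analytic continuation of $\Gamma$ is carried out here. The second diagram is then deduced from the first purely by duality, using the already-proved Conjecture~\ref{7.3}: the pairing $\langle\Gamma,\Gamma^{\circ}\rangle$ is a \emph{constant} (Theorem~\ref{pairingthm}), hence monodromy-invariant under any analytic continuation; by Theorem~\ref{mainfor(4)} it equals $-\tfrac{n}{4\pi^2}\chi_H^{-1}$, while $pp$ preserves the Euler pairing $\chi$. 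Nondegeneracy of the pairing then forces the $\Gamma^{\circ}$-square to commute once the $\Gamma$-square does.

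You do mention the pairing-preservation idea in your last paragraph, but you frame it as a tool to ``glue local computations'' rather than as the entire argument for the second diagram. The practical consequence is that your proposed ``main obstacle''---explicitly tracking the analytic continuation of $\Gamma^{\circ}$ across a wall with branch bookkeeping---is simply unnecessary. Your approach would presumably succeed in dimension two, but it reproves a known case of the first diagram and forgoes the clean duality reduction that Conjecture~\ref{7.3} was designed to enable.
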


In this paper, we prove these two conjectures in the case of $\mathrm{rk \, N}=2$ by an explicit calculation. The key to it is a nice guess of the polynomials $p_{c,d}$.

\medskip
The paper is organized as follows. In Section \ref{bbGKZ}, we  review the better behaved GKZ hypergeometric systems and the solutions to these systems in the form of contour integrals in the case of $\mathrm{rk \, N}=2$. In Section \ref{pairingofsol}, we find a pairing which satisfies $(1),(2),(3)$ in Conjecture \ref{7.3}. In Section \ref{pairingHH^c}, we define a pairing between the cohomology ring and its dual which is compatible with the Euler characteristics pairing between $K_0(\mathbb{P}_{\Sigma})$ and $K^c_0(\mathbb{P}_{\Sigma})$ defined in \cite{BorisovHorja}. Then we compute the inverse of this pairing. Section \ref{pluginGamma} contains the calculation of the pairing defined in Section \ref{pairingofsol} on the Gamma series solutions to the better-behaved GKZ systems. Then we prove part $(4)$ of Conjecture \ref{7.3} and Conjecture \ref{7.1} in the case of $\mathrm{rk \, N}=2$. Section \ref{futuredire} briefly describes further directions of research.

\medskip

\noindent{\it Acknowledgements.} L.B. and C.W. were partially supported by the NSF Grant DMS-1601907.

\section{Better behaved GKZ hypergeometric systems}\label{bbGKZ}
In this section, we give an overview of the so-called better behaved GKZ hypergeometric systems which were defined in \cite{BorisovHorja1}. When the rank of the lattice $N$ in the definition of the systems  is two,  we show that a solution to the system can be given in the form of contour integrals which will be further discussed in Section \ref{pairingofsol}. Then we prove $(1),(2),(3)$ in Conjecture \ref{7.3} in the case of $\mathrm{rk \, N}=2$.

\smallskip

Let $C$ be a finite rational polyhedral cone in a lattice $N=N_1\oplus {\mathbf Z}$ and based on a lattice polytope $(\Delta,1)$ and let $\{v_i\}_{i=0}^n$ be a set of $n+1$ elements of $C$ that includes all vertices of $(\Delta,1)$.

\begin{definition}\label{definebbGKZ}
Let $\{\Phi_c(x_0,\cdots,x_n)\}$ be an infinite collection of $(n+1)$-variable functions which are indexed by lattice points $c\in C$. We consider the system of partial differential equations on these functions as follows:
\begin{equation}\label{GKZ}
    \partial_i\Phi_c=\Phi_{c+v_i},\ \ \ \ \sum_{i=0}^n\mu(v_i)x_i\partial_i\Phi_c+\mu(c)\Phi_c=0
\end{equation}
for all linear functions $\mu\in N^{\vee}$ and $c\in C$. We call this system $bbGKZ(C,0)$. Similarly, we define $bbGKZ(C^{\circ},0)$ by considering lattice points in the interior of $C$. In the following sections we denote the solutions to $bbGKZ(C^{\circ},0)$ by $\Psi_d$.
\end{definition}

\begin{remark}\label{remdimn}
It was shown in \cite{BorisovHorja1} that the dimension of the solution spaces to $bbGKZ(C,0)$ and $bbGKZ(C^{\circ},0)$ is equal to the normalized volume of $C$, provided that $f=\Sigma_{i=0}^{n}x_i[v_i]$ is $\Delta-$nondegenerate in the sense of Batyrev \cite{Batyrev}.
\end{remark}

Now we focus on the special case of $\mathrm{rk \,N}=2$. Let $C$ be the cone with two rays passing through $(0,1)$ and $(n,1)$. Let $v_i=(i,1),0\leq i\leq n$ be the successive lattice points lying on the line of degree one. We now construct solutions to the $bbGKZ$ systems using contour integrals.

\smallskip
Define  the polynomial of one complex variable $z$  by $f(z)=x_0+x_1z+\cdots+x_nz^n$.
It is easy to see that the $\Delta-$nondegeneracy in this case  translates to $x_0\neq0$, $x_n\neq 0$ and roots of $f(z)$ being distinct. Equivalently, $f$ has exactly $n$ distinct nonzero roots which we will denote by $\xi_i$. These roots depend analytically on $(x_i)$.

\smallskip
First, we consider the solutions to $bbGKZ(C,0)$. Note that $\Phi_{(0,0)}=1$ and $\Phi_c=0$ for nonzero lattice point $c\in C$ form a solution to $bbGKZ(C,0)$ which we denote by $\Phi^0$.

\smallskip
To construct the other solutions,  cut $\mathbb{CP}^1$ by any path $S$ from $0$ to $\infty$ which avoids $\xi_i$ and pick a branch of $\log z$. Then the following formulas provide solutions to $bbGKZ(C,0)$ for any (homotopy class of) closed smooth path $\gamma$  in $\mathbb{C}$ disjoint from $\xi_i$ and the cut $S$.
\begin{definition}\label{phi}
We define functions
\begin{equation*}
\begin{split}
    \Phi_{(0,0)}^{\gamma}&=\frac{1}{2\pi \rm i}\oint_{\gamma}\log z\frac{f^{\prime}(z)}{f(z)}dz+\frac{1}{n}c(\gamma)(\log x_n-\log x_0) 
\end{split}
\end{equation*}
where $c(\gamma) = \frac 1{2\pi\rm i} \int_{\gamma}\frac {f'(z)}{f(z)}\,dz$ denotes the winding number of $f(z)$ around $0$ as $z$ moves along $\gamma$. For $c=(k,l)\in C^\circ$ we define
\begin{equation*}
    \Phi_{(k,l)}^{\gamma}=\frac{1}{2\pi \rm i}\oint_{\gamma}\frac{(-1)^l(l-1)!z^k}{f(z)^l}\frac{dz}{z},\ \ \ \text{for all }(k,l)\in C^{\circ}
\end{equation*}
and we define
\begin{equation*}
\begin{split}
    \Phi_{(0,l)}^{\gamma}=\frac{1}{2\pi \rm i}\oint_{\gamma}\frac{(-1)^l(l-1)!}{f(z)^l}\frac{dz}{z}
    +\frac {c(\gamma)}n\frac{(-1)^l(l-1)!}{x_0^l}, \\
    \Phi_{(nl,l)}^{\gamma}=\frac{1}{2\pi \rm i}\oint_{\gamma}\frac{(-1)^l(l-1)!z^{nl}}{f(z)}\frac{dz}{z}
     -\frac {c(\gamma)}n\frac{(-1)^l(l-1)!}{x_n^l}.
\end{split}
\end{equation*}
We collect these functions into $\Phi^{\gamma}=(\Phi_{(k,l)}^{\gamma})_{(k,l)\in C}$. It is defined up to a multiple of $\Phi^0$ which comes from a choice of branches of the logarithm.
\end{definition}

\begin{lemma}
The functions $\Phi^{\gamma}=(\Phi_{(k,l)}^{\gamma})_{(k,l)\in C}$ defined in Definition \ref{phi} form a solution to $bbGKZ(C,0)$.
\end{lemma}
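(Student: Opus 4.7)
The plan is to verify the two defining relations of \eqref{GKZ} by differentiating each integrand of Definition \ref{phi} in $x_i$, interchanging differentiation with $\oint_\gamma$, and reducing every relation to an identity of rational (or log-rational) functions of $z$. Two algebraic facts that follow from $f(z)=\sum_i x_iz^i$ drive everything: $\sum_i x_i\partial_i f=f$ and $\sum_i ix_i\partial_i f=zf'(z)$. These will be combined with the observation that $\oint_\gamma d\omega=0$ whenever $\omega$ is single-valued along $\gamma$, which will apply to $z^k/f^l$, $\log z\cdot z^k/f^l$, and $\log z\cdot zf'/f$ because $\gamma$ avoids both the $\xi_i$ and the cut $S$.

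I would first dispatch the interior case $(k,l)\in C^\circ$, where no correction term is present. Direct differentiation of $(-1)^l(l-1)!z^k/(f^l z)$ in $x_i$ produces the integrand of $\Phi_{(k+i,l+1)}^\gamma$, proving the first relation. For the homogeneity relations, it suffices to test the two generators of $N^\vee$: using $\sum x_i\partial_i f^{-l}=-l f^{-l}$ one obtains $\sum x_i\partial_i \Phi_{(k,l)}^\gamma = -l\Phi_{(k,l)}^\gamma$, while combining $\sum ix_i\partial_i f^{-l}=-lzf'/f^{l+1}$ with the total-derivative identity $\tfrac{d}{dz}(z^k/f^l)=kz^{k-1}/f^l-lz^kf'/f^{l+1}$, integrated over $\gamma$, yields $\sum ix_i\partial_i \Phi_{(k,l)}^\gamma = -k\Phi_{(k,l)}^\gamma$.

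Next, I would handle the three boundary families $(0,0)$, $(0,l)$ with $l\geq 1$, and $(nl,l)$ with $l\geq 1$. In each case the interior-style computation leaves a residue proportional either to $\oint_\gamma f'/f\,dz = 2\pi\mathrm{i}\,c(\gamma)$ or to the values $1/x_0^l$, $1/x_n^l$ produced by $\partial_0 f=1$ and $\partial_n f=z^n$, and the extra $c(\gamma)$-terms in Definition \ref{phi} are exactly calibrated to absorb these residues. For $(0,0)$ for example, $\partial_0$ of the log-integrand is $-\log z\cdot f'/f^2$; integration by parts against the single-valued primitive $\log z\cdot(-1/f)$ on $\gamma$ rewrites the integral as $-\oint dz/(zf)$, which, together with the $-c(\gamma)/(nx_0)$ term coming from $\partial_0\bigl[-\tfrac{c(\gamma)}{n}\log x_0\bigr]$, reproduces $\Phi_{(0,1)}^\gamma$. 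For the scaling relation $\sum ix_i\partial_i \Phi_{(0,0)}^\gamma=0$, integration by parts yields $-\oint f'/f\,dz=-2\pi\mathrm{i}\,c(\gamma)$, which is exactly cancelled by $\sum ix_i\partial_i\bigl[\tfrac{c(\gamma)}{n}(\log x_n-\log x_0)\bigr]=c(\gamma)$. The families $(0,l)$ and $(nl,l)$ are then treated with the same template, the corrections $\tfrac{c(\gamma)}{n}\tfrac{(-1)^l(l-1)!}{x_0^l}$ and $-\tfrac{c(\gamma)}{n}\tfrac{(-1)^l(l-1)!}{x_n^l}$ taking up the role played by $-\tfrac{c(\gamma)}{n}\log x_0$ and $\tfrac{c(\gamma)}{n}\log x_n$.

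The main obstacle is purely bookkeeping in the $(0,0)$ case: one must check carefully that the choice of branch of $\log z$ and the assumption that $\gamma$ avoids the cut $S$ really do make the primitives used in each integration by parts single-valued along $\gamma$, so that their contour integrals vanish. Once this is in place, every PDE reduces to an elementary algebraic identity, and the correction terms in Definition \ref{phi} are exactly what is needed to glue the interior and boundary verifications together.
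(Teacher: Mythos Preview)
Your proposal is correct and follows essentially the same route as the paper: differentiate under the integral, use $\sum_i x_i\partial_i f=f$ and $\sum_i ix_i\partial_i f=zf'$, and integrate by parts against $\log z$ (single-valued on $\gamma$ because $\gamma$ avoids the cut $S$), with the $c(\gamma)$-corrections absorbing the winding-number contributions on the boundary rays.

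The one organizational difference worth noting: the paper observes that once the derivative relations $\partial_i\Phi_c=\Phi_{c+v_i}$ are established for all $c$, the homogeneity relations $\sum_i\mu(v_i)x_i\partial_i\Phi_c+\mu(c)\Phi_c=0$ propagate from $c=(0,0)$ to every $c\in C$ (indeed, applying $\partial_j$ to the homogeneity relation at $c$ yields the one at $c+v_j$). So the paper verifies homogeneity only at the origin. You instead check the homogeneity relations directly at the interior points via the total-derivative identity for $z^k/f^l$, and then again at the boundary families. Both are valid; the paper's reduction saves a little bookkeeping, while your direct verification has the advantage of making the role of the correction terms at $(0,l)$ and $(nl,l)$ explicit in the homogeneity check as well as in the derivative check.
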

\begin{proof}
Let $\delta_{kn}$, $\delta_{k0}$ be the Kronecker symbols.
We have
\begin{equation*}
\begin{split}
\partial_k\Phi_{(0,0)}^{\gamma}&=\frac{1}{2 \pi \rm i} \oint_{\gamma}(\log z)\, \partial_k(\frac{f'(z)}{f(z)})dz+ \frac{c(\gamma)}{n}(\frac{\delta_{kn}}{x_n}-\frac{\delta_{k0}}{x_0})\\
&=\frac{1}{2 \pi \rm i} \oint_{\gamma}(\log z)\, (\partial_k\log(f(z)))'dz+ \frac{c(\gamma)}{n}(\frac{\delta_{kn}}{x_n}-\frac{\delta_{k0}}{x_0})\\
&=-\frac{1}{2 \pi \rm i}\oint_{\gamma}\partial_k(\log(f(z)))\frac{1}{z}dz+\frac{c(\gamma)}{n}(\frac{\delta_{kn}}{x_n}-\frac{\delta_{k0}}{x_0})\\
&=-\frac{1}{2 \pi \rm i}\oint_{\gamma}\frac{z^k}{f(z)}\frac{1}{z}dz+\frac{c(\gamma)}{n}(\frac{\delta_{kn}}{x_n}-\frac{\delta_{k0}}{x_0})=\Phi_{(k,1)}^{\gamma}.\\
\end{split}
\end{equation*}
For $0<k<nl$, we get
\begin{equation*}
\begin{split}
\partial_j\Phi_{(k,l)}^{\gamma}&=\frac{1}{2 \pi \rm i} \oint_{\gamma}\partial_j(\frac{(-1)^l(l-1)!z^k}{f(z)^lz})dz\\
&=\frac{1}{2 \pi \rm i} \oint_{\gamma}\frac{(-l)(-1)^l(l-1)!z^k \partial_j(f(z))}{f(z)^{l+1}z}dz\\
&=\frac{1}{2 \pi \rm i} \oint_{\gamma}\frac{(-1)^{l+1}\,l!\,z^{k+j}}{f(z)^{l+1}z}dz=\Phi^{\gamma}_{(k+j,l+1)}.
\end{split}
\end{equation*}
For $k\in\{0,nl\}$, the calculation is analogous and is left to the reader.

\smallskip

In view of the above, it suffices to verify the torus invariance of $\Phi_{(0,0)}^{\gamma}$, i.e.,
$$\sum_{j=0}^nx_j\partial_j \Phi_{(0,0)}^{\gamma}=0=\sum_{j=0}^{n}jx_j\partial_j \Phi_{(0,0)}^{\gamma}.$$
We have
\begin{equation*}
\begin{split}
\sum_{j=0}^nx_j\partial_j \Phi_{(0,0)}^{\gamma}&=-\frac{1}{2 \pi \rm i} \oint_{\gamma}\frac{\sum_{j=0}^nx_jz^j}{f(z)z}dz+\frac{c(\gamma)}{n}(\frac{x_n}{x_n}-\frac{x_0}{x_0})\\
&=-\frac{1}{2 \pi \rm i} \oint_{\gamma}\frac{1}{z}dz=0
\end{split}
\end{equation*}
because $\gamma$ is disjoint from the cut.
We have
\begin{equation*}
\begin{split}
\sum_{j=0}^njx_j\partial_j \Phi_{(0,0)}^{\gamma}&=-\frac{1}{2 \pi \rm i} \oint_{\gamma}\frac{\sum_{j=0}^njx_jz^{j-1}}{f(z)}dz+\frac{c(\gamma)}{n}(\frac{nx_n}{x_n})\\
&=-\frac{1}{2 \pi \rm i} \oint_{\gamma}\frac{f'(z)}{f(z)}dz+c(\gamma)=0.
\end{split}
\end{equation*}
\end{proof}

\begin{corollary}\label{cor2.5}
For a small loop $\gamma$ around $\xi=\xi_i$ for some $i$, we have a solution of $bbGKZ(C,0)$ with
\begin{equation*}
\Phi_{(0,0)}=\log\xi+\frac{1}{n}\log x_n-\frac{1}{n}\log x_0,
\end{equation*}
\begin{equation*}\label{Phik1}
    \Phi_{(k,1)}=-\frac{\xi^k}{f^{\prime}(\xi)\xi},\ \ 0<k<n,
\end{equation*}
\begin{equation*}\label{Phi01}
    \Phi_{(0,1)}=-\frac{1}{f^{\prime}(\xi)\xi}-\frac{1}{nx_0},\ \ \
    \Phi_{(n,1)}=-\frac{\xi^n}{f^{\prime}(\xi)\xi}+\frac{1}{nx_n}.
\end{equation*}
\end{corollary}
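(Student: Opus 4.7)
The plan is simply to evaluate the contour integrals defining $\Phi^\gamma$ in Definition \ref{phi} by the residue theorem, under the assumption that $\gamma$ is a small loop encircling exactly one simple zero $\xi = \xi_i$ of $f$ (and no pole or branch point).

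First I would compute the winding number $c(\gamma)$. Since $f$ has only simple zeros $\xi_1,\ldots,\xi_n$, the meromorphic form $\frac{f'(z)}{f(z)}\,dz$ has residue $1$ at $\xi$, so $c(\gamma)=1$. Next, for $0<k<n$, the integrand $\frac{-z^{k-1}}{f(z)}$ in the expression for $\Phi_{(k,1)}^\gamma$ has only one singularity inside $\gamma$, a simple pole at $\xi$ with residue $\frac{\xi^{k-1}}{f'(\xi)}$; this gives the stated formula $\Phi_{(k,1)}=-\xi^{k}/(f'(\xi)\xi)$. The endpoint cases $k=0$ and $k=n$ are identical except that the explicit boundary correction terms $-\frac{c(\gamma)}{n}\cdot\frac{-1}{x_0}$ and $-\frac{c(\gamma)}{n}\cdot\frac{-1}{x_n}$ from Definition \ref{phi} must be added, which with $c(\gamma)=1$ produce exactly the $\mp\frac{1}{nx_0}$ and $\pm\frac{1}{nx_n}$ tails in the statement.

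For $\Phi_{(0,0)}^\gamma$, the integrand $\log z\cdot\frac{f'(z)}{f(z)}$ is holomorphic on a punctured neighborhood of $\xi$ (note $\xi\neq 0$, so $\log z$ extends holomorphically past $\xi$ once a branch has been fixed by the cut $S$, which one may assume $\gamma$ avoids). Hence the residue at $\xi$ is $\log\xi\cdot\operatorname{Res}_{z=\xi}\frac{f'(z)}{f(z)}=\log\xi$, and adding the constant correction $\frac{c(\gamma)}{n}(\log x_n-\log x_0)=\frac{1}{n}(\log x_n-\log x_0)$ yields the stated value.

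There is no real obstacle: the whole corollary is a direct residue computation, and the only point requiring a touch of care is that the winding number $c(\gamma)$ equals $1$ because $\xi$ is a \emph{simple} root of $f$, which relies on the $\Delta$-nondegeneracy assumption already in force. Sign bookkeeping and the treatment of the two boundary cases $k=0,n$ (where the extra correction terms play their role) complete the proof.
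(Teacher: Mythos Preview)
Your proposal is correct and is precisely the approach the paper takes: its proof consists of the single sentence ``Follows from Cauchy's integral formula,'' and your residue computation is exactly what that sentence unpacks to. One small slip to clean up: the correction term for $\Phi_{(0,1)}$ in Definition~\ref{phi} is $+\tfrac{c(\gamma)}{n}\cdot\tfrac{-1}{x_0}$, not $-\tfrac{c(\gamma)}{n}\cdot\tfrac{-1}{x_0}$ as you wrote, though your stated final value $-\tfrac{1}{nx_0}$ is correct.
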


\begin{proof}
Follows from Cauchy's integral formula.
\end{proof}

\begin{remark}
The $bbGKZ(C,0)$ system in this case is just the usual $GKZ$ system for $v_0=(0,1),\ldots,v_n=(0,n)$. So the above formulas must be well-known, but we didn't find them in the literature. The $bbGKZ(C^{\circ},0)$ statements below are likely new, since this system is not as well-studied.
\end{remark}

Now we consider the solution to $bbGKZ(C^{\circ},0)$. It will be convenient for our future calculations to use a different dummy variable $w$. Let $f(w)=x_0+x_1w+\cdots+x_nw^n$ be a polynomial of complex variable $w$. We consider (smooth) contours $\lambda\in \mathbb{CP}^1\setminus \{0,\infty,\xi_1,\ldots,\xi_n\}$ whose boundary may contain $0, \infty$, see $(1)$ of Figure \ref{fig:1}. Let $\lambda_0$ be the path from $0$ to $\infty$ along the negative real axis, perturbed to avoid any negative real $\xi_i$, see $(2)$ of Figure \ref{fig:1}. Let $\lambda_i=\gamma_i, i=1,\ldots,n$ which are small circles centered at the roots $\xi_i$ of $f(w)$.

\begin{figure}[H]
  \includegraphics[width=1.0\textwidth]{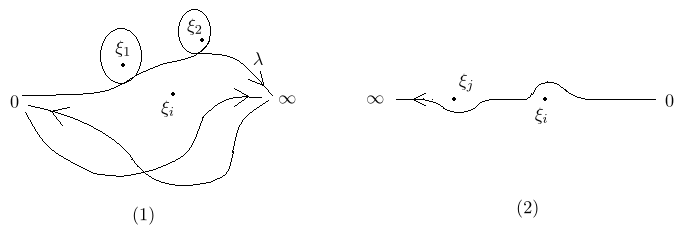}
  \caption{}
  \label{fig:1}
\end{figure}

\begin{definition}\label{psi}
We define functions
\begin{equation*}
    \Psi_{(k,l)}=\frac{1}{2\pi \rm i}\int_{\lambda}\frac{w^k(-1)^l(l-1)!}{f(w)^l}\frac{dw}{w},\ \ \ \ \text{for all }(k,l)\in C^{\circ}.
\end{equation*}
\end{definition}
\begin{lemma}
The functions defined in Definition \ref{psi} form a solution to $bbGKZ(C^{\circ},0)$.
\end{lemma}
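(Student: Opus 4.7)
The proof should closely mirror the argument given for the lemma about $\Phi^\gamma$, since the integrands for $\Psi_{(k,l)}$ have exactly the same form as those for the interior-$C$ part of $\Phi^\gamma_{(k,l)}$; the only substantive difference is the appearance of the non-closed contour $\lambda_0$ among the allowed contours.

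The plan is to check directly the two families of relations in Definition \ref{definebbGKZ}. First, for the identity $\partial_j \Psi_{(k,l)} = \Psi_{(k,l)+v_j} = \Psi_{(k+j,l+1)}$, I would differentiate under the integral sign. Since the integrand depends rationally on $x_0,\ldots,x_n$ with $f(w)$ in the denominator, one computes
\[
\partial_j\!\left(\frac{w^k(-1)^l(l-1)!}{f(w)^l\,w}\right)
=\frac{(-1)^{l+1}l!\,w^{k+j}}{f(w)^{l+1}\,w},
\]
which is exactly the integrand of $\Psi_{(k+j,l+1)}$; this works identically for all admissible contours $\lambda$ because differentiating in $x_j$ does not move the contour past any root of $f$.

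Second, I need to verify the torus-invariance equations. It suffices to check them on the basis $\mu_2=(0,1)$ and $\mu_1=(1,0)$ of $N^\vee$. For $\mu_2$, one has $\mu_2(v_i)=1$ and $\mu_2(k,l)=l$, so the relation is
\[
\sum_{i=0}^n x_i\Psi_{(k+i,l+1)} + l\,\Psi_{(k,l)} = 0.
\]
Pulling everything under one integral, the combined integrand is
\[
\frac{(-1)^{l+1}l!\,w^k f(w)}{f(w)^{l+1}\,w} + \frac{l\,(-1)^l(l-1)!\,w^k}{f(w)^l\,w},
\]
which is identically zero — this is purely algebraic and contour-independent.

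For $\mu_1=(1,0)$, with $\mu_1(v_i)=i$ and $\mu_1(k,l)=k$, the relation reads
\[
\sum_{i=0}^n i\,x_i\Psi_{(k+i,l+1)} + k\,\Psi_{(k,l)} = 0.
\]
The key observation is that the combined integrand is a total derivative:
\[
\frac{(-1)^{l+1}l!\,w^k f'(w)}{f(w)^{l+1}} + \frac{k\,(-1)^l(l-1)!\,w^{k-1}}{f(w)^l}
= \frac{d}{dw}\!\left(\frac{(-1)^l(l-1)!\,w^k}{f(w)^l}\right).
\]
So the integral equals the change of the antiderivative along $\lambda$. For the closed contours $\lambda_i$ ($i\geq 1$) this is zero for free. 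The main point — and the only place where something needs care beyond what was done for $\Phi^\gamma$ — is the open contour $\lambda_0$ running from $0$ to $\infty$: we need the boundary values of $w^k/f(w)^l$ to vanish at both endpoints. This is exactly where the hypothesis $(k,l)\in C^\circ$ enters: it means $0<k<nl$, so $w^k/f(w)^l\to 0$ as $w\to 0$ and as $w\to\infty$ (using $\deg f=n$ and $x_0,x_n\neq 0$). Thus the antiderivative vanishes at both endpoints and the integral is zero, completing the verification. The only mild subtlety is checking that on $\lambda_0$ the antiderivative is single-valued and that the perturbation around any negative real roots does not contribute (it does not, since the antiderivative is meromorphic on $\mathbb{C}\setminus\{\xi_i\}$ and the small arcs cancel).
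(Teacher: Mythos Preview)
Your proof is correct and follows essentially the same approach as the paper: verify $\partial_j\Psi_{(k,l)}=\Psi_{(k+j,l+1)}$ by differentiating under the integral, show the $\mu_2$-relation holds because the integrand is identically zero, and show the $\mu_1$-relation holds because the integrand is the total derivative of $(-1)^l(l-1)!\,w^k/f(w)^l$, whose boundary values at $0$ and $\infty$ vanish precisely because $0<k<nl$. Your version in fact gets the factorial right in the antiderivative (the paper has a harmless typo of $l!$ for $(l-1)!$ there), and your remark about the perturbation of $\lambda_0$ around negative real roots is a nice extra bit of care the paper leaves implicit.
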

\begin{proof}
Convergence at $0$ and $\infty$ follows from $(k,l)\in C^{\circ}$, i.e., $0 < k < nl$.
It's easy to verify the first equation in \eqref{GKZ}. To show the second equation in \eqref{GKZ}, we have
\begin{equation*}
\begin{split}
    \sum_{j}x_j\partial_j\Psi_{(k,l)}+l\Psi_{(k,l)}&=\frac{1}{2\pi \rm i}\int_{\lambda}(\frac{\sum_jx_jw^{k+j}(-1)^{l+1}l!}{f(w)^{l+1}}+\frac{l!(-1)^l w^k}{f(w)^l})\frac{dw}{w}\\
    &=\frac{1}{2\pi \rm i}\int_{\lambda}0 \frac{dw}{w}=0
    \end{split}
\end{equation*}
and
\begin{equation*}
    \sum_{j}j x_j\partial_j\Psi_{(k,l)}+k\Psi_{(k,l)}=\frac{1}{2\pi \rm i}\int_{\lambda}(\frac{\sum_j j x_j w^{k+j}(-1)^{l+1}l!}{f(w)^{l+1}}+\frac{l!(-1)^l k w^k}{f(w)^l})\frac{dw}{w}=0
\end{equation*}
due to
\begin{equation*}
    \frac{\sum_j j x_j w^{k+j}(-1)^{l+1}l!}{f(w)^{l+1}}+\frac{l!(-1)^l k w^k}{f(w)^l}=\frac{d}{dw}\big(\frac{l!(-1)^l w^k}{f(w)^l}\big)
\end{equation*}
and $\frac{l!(-1)^l w^k}{f(w)^l} \rightarrow 0$ as $w \rightarrow 0, \infty$.
\end{proof}

We denote these solutions by $\Psi^{\lambda}=(\Psi^{\lambda}_{(k,l)})_{(k,l)\in C^{\circ}}$. We consider the solutions $\Psi^{\lambda_0}$ and $\Psi^{\lambda_1},\ldots,\Psi^{\lambda_n}$, where $\lambda_i=\gamma_i$ for $i=1,\ldots,n$.

\begin{remark}
In Section \ref{pairingofsol}, we will show that the solutions $\{\Phi^0,\Phi^{\gamma_1},\ldots,\Phi^{\gamma_n}\}$ and $\{\Psi^{\lambda_0},\Psi^{\gamma_1},\ldots,\Psi^{\gamma_n}\}$ constructed in Definition \ref{phi} and Definition \ref{psi} generate the space of solutions to $bbGKZ(C,0)$ and $bbGKZ(C^{\circ},0)$ respectively. They satisfy relations $\Phi^{\gamma_1}+\ldots+\Phi^{\gamma_n}=\alpha \Phi^0$ and $\Psi^{\gamma_1}+\ldots+\Psi^{\gamma_n}=0$. These constructions will be used to prove our main result in Section \ref{pairingofsol}.
\end{remark}

\section{Pairing of solutions in the case of $\mathrm{rk \,N}=2$}\label{pairingofsol}
In this section, we define a non-degenerate pairing between the solutions of $bbGKZ(C,0)$ and $bbGKZ(C^{\circ},0)$ for $\mathrm{rk \, N}=2$ which satisfies $(1),(2)$ in Conjecture \ref{7.3}.
\smallskip

Let $C$ be a cone in $N=\mathbb{Z}^2$ with two rays passing through $(0,1)$ and $(n,1)$. Let
\begin{equation*}
    v_0=(0,1),\ v_1=(1,1),\cdots,\ v_n=(n,1)
\end{equation*}be the successive lattice points lying on the line of degree one.
For $i<j$, we denote the cone generated by $v_i$ and $v_j$ by $\sigma_{ij}$. We define the following pairing.
\begin{definition}\label{pairing}
For any pair of solutions $\Phi=(\Phi_c)$ and $\Psi=(\Psi_d)$ of $bbGKZ(C,0)$ and $bbGKZ(C^{\circ},0)$ respectively, we define a pairing
\begin{equation}
\begin{split}
    \langle\Phi,\Psi\rangle=&\Phi_{(0,0)}\sum_{0\leq i<j\leq n}(j-i)^2x_ix_j\Psi_{v_i+v_j}\\
    &-\sum_{0\leq i<j\leq n}n(j-i)x_ix_j\sum_{\substack{c+d=v_i+v_j \\ c,d\in\sigma_{ij} \\ \deg{c}=1}}\delta_{ij}^{c}\Phi_c\Psi_d,
\end{split}
\end{equation}
where $\delta_{ij}^c=1$ if $c$ lies in the interior of the cone $\sigma_{ij}$ and $\delta_{ij}^c=\frac{1}{2}$ otherwise. In the second summation, we only consider the terms with $d$ in $C^{\circ}$.
\end{definition}

As defined, $\langle\Phi,\Psi\rangle$ is a function of $x_0,\ldots,x_n$. The main result of this section is the following.

\begin{theorem}\label{pairingthm}
For any pair of solutions $(\Phi_c)$ and $(\Psi_d)$ of $bbGKZ(C,0)$ and $bbGKZ(C^{\circ},0)$ respectively, the pairing defined in Definition \ref{pairing} is a constant.
\end{theorem}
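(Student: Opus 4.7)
The plan is to verify $\partial_k\langle\Phi,\Psi\rangle=0$ for every $k\in\{0,1,\ldots,n\}$ by a direct computation, using the GKZ recurrences $\partial_k\Phi_c=\Phi_{c+v_k}$, $\partial_k\Psi_d=\Psi_{d+v_k}$ together with the two torus-invariance equations (for $\mu$ equal to the degree map and to the first-coordinate map)
$$\sum_j x_j\Phi_{c+v_j}=-\deg(c)\Phi_c,\qquad\sum_j j\,x_j\Phi_{c+v_j}=-c_1\Phi_c,$$
and their analogues for $\Psi$.

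Writing the pairing as $\langle\Phi,\Psi\rangle=\Phi_{(0,0)}\,A-B$ where $A=\sum_{i<j}(j-i)^2x_ix_j\Psi_{v_i+v_j}$ and $B$ is the second double sum, the Leibniz rule gives
$$\partial_k\langle\Phi,\Psi\rangle=\Phi_{v_k}\,A+\Phi_{(0,0)}\,\partial_k A-\partial_k B.$$
The GKZ recurrences convert each derivative inside $\partial_k A$ and $\partial_k B$ into an index shift, so the right-hand side becomes an explicit finite sum of monomials $p(x)\,\Phi_c\Psi_d$ with total bidegree $\deg c+\deg d\le 3$. Organizing by $(\deg c,\deg d)$ and applying the torus-invariance relations, I would collapse every $\Psi$-term of degree $3$ back to degree $2$ (via $\sum_j x_j\Psi_{d+v_j}=-2\Psi_d$ applied to $d=v_i+v_j$) and every $\Phi$-term of degree $2$ back to degree $\le 1$. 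After this collapse, the expression is a finite formal sum $\sum p_{c,d}(x)\Phi_c\Psi_d$ with $\deg c+\deg d\le 2$; since these $\Phi_c\Psi_d$ are not forced to vanish by any further GKZ relation, it suffices to check that each coefficient $p_{c,d}(x)$ is identically zero, which reduces to elementary polynomial identities in the $x_j$'s.

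The main obstacle is the boundary bookkeeping. The half-weight $\delta_{ij}^c=\tfrac12$ assigned to $c\in\{v_i,v_j\}$, and the exclusion of $d\in\{v_0,v_n\}$ from the second sum (the $d\in C^\circ$ condition), must combine with the $\Phi_{v_k}\cdot A$ contribution — coming from differentiating $\Phi_{(0,0)}$ — in exactly the right way for the coefficients to vanish. These boundary features are precisely what the specific weights $(j-i)^2$ and $n(j-i)$ in Definition~\ref{pairing} are engineered to balance. The delicate part of the case analysis is at the extremal pairs $(0,j)$, $(i,n)$, and $(0,n)$, where boundary effects actually bite; the interior pairs $(i,j)$ with $0<i<j<n$ contribute no such subtleties and account for the bulk of the terms by a uniform argument.
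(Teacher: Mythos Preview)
Your approach is genuinely different from the paper's. The paper does not differentiate the pairing at all; instead it evaluates $\langle\Phi,\Psi\rangle$ explicitly on the contour-integral solutions $\Phi^{\gamma_k}$, $\Psi^{\lambda_l}$ (and $\Phi^0$) constructed in Section~\ref{bbGKZ}, reducing each pairing to a residue or line-integral computation that visibly yields a constant (for instance $n\delta_{kl}-1$ when $l\neq 0$). The extension to arbitrary solutions then follows by bilinearity once these explicit solutions are shown to span, which is done in Corollary~\ref{nondegen}.

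Your route via $\partial_k\langle\Phi,\Psi\rangle=0$ is the more structural one: it works uniformly for all solutions without constructing an explicit basis, and if completed it would make transparent \emph{why} the specific weights $(j-i)^2$, $n(j-i)$, and the half-weights $\delta^c_{ij}$ are exactly what is needed. The paper's route, by contrast, produces the actual values of the pairing matrix as a byproduct, which it immediately reuses to prove non-degeneracy. Your outline is sound, but the boundary bookkeeping you correctly flag --- the $\delta^c_{ij}=\tfrac12$ edges, the $d\in C^\circ$ exclusion at $v_0,v_n$, and the interaction with the $\Phi_{v_k}A$ term --- is where the entire content of the computation lives, and you have not actually carried it out; the paper sidesteps all of this by absorbing the combinatorics into contour-integral identities such as $\sum_{i<j}(j-i)^2x_ix_jw^{i+j}=\tfrac12\bigl(f(w)(w\partial_w)^2f(w)-(w\partial_wf(w))^2\bigr)$.
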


\begin{proof}
We will calculate the pairing \eqref{pairing} for the solutions $\{\Phi^0,\Phi^{\gamma_1},\ldots,\Phi^{\gamma_n}\}$ and $\{\Psi^{\lambda_0},\Psi^{\lambda_1},\ldots,\Psi^{\lambda_n}\}$ constructed in Definition \ref{phi} and Definition \ref{psi} respectively, where $\lambda_l=\gamma_l$ for $l=1,\ldots,n$. We check by substituting the solutions into \eqref{pairing}.

\smallskip

We first calculate the terms with $\Phi_{(0,0)}^{\gamma_k}$. We have
\begin{equation*}
    \sum_{0\leq i<j\leq n}(j-i)^2x_ix_j\Psi_{v_i+v_j}^{\lambda_l}=\frac{1}{2\pi \rm i}\int_{\lambda}\sum_{0\leq i\leq j\leq n}\frac{(j-i)^2x_ix_jw^{i+j}}{f(w)^2}\frac{dw}{w}.
\end{equation*}
We note the summation
\begin{equation*}
    \sum_{0\leq i\leq j\leq n}(j-i)^2x_ix_jw^{i+j}=\frac{1}{2\pi \rm i}\big(f(w)(w\partial_w)^2f(w)-(w\partial_wf(w))^2\big).
\end{equation*}
Hence we have
\begin{equation*}
\begin{split}
    \sum_{0\leq i<j\leq n}(j-i)^2x_ix_j\Psi_{v_i+v_j}^{\lambda_l}&=\frac{1}{2\pi \rm i}\int_{\lambda}\frac{f(w)(w\partial_w)^2f(w)-(w\partial_wf(w))^2}{f(w)^2}\frac{dw}{w}\\
    &=\frac{1}{2\pi \rm i}\int_{\lambda}\frac{d}{dw}(\frac{w\partial_{w}f(w)}{f(w)})dw.
\end{split}
\end{equation*}
If $\lambda_l=\gamma_l$ for $l=1,\ldots,n$, then this integral is $0$. If $\lambda_l=\lambda_0$ is a path from $0$ to $\infty$, then this integral equals $$\frac{1}{2\pi \rm i}\big(\lim_{w\rightarrow\infty}\frac{wf^{\prime}(w)}{f(w)}-\lim_{w\rightarrow 0}\frac{wf^{\prime}(w)}{f(w)}\big)=\frac{1}{2\pi \rm i}(n-0)=\frac{n}{2\pi \rm i}.$$
So by Corollary \ref{cor2.5} the terms with $\Phi_{(0,0)}^{\gamma_k}$ are
\begin{equation}\label{phi00term}
    \Phi_{(0,0)}^{\gamma_k}\sum_{0\leq i<j\leq n}(j-i)^2x_ix_j\Psi^{\lambda_l}_{v_i+v_j}=\frac{1}{2\pi \rm i}(n\log\xi+\log x_n-\log x_0)\delta_{l}^{0},
\end{equation}
where $\delta_{l}^{0}=1$ if $l=0$ and $\delta_{l}^{0}=0$ if $l\neq 0$.

\smallskip
Then we calculate the other terms. To simplify the calculation, we first ignore the extra terms $\frac{1}{nx_0}$ and $\frac{1}{nx_n}$ in the formula for $\Phi_{(0,1)}^{\gamma_k}$ and $\Phi_{(n,1)}^{\gamma_k}$ in Corollary \ref{cor2.5} and divide the summation into four parts:
\begin{equation*}
    -\sum_{0\leq i<j\leq n}n(j-i)x_ix_j\sum_{\substack{c+d=v_i+v_j \\ c,d\in\sigma_{ij} \\ \deg{c}=1}}\delta_{ij}^{c}\Phi^{\gamma_k}_c\Psi^{\lambda_l}_d=(\frac{1}{2\pi \rm i})^2\int_{\gamma}\frac{(G_1+G_2+G_3+G_4)\,dw}{f(w)wf^{\prime}(\xi)\xi}
\end{equation*}
where $G_1$ has terms with $i\not=0,j\not=n$, $G_2$ has terms with $i\not=0,j=n$, $G_3$ has terms with $i=0,j\not=n$, $G_4$ has terms with $i=0,j=n$. We calculate them separately as follows:
\begin{equation}\label{otherterms-1}
\begin{split}
    G_1=&-n\sum_{0<i<j<n}(j-i)x_ix_j(\sum_{i<c<j}\xi^c w^{i+j-c}+\frac{1}{2}\xi^i w^{j}+\frac{1}{2}\xi^j w^{i})\\
    =&-n\sum_{0<i<j<n}(j-i)x_ix_j\frac{1}{2}\frac{(w+\xi)}{(w-\xi)}(\xi^i w^j-\xi^j w^i)\\
    =&-\frac{n(w+\xi)}{2(w-\xi)}\frac{1}{2}\sum_{0<i,j<n}(j-i)x_ix_j(\xi^i w^j-\xi^j w^i)\\
    =&-\frac{n(w+\xi)}{2(w-\xi)}\sum_{0<i,j<n}(j x_i x_j \xi^i w^j-i x_i x_j \xi^i w^j)\\
    =&-\frac{n(w+\xi)}{2(w-\xi)}\big((-x_0-x_n\xi^n)(wf^{\prime}(w)-nx_nw^n)\\
    &\qquad\qquad\qquad\qquad -(\xi f^{\prime}(\xi)-nx_n\xi^n)(f(w)-x_nw^n-x_0)\big).\\
\end{split}
\end{equation}
Similarly, we get
\begin{equation}\label{otherterms-2}
\begin{split}
    G_2=&-\frac{n(w+\xi)}{2(w-\xi)}x_n(-\xi f^{\prime}(\xi)w^n-n\xi^n f(w)+\xi^n wf^{\prime}(w))\\
    &+\frac{n^2}{2}x_0x_n\frac{w+\xi}{w-\xi}(w^n-\xi^n)+\frac{nx_nw^n}{2}(-\xi f^{\prime}(\xi)-nx_0),\\
    G_3=&-\frac{n(w+\xi)}{2(w-\xi)}x_0(wf^{\prime}(w)-\xi f^{\prime}(\xi))+\frac{n^2(w+\xi)}{2(w-\xi)}x_0x_n(w^n-\xi^n)\\
    &+\frac{n}{2}x_0\xi f^{\prime}(\xi)-\frac{n^2}{2}\xi^nx_0x_n,\\
    G_4=&-n^2x_0x_n\frac{w+\xi}{2(w-\xi)}(w^n-\xi^n)+\frac{n^2}{2}x_0x_n(\xi^n+w^n).
\end{split}
\end{equation}

The contributions of the extra terms of $\Phi^{\gamma_k}_{(0,1)}$ and $\Phi^{\gamma_k}_{(n,1)}$ can be calculated as follows:
\begin{equation}\label{extraterm}
\begin{split}
    &-\xi f^{\prime}(\xi)\frac{1}{2}\sum_{0<j<n}jx_jw^j+\frac{1}{2}\xi f^{\prime}(\xi)\sum_{0<i<n}(n-i)x_iw^i\\
    =&-\frac{1}{2}\xi f^{\prime}(\xi)(2wf^{\prime}(w)-nf(w)-nx_nw^n+nx_0).
\end{split}
\end{equation}
Now we add up \eqref{phi00term}, \eqref{otherterms-1}, \eqref{otherterms-2}, and \eqref{extraterm}. Most of the terms cancel to give
\begin{equation}\label{pairkl}
\begin{split}
    \langle\Phi^{\gamma_k},\Psi^{\lambda_l}\rangle&=\ \frac{1}{2\pi \rm i}(n\log\xi+\log x_n-\log x_0)\delta_{l}^{0}\\
    &+\frac{1}{2\pi \rm i}\int_{\lambda}(\frac{n(w+\xi)}{2(w-\xi)}f(w)-wf^{\prime}(w)+\frac{n}{2}f(w))\frac{dw}{wf(w)}\\
    &=\ \frac{1}{2\pi \rm i}(n\log\xi+\log x_n-\log x_0)\delta_{l}^{0}\\
    &+\frac{1}{2\pi \rm i}\int_{\lambda}(-\frac{f^{\prime}(w)}{f(w)}+\frac{n(w+\xi)}{2(w-\xi)w}+\frac{n}{2w})dw.\\
 \end{split}
\end{equation}
If $\lambda_l=\gamma_l$ for $l=1,\ldots,n$, the pairing \eqref{pairkl} equals
\begin{equation*}
0+\frac{1}{2\pi \rm i}\int_{\lambda}(-\frac{f^{\prime}(w)}{f(w)}+\frac{n}{w-\xi})dw=n\delta_{kl}-1.
\end{equation*}
If $\lambda_l=\lambda_0$, the pairing \eqref{pairkl} equals
\begin{equation*}
\begin{split}
&\ \frac{1}{2\pi \rm i}(n\log\xi+\log x_n-\log x_0)+\frac{1}{2\pi \rm i}\int_{\lambda}(-\frac{f^{\prime}(w)}{f(w)}+\frac{n}{w-\xi})dw\\
    &=\ \frac{1}{2\pi \rm i}(n\log\xi+\log x_n-\log x_0)+\frac{1}{2\pi \rm i}(\mathrm{Log}\frac{(w-\xi)^n}{f(w)})\Big|_{0}^{\infty}\\
    &=\ \frac{1}{2\pi \rm i}(n\log\xi+\log x_n-\log x_0)+\frac{1}{2\pi \rm i}\big(\mathrm{Log}(\frac{1}{x_n})-(\mathrm{Log}(-\xi)^n-\mathrm{Log}x_0)\big)\\
    &=\ constant.
\end{split}
\end{equation*}

\end{proof}

\begin{corollary}\label{nondegen}
The pairing given in Definition \ref{pairing} is non-degenerate.
\end{corollary}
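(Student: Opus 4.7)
The plan is to form an $n\times n$ Gram matrix of the pairing on suitable $n$-tuples of solutions and show that its determinant is nonzero. By Remark~\ref{remdimn}, both solution spaces have dimension $n$. I would work with the $n$-tuples $\{\Phi^0, \Phi^{\gamma_1}, \ldots, \Phi^{\gamma_{n-1}}\}$ and $\{\Psi^{\lambda_0}, \Psi^{\gamma_1}, \ldots, \Psi^{\gamma_{n-1}}\}$. Non-degeneracy of this Gram matrix will simultaneously force each tuple to be linearly independent, hence a basis of the corresponding solution space (by the dimension count), and yield the non-degeneracy of the pairing.

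Next I would read off the Gram matrix entries from computations already carried out in the proof of Theorem~\ref{pairingthm}. Two observations are essential. First, since $\Phi^0_{(0,0)} = 1$ and $\Phi^0_c = 0$ otherwise, while the second double sum in Definition~\ref{pairing} only involves $c$ of degree one, one has $\langle \Phi^0, \Psi^{\lambda}\rangle = \sum_{i<j}(j-i)^2 x_i x_j \Psi^{\lambda}_{v_i+v_j}$. The contour computation inside the proof of Theorem~\ref{pairingthm} already shows this quantity equals $0$ for $\lambda = \gamma_l$ and $n/(2\pi \mathrm{i})$ for $\lambda = \lambda_0$. Second, the same proof gives $\langle \Phi^{\gamma_k}, \Psi^{\gamma_l}\rangle = n\delta_{kl} - 1$. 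Together these observations put the Gram matrix in the block lower-triangular form
\[
\begin{pmatrix} n/(2\pi \mathrm{i}) & 0 \\ * & nI_{n-1} - J_{n-1} \end{pmatrix},
\]
where $J_{n-1}$ is the $(n-1)\times(n-1)$ all-ones matrix and the column $*$ consists of the logarithmic constants $\langle \Phi^{\gamma_k}, \Psi^{\lambda_0}\rangle$, whose explicit values turn out to be irrelevant.

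The concluding step is a short spectral calculation: $J_{n-1}$ has eigenvalues $n-1$ (once) and $0$ (multiplicity $n-2$), so $nI_{n-1} - J_{n-1}$ has spectrum $\{1, n, \ldots, n\}$ and determinant $n^{n-2}$, giving total Gram determinant $n^{n-1}/(2\pi \mathrm{i}) \neq 0$. The step requiring the most care is verifying the vanishings $\langle \Phi^0, \Psi^{\gamma_l}\rangle = 0$ that yield the block-triangular structure; without them one would have to manipulate the messy logarithmic entries in the first column directly, whereas with them in place the argument collapses to a purely combinatorial eigenvalue computation that works uniformly in $n$.
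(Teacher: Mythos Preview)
Your proof is correct and follows essentially the same approach as the paper: both exploit the vanishings $\langle \Phi^0,\Psi^{\gamma_l}\rangle=0$ to obtain a block-triangular Gram matrix whose lower-right block is $nI-J$, and both read off the needed pairings from the computations in Theorem~\ref{pairingthm}. The only cosmetic difference is that the paper works with the full $(n+1)$-tuples $\{\Phi^0,\Phi^{\gamma_1},\ldots,\Phi^{\gamma_n}\}$ and $\{\Psi^{\lambda_0},\Psi^{\gamma_1},\ldots,\Psi^{\gamma_n}\}$ and argues that the resulting $(n{+}1)\times(n{+}1)$ matrix has rank $n$, whereas you drop one redundant solution on each side and compute the $n\times n$ determinant directly; your version is arguably tidier since it avoids the rank argument.
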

\begin{proof}
We calculate the pairing between two collections of solutions $$\{\Phi^0,\Phi^{\gamma_1},\ldots,\Phi^{\gamma_n}\}, \{\Psi^{\lambda_0},\Psi^{\lambda_1},\ldots,\Psi^{\lambda_n}\}$$ constructed in Definition \ref{phi} and Definition \ref{psi}. It is easy to check $\langle \Phi^0, \Psi^{\lambda_0}\rangle=\frac{n}{2\pi \rm i}$ and $\langle \Phi^0, \Psi^{\lambda_l}\rangle=0$ for $l=1,\ldots,n$. Then the matrix $P$ whose entries are the pairings between the two collections of solutions is given by
$$P=\begin{pmatrix}
   \frac{n}{2\pi \rm i} & 0 & 0 & 0 & \ldots & 0  \\
   * & n-1 & -1 & -1 & \ldots & -1\\
   * & -1 & n-1 & -1 & \ldots & -1 \\
   \vdots & \vdots & \vdots & \vdots & \ldots &\vdots \\
   * & -1 & -1 & -1 & \ldots &  n-1\\
\end{pmatrix}.$$
This matrix $P$ has rank $n$ since the $n\times n$ submatrix at lower right corner has rank $n-1$. Therefore, the pairing is non-degenerate and the dimension of the solution subspace spanned by
$\{\Phi^0,\Phi^{\gamma_1},\ldots,\Phi^{\gamma_n}\}$ is $n$ which equals the dimension of the solution space of $bbGKZ(C,0)$ by Remark \ref{remdimn}. Thus $\{\Phi^0,\Phi^{\gamma_1},\ldots,\Phi^{\gamma_n}\}$ generate the solution space of $bbGKZ(C,0)$. Similarly, $\{\Psi^{\lambda_0},\Psi^{\lambda_1},\ldots,\Psi^{\lambda_n}\}$ generate the solution space of $bbGKZ(C^{\circ},0)$.
\end{proof}

\begin{remark}
Theorem \ref{pairingthm} and Corollary \ref{nondegen} is a special case of the Conjecture 7.3 in \cite{BorisovHorja} when the dimension is two and with no gaps among $\{v_i\}$, i.e, $N=\mathbb{Z}^2$ and $\{v_i\}$ are successive lattice points lying on the line of degree one. For the case when there are some gaps, we set $x_i$ in \eqref{pairing} to be zero if there is a gap at $(i,1)$. The proof goes right through since the polynomial $f$ just misses some terms. Hence Theorem \ref{pairingthm} and Corollary \ref{nondegen} are valid for all cases of dimension two.
\end{remark}

\section{Pairing between $H$ and $H^c$}\label{pairingHH^c}
In this section, we define a pairing $\chi_{H}$ between the spaces  $H$ and $H^c$ defined in \cite{BorisovHorja} which is compatible with the pairing between $K_0(\mathbb{P}_{\Sigma})$ and $K^c_0(\mathbb{P}_{\Sigma})$. Then we calculate the inverse of $\chi_{H}\in (H\otimes H^c)^{\vee}$ which we denote by $\chi_{H}^{-1}\in H\otimes H^c$.

\smallskip
We first give a brief review of smooth toric DM stacks and their Grothendieck groups, see \cite{BorisovHorja1,BCS}.
\begin{definition}\cite{BCS}
Let $\Sigma$ be a simplicial fan in the lattice $N$ with lattice points $\{v_i\}_{i=1}^{n}$ on its rays (see Remark \ref{exfigure2}). We consider the open subset $U$ of $\mathbb{C}^n$ given by
	\begin{equation*}
	U:=\{(z_1,\cdots,z_n),\ \text{such that }\{i,\ z_i=0\}\in\Sigma\}.
	\end{equation*}
	Let the group $G$ be the subgroup of $(\mathbb{C^*})^{n}$ given by
	\begin{equation*}
	G=\{(\lambda_1,\cdots,\lambda_n),\ \text{such that }\prod_{i=1}^n\lambda_i^{\langle m,v_i \rangle}=1\text{ for all }m\in N^{\vee}\}.
	\end{equation*}
	The stack $\mathbb{P}_{\Sigma}$ is defined to be the stack quotient of $U$ by $G$.
\end{definition}

\begin{remark}\label{exfigure2}
We allow $\Sigma$ to be supported on a proper subset of the indices as in \cite{Ji}. For example, we could have $v_0=(0,1),\ldots,v_4=(4,1)$ and maximum cones of $\Sigma$ be $\mathbb{R}_{\geq 0}(v_0,v_1)$, $\mathbb{R}_{\geq 0}(v_1,v_4)$, see Figure \ref{sigma1}. In general, we let $\Sigma_{(1)}=\{v_{i_0},v_{i_1},\ldots,v_{i_r}, v_{i_{r+1}}\}$ be the set of rays in $\Sigma$, where $0=i_0<i_1<\ldots<i_{r+1}=n$ and $\#\Sigma_{(1)}=r+2$.
\end{remark}

\begin{figure}[H]
  \includegraphics[width=0.36\textwidth]{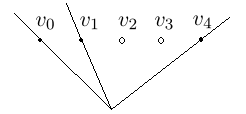}
  \caption{}
  \label{sigma1}
\end{figure}

\begin{definition}
	For each cone $\sigma\in\Sigma$, we denote $\mathrm{Box}(\Sigma)$ to be the set of points of $\gamma\in N$ that can be written as $\gamma=\sum_{i\in\sigma}\gamma_i v_i$ with $0\leq\gamma_i<1$. We denote by $\mathrm{Box}(\Sigma)$ the union of $\mathrm{Box}(\sigma)$ for all $\sigma\in\Sigma$.
\end{definition}
To each point in $\mathrm{Box}(\Sigma)$ we associate a twisted sector defined as the closed toric substack associated to the minimal cone $\sigma(\gamma)$ in $\Sigma$ which contains $\gamma$. We denote the corresponding cohomology by $H_{\gamma}$. The presentations for $H_{\gamma}$ and its dual module $H_{\gamma}^c$ are given in \cite{BorisovHorja} as follows.

\begin{definition}\label{oldH} \cite{BorisovHorja}
The cohomology $H_{\gamma}$ is defined to be the quotient of $\mathbb{C}[\overline{D}_i,i\in \mathrm{Star}(\sigma(\gamma))-\sigma(\gamma)]$ by the ideal generated by
\begin{equation*}
\begin{split}
&\prod_{i\in J}\overline{D}_i\text{ for all }J\not\in\mathrm{Star}(\sigma(\gamma)),\\
\sum_{i\in \mathrm{Star}(\sigma(\gamma))-\sigma(\gamma)}&(m\cdot v_i)\overline{D}_i \text{ for } m\in \mathrm{Ann}(v_i, i\in \sigma(\gamma)).
\end{split}
\end{equation*}
\end{definition}

\begin{definition}\label{oldH^c} \cite{BorisovHorja}
The module $H^c_{\gamma}$ over $\mathbb{C}[\overline{D}_i,i\in \mathrm{Star}(\sigma(\gamma))-\sigma(\gamma)]$ is generated by $F_I$ for $I\in \mathrm{Star}(\sigma(\gamma))$ and $\sigma_I^{\circ}\subseteq C^{\circ}$ with relations
\begin{equation*}
\begin{split}
&\overline{D}_iF_I-F_{I\cup \{i\}} \text{ for } i\notin I, I\cup \{i\} \in \mathrm{Star}(\sigma(\gamma)),\\
\text{and } &\overline{D}_iF_I \text{ for } i\notin I, I\cup \{i\} \notin \mathrm{Star}(\sigma(\gamma)).
\end{split}
\end{equation*}
\end{definition}

Now we describe these definitions in our situation of $\mathrm{rk} \,\mathrm{N}=2$. The set $\mathrm{Box}(\Sigma)$ consists of $(0,0)$ and all $\gamma$ of degree one in the interior of cones of $\Sigma$, see Figure \ref{sigma2}.

\begin{figure}[H]
  \includegraphics[width=0.46\textwidth]{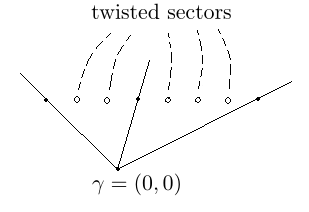}
  \caption{}
  \label{sigma2}
\end{figure}

\begin{proposition}\label{newH}
For $\gamma\neq (0,0)$, we have $\mathrm{dim} \, H_{\gamma}=1$. We denote the generator of $H_{\gamma}$ by $1_{\gamma}$ so that $H_{\gamma}=\mathbb{C}1_{\gamma}$.
For $\gamma=(0,0)$, we have $\mathrm{dim}(H_{\gamma})=\#\Sigma_{(1)}-1$. The cohomology $H_{\gamma}$ is the quotient of $\mathbb{C}[D_i,i\in \Sigma]$ by the ideal generated by
\begin{equation*}
\begin{split}
&D_iD_j, \text{ for any }i,j \in \Sigma, \ \ \ D_i \text{ for } i\notin \Sigma,\\
&\sum_{i\in \Sigma}D_i, \ \ \  \sum_{i\in \Sigma}iD_i.
\end{split}
\end{equation*}
\end{proposition}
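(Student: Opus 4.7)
The plan is to handle the two cases $\gamma\neq(0,0)$ and $\gamma=(0,0)$ separately, in each case translating Definition \ref{oldH} into the concrete rank-two picture.

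For $\gamma\neq(0,0)$, I first observe that any nonzero $\gamma\in\mathrm{Box}(\Sigma)$ must have $\sigma(\gamma)$ equal to a two-dimensional cone of $\Sigma$: each ray generator $v_{i_k}$ is primitive in $N=\mathbb{Z}^2$, so the only lattice point of the form $\gamma_k v_{i_k}$ with $0\leq \gamma_k<1$ is the origin. Since $\sigma(\gamma)$ is then already maximal, $\mathrm{Star}(\sigma(\gamma))=\{\sigma(\gamma)\}$, and the set $\mathrm{Star}(\sigma(\gamma))-\sigma(\gamma)$ in Definition \ref{oldH} contributes no ray indices, hence no generators. The presentation collapses to $H_\gamma=\mathbb{C}\cdot 1_\gamma$, so $\dim H_\gamma=1$.

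For $\gamma=(0,0)$ I have $\sigma(\gamma)=\{0\}$ and $\mathrm{Star}(\{0\})=\Sigma$, so the generators are exactly the classes $D_{i_k}$ for $v_{i_k}\in\Sigma_{(1)}$ (with $D_i=0$ for $v_i\notin\Sigma_{(1)}$, matching the notation in the proposition). The Stanley--Reisner relations of Definition \ref{oldH} give $D_{i_k}D_{i_l}=0$ whenever $\{v_{i_k},v_{i_l}\}$ is not a two-cone of $\Sigma$, i.e.\ whenever the rays are non-adjacent, while every triple product vanishes automatically because no three rays span a cone in rank two. Taking $m=(0,1)$ and $m=(1,0)$ in the linear relations $\sum_k(m\cdot v_{i_k})D_{i_k}=0$ yields $\sum_k D_{i_k}=0$ and $\sum_k i_k D_{i_k}=0$, matching the linear relations of the proposition. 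What remains is to upgrade the Stanley--Reisner vanishing to adjacent pairs as well.

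For this, I would combine the two linear relations into $\sum_{k\geq 1}(i_k-i_0)D_{i_k}=0$, multiply by $D_{i_0}$, and use Stanley--Reisner to kill every term with $k\geq 2$; since $i_1>i_0$, this forces $D_{i_0}D_{i_1}=0$. Multiplying each linear relation by a general $D_{i_k}$ and eliminating $D_{i_k}^2$ then produces the recurrence
\[
(i_{k+1}-i_k)\,D_{i_k}D_{i_{k+1}}\;=\;(i_k-i_{k-1})\,D_{i_{k-1}}D_{i_k},
\]
which propagates the vanishing of adjacent products to all $k$ (strict monotonicity of the $i_k$ keeps the leading coefficients nonzero), after which $D_{i_k}^2=-D_{i_{k-1}}D_{i_k}-D_{i_k}D_{i_{k+1}}=0$ as well. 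So the ideal of Definition \ref{oldH} already contains every $D_iD_j$, and the two presentations agree. The dimension count is then immediate: modulo the two independent linear relations, $H_\gamma$ is spanned by the unit and the $r+2$ classes $D_{i_k}$ with no nonzero higher-degree products, giving $\dim H_\gamma=1+(r+2)-2=r+1=\#\Sigma_{(1)}-1$. The main obstacle is the propagation step to adjacent pairs; once the boundary case $D_{i_0}D_{i_1}=0$ is obtained, the recurrence drives the rest mechanically.
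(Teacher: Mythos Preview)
Your proof is correct and follows essentially the same route as the paper. Both arguments reduce the $\gamma\neq(0,0)$ case to the observation that $\mathrm{Star}(\sigma(\gamma))=\sigma(\gamma)$, and for $\gamma=(0,0)$ both start from $D_{i_0}D_{i_1}=0$ (obtained by eliminating $D_{i_0}$ from the two linear relations and multiplying by $D_{i_0}$) and then propagate along adjacent pairs; your recurrence $(i_{k+1}-i_k)D_{i_k}D_{i_{k+1}}=(i_k-i_{k-1})D_{i_{k-1}}D_{i_k}$ is simply a cleaner packaging of the paper's step of re-expressing each $D_{i_{k+1}}$ via the linear relations before multiplying by $D_{i_k}$.
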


\begin{proof}
This proposition follows from Definition \ref{oldH}.

\smallskip
When $\gamma\neq (0,0)$, we have $\mathrm{Star}(\sigma(\gamma))=\sigma(\gamma)$. Thus $H_{\gamma}$ is isomorphic to $\mathbb{C}$, which implies our result.
When $\gamma= (0,0)$, we have $\sigma(\gamma)=\emptyset$. Thus $\mathrm{Ann}(v_i, i\in \sigma(0,0))=N^{\vee}$. Thus we have two relations $\sum_{i\in \Sigma}D_i=0$ and $\sum_{i\in \Sigma}iD_i=0$.

\smallskip
By Definition \ref{oldH}, we have $D_iD_j=0$ if $v_i, v_j \in \Sigma$ are not adjacent. Let $\Sigma_{(1)}=\{v_{i_0},v_{i_1},\ldots,v_{i_{r+1}}\}$, where $0=i_0<i_1<\ldots<i_{r+1}=n$. Now we claim $D_{i_0}D_{i_1}=0$. With the two relations, we can express $D_{i_1}$ as linear combination of $D_{i_k}, k\geq2$. Then $v_{i_k}, k\geq2$ are not adjacent to $v_{i_0}$. Thus we get $D_{i_0}D_{i_1}=0$. Next we claim $D_{i_1}D_{i_2}=0$. With the two relations, we can express $D_{i_2}$ as linear combination of $D_{i_0}$ and $D_{i_k}, k\geq 3$. Since $v_{i_k}, k\geq 3$ are not adjacent to $v_{i_1}$ and $D_{i_0}D_{i_1}=0$, we get $D_{i_1}D_{i_2}=0$. Similarly, we get $D_{i_k}D_{i_{k+1}}=0$ for $k=0,\ldots,r$. Then by expressing $D_{i_k}$ as linear combination of $D_{i_l}, l\neq k$, we get $(D_{i_k})^2=0$ for $k=0,\ldots,r+1$.
\end{proof}

\begin{corollary}\label{basisH}
Let $\Sigma^{\circ}$ be the interior of $\Sigma$.
The basis of $H_{(0,0)}$ can be given by $1_{(0,0)}$ and $D_i, i\in \Sigma^{\circ}$. The basis of $H$ can be given by $1_{\gamma}$ for all twisted sectors $\gamma\neq(0,0)$, $1_{(0,0)}$ and $D_i, i\in \Sigma^{\circ}$.
\end{corollary}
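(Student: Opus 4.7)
The plan is to leverage Proposition \ref{newH} and reduce the claim to a small linear-algebra computation. First I would dispose of the twisted-sector contributions: for every $\gamma\neq(0,0)$ we have $H_{\gamma}=\mathbb{C}\,1_{\gamma}$ by Proposition \ref{newH}, so each such summand already contributes the basis element $1_{\gamma}$ to $H=\bigoplus_{\gamma}H_{\gamma}$. Hence both statements of the corollary collapse to the assertion that $\{1_{(0,0)}\}\cup\{D_i:i\in\Sigma^{\circ}\}$ is a basis of the single summand $H_{(0,0)}$.

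For that, I would use the presentation of $H_{(0,0)}$ from Proposition \ref{newH}. Because all products $D_iD_j$ vanish in $H_{(0,0)}$, the ring is spanned as a vector space by $1_{(0,0)}$ together with the ray classes $D_{i_0},D_{i_1},\ldots,D_{i_{r+1}}$, modulo the two linear relations
\begin{equation*}
\sum_{k=0}^{r+1}D_{i_k}=0,\qquad \sum_{k=0}^{r+1}i_k D_{i_k}=0.
\end{equation*}
The coefficient matrix of these relations in the two ``boundary'' unknowns $D_{i_0}$ and $D_{i_{r+1}}$ is $\bigl(\begin{smallmatrix}1&1\\0&n\end{smallmatrix}\bigr)$, using $i_0=0$ and $i_{r+1}=n$, which is invertible. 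Solving lets me express $D_{i_0}$ and $D_{i_{r+1}}$ explicitly as linear combinations of the interior divisors $D_{i_1},\ldots,D_{i_r}$, so the set $\{1_{(0,0)}\}\cup\{D_{i_k}:1\leq k\leq r\}$ already spans $H_{(0,0)}$.

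Finally I would close with a dimension count: Proposition \ref{newH} gives $\dim H_{(0,0)}=\#\Sigma_{(1)}-1=r+1$, and the proposed spanning set has exactly $r+1$ elements, so it must be linearly independent and therefore a basis. Combining this with the first paragraph yields the stated basis of $H$. There is no real obstacle here beyond checking that the $2\times 2$ system in the boundary divisors is nondegenerate, which is immediate from $n\neq 0$.
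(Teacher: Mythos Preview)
Your argument is correct and is exactly the natural elaboration of what the paper leaves implicit: the corollary is stated without proof, relying on Proposition \ref{newH}, and you have simply written out the obvious linear-algebra step (eliminating the two boundary divisors via the two linear relations) together with the dimension count. There is nothing to add.
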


\begin{proposition}\label{newH^c}
For $\gamma \neq (0,0)$, we have that $H^c_{\gamma}$ is over $\mathbb{C}$ generated by $F_{I}$, where $I=\sigma(\gamma)$. For simplicity, we write $F_I$ as $F_{\emptyset, \gamma}$ and $H^c_{\gamma}=\mathbb{C}F_{\emptyset,\gamma}$.
For $\gamma = (0,0)$, we have that $H^c_{\gamma}$ is a module over $H_{(0,0)}$ generated by $F_{\{i,j\}}$ and $F_k$ for two dimensional cones $\{i,j\} \in\Sigma$ and $k\in \Sigma$ such that $v_k\in \Sigma^{\circ}$ with relations
\begin{equation*}
\begin{split}
&D_iF_j-F_{\{i,j\}} \text{ for } i\neq j, \text{ if }\{i,j\} \text{ is a two dimensional cone in } \Sigma,\\
&D_iF_j \text{ for } i\neq j, \text{ if }i,j \in \Sigma \text{ are not adjacent},\ \ \ D_kF_{\{i,j\}} \text{ for any } k\in \Sigma .
\end{split}
\end{equation*}
\end{proposition}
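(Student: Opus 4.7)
The strategy is to unpack Definition \ref{oldH^c} directly in the rank-two situation, with a case split on whether $\gamma$ is the origin. In each case I would identify which subsets $I$ of $\Sigma_{(1)}$ give valid generators (those with $\sigma_I^\circ\subseteq C^\circ$) and then translate the two families of relations in Definition \ref{oldH^c} into the concrete list claimed in the proposition.

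For $\gamma\neq(0,0)$, the key observation is that $\sigma(\gamma)$ is automatically a two-dimensional (hence maximal) cone of $\Sigma$, because $\gamma$ is a degree-one lattice point in the interior of a cone of $\Sigma$. In a rank-two fan, a maximal $2$D cone is its own star, so $\mathrm{Star}(\sigma(\gamma))\setminus\sigma(\gamma)$ is empty and the coefficient ring of Definition \ref{oldH^c} collapses to $\mathbb{C}$. The only $I\in\mathrm{Star}(\sigma(\gamma))$ satisfying $\sigma_I^\circ\subseteq C^\circ$ is $I=\sigma(\gamma)$ itself (its relative interior is $2$-dimensional), and no relations remain. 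Hence $H^c_\gamma=\mathbb{C}F_{\sigma(\gamma)}$, which we rename $\mathbb{C}F_{\emptyset,\gamma}$.

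For $\gamma=(0,0)$ we have $\sigma(\gamma)=\{0\}$, $\mathrm{Star}(\sigma(\gamma))=\Sigma$, and the coefficient ring is $\mathbb{C}[\overline{D}_i,\,i\in\Sigma_{(1)}]$, which surjects onto $H_{(0,0)}$ by Proposition \ref{newH}. I would enumerate the valid generators $F_I$ by the dimension of $I$: the empty cone is excluded because $\sigma_\emptyset^\circ=\{0\}\not\subseteq C^\circ$; a ray $\{k\}$ qualifies iff $v_k\in\Sigma^\circ$, yielding $F_k$; every $2$D cone $\{i,j\}$ qualifies and yields $F_{\{i,j\}}$. Translating the two relation types of Definition \ref{oldH^c} with $I$ a ray produces $\overline{D}_iF_j-F_{\{i,j\}}=0$ when $\{i,j\}\in\Sigma$ and $\overline{D}_iF_j=0$ when $i,j$ are non-adjacent, matching the first two stated relations; taking $I=\{i,j\}$ and $k\notin\{i,j\}$ forces $\overline{D}_kF_{\{i,j\}}=0$, since $\Sigma$ contains no $3$D cones.

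The main subtlety, and the place where the claimed presentation goes beyond what Definition \ref{oldH^c} provides directly, is the relation $D_kF_{\{i,j\}}=0$ for $k\in\{i,j\}$. For this I would substitute $F_{\{i,j\}}=\overline{D}_jF_i$, choosing $i$ with $v_i\in\Sigma^\circ$ (possible whenever $\Sigma$ has an interior ray), and invoke the fact established in the proof of Proposition \ref{newH} that every pairwise product $D_iD_j$ vanishes in $H_{(0,0)}$ in rank two. The degenerate case in which both rays of a maximal cone lie on the boundary of $C$ occurs only when $\Sigma$ has a single maximal cone; Proposition \ref{newH} then forces $H_{(0,0)}=\mathbb{C}$ with all $D_k=0$, and the relation holds automatically.
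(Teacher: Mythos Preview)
Your argument is correct and follows the paper's overall strategy of specializing Definition \ref{oldH^c} to rank two and then isolating the single relation, $D_kF_{\{i,j\}}=0$ for $k\in\{i,j\}$, that does not appear verbatim there. The only real difference is in how that extra relation is derived. The paper rewrites the \emph{ring} element: using the linear relations $\sum_i D_i=0$ and $\sum_i iD_i=0$ in $H_{(0,0)}$ it expresses $D_k$ (for $k\in\{i,j\}$) as a combination of $D_l$ with $l\notin\{i,j\}$, and then applies the directly given relation $D_lF_{\{i,j\}}=0$. You instead rewrite the \emph{module} element, substituting $F_{\{i,j\}}=D_jF_i$ and invoking the vanishing of all quadratic monomials in $H_{(0,0)}$ from Proposition \ref{newH}. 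The paper's route is marginally cleaner because it avoids your case split on whether at least one ray of $\{i,j\}$ is interior (in the degenerate single-cone case $D_k$ is simply the empty linear combination, i.e.\ zero), but both reductions are short and valid.
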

\begin{proof}
By Definition \ref{oldH^c}, we have $D_kF_{\{i,j\}}=0$ for $k\neq i,j$ in $H^c_{(0,0)}$. For $k\in \{i,j\}$, we express $D_k$ as linear combination of $D_l, l\neq i,j$. Then we also have $D_kF_{\{i,j\}}=0$ in the case of $k\in \{i,j\}$.
Other results in this proposition also follow from Definition \ref{oldH^c} and the  argument similar to the one in the proof of Proposition \ref{newH} which left to the reader.
\end{proof}

\begin{proposition}\label{F_ij}
Let $\Sigma_{(1)}=\{v_{i_0},v_{i_1},\ldots,v_{i_{r+1}}\}$, where $0=i_0<i_1<\ldots<i_{r+1}=n$.
There exists a unique element $F$ in $H^c_{(0,0)}$ such that $F_{\{i,j\}}=\frac{1}{|j-i|}F$ for all adjacent $i,j \in\Sigma$. We also have $D_{i_k}F_{i_k}=-(\frac{1}{i_k-i_{k-1}}+\frac{1}{i_{k+1}-i_k})F$.
\end{proposition}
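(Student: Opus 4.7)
The plan is to exploit the two linear relations $\sum_k D_{i_k}=0$ and $\sum_k i_k D_{i_k}=0$ that hold in $H_{(0,0)}$: since $H^c_{(0,0)}$ is a module over $H_{(0,0)}$, these operators annihilate every element of $H^c_{(0,0)}$. Applying them to each $F_{i_j}$ with $j=1,\ldots,r$ will produce exactly the scaling relations among the $F_{\{i_k,i_{k+1}\}}$ and simultaneously the formula for $D_{i_k}F_{i_k}$.

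First I would apply $\sum_k D_{i_k}=0$ to $F_{i_j}$. By Proposition \ref{newH^c}, $D_{i_k}F_{i_j}$ vanishes unless $k\in\{j-1,j,j+1\}$, and equals $F_{\{i_k,i_j\}}$ when $k=j\pm 1$. Hence only three terms survive, yielding
\[
D_{i_j}F_{i_j}=-F_{\{i_{j-1},i_j\}}-F_{\{i_j,i_{j+1}\}}.
\]
Next I would apply $\sum_k i_k D_{i_k}=0$ to $F_{i_j}$ and substitute the identity above to eliminate $D_{i_j}F_{i_j}$. After cancellation, the result simplifies to
\[
(i_{j+1}-i_j)\,F_{\{i_j,i_{j+1}\}}=(i_j-i_{j-1})\,F_{\{i_{j-1},i_j\}},
\]
so the product $(i_{k+1}-i_k)F_{\{i_k,i_{k+1}\}}$ is independent of $k$ as $k$ ranges over $0,\ldots,r$.

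I would then define $F$ to be this common value. By construction $F_{\{i_k,i_{k+1}\}}=\tfrac{1}{i_{k+1}-i_k}F$ for every adjacent pair, giving existence; uniqueness is immediate since $F$ is determined by any single $F_{\{i_k,i_{k+1}\}}$. Substituting back into the first displayed identity produces the claimed formula $D_{i_k}F_{i_k}=-\bigl(\tfrac{1}{i_k-i_{k-1}}+\tfrac{1}{i_{k+1}-i_k}\bigr)F$.

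There is no real obstacle: the argument is mechanical once one recognizes that the two $N^{\vee}$-relations in $H_{(0,0)}$ act as identities on the module $H^c_{(0,0)}$ and pick off the three neighbors $j-1,j,j+1$ in each sum. The only point requiring minor care is the degenerate case $r=0$, where $\Sigma^{\circ}$ is empty; then the second formula is vacuous and one simply sets $F:=n\,F_{\{0,n\}}$.
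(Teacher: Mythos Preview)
Your proof is correct and follows essentially the same approach as the paper: both arguments exploit the two linear relations in $H_{(0,0)}$ applied to $F_{i_j}$ to isolate the three neighboring terms. The paper packages this slightly differently by using the single combined relation $\sum_i (i-i_k)D_i=0$, which kills the $D_{i_k}$ term outright and yields the scaling identity directly, then obtains the formula for $D_{i_k}F_{i_k}$ from $\sum_i D_i=0$; you use the two relations separately and in the opposite order, but the content is identical.
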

\begin{proof}
Let $i_{k-1}<i_k< i_{k+1}$ be three adjacent elements of $\Sigma_{(1)}$.
Consider a relation on $D_i$
$$
\sum_{i} (i-i_{k}) D_i = 0.
$$
After multiplying by $F_{i_k}$ and using  the vanishing of all but two $D_i F_{i_k}$, we get $(i_{k} -i_{k-1})F_{i_{k-1},i_k} =(i_{k+1} -i_k)F_{i_{k},i_{k+1}}$ and the first statement follows. The second statement follows from $D_{i_k}F_{i_k}=-\sum_{j\neq k}D_{i_j}F_{i_k}=-F_{i_{k-1},i_k}-F_{i_k,i_{k+1}}.$
\end{proof}

\begin{corollary}\label{basisH^c}
The basis of $H^c_{(0,0)}$ are $F$ and $F_i, i\in \Sigma^{\circ}$. The basis of $H^c$ are $F_{\emptyset,\gamma}$ for all twisted sector $\gamma\neq(0,0)$, $F$ and $F_i, i\in \Sigma^{\circ}$.
\end{corollary}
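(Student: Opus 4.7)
The claim splits into two pieces. By the direct sum decomposition $H^c=\bigoplus_\gamma H^c_\gamma$ over twisted sectors, and since Proposition \ref{newH^c} already identifies $H^c_\gamma=\mathbb{C}F_{\emptyset,\gamma}$ for every $\gamma\neq(0,0)$, the $H^c$ statement reduces to exhibiting $\{F,F_{i_1},\ldots,F_{i_r}\}$ as a basis of $H^c_{(0,0)}$, where $\Sigma^\circ=\{v_{i_1},\ldots,v_{i_r}\}$.

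I first show spanning. By Proposition \ref{newH^c}, $H^c_{(0,0)}$ is generated as an $H_{(0,0)}$-module by the $F_{\{i,j\}}$ for adjacent pairs of rays of $\Sigma$ together with the $F_k$ for $v_k\in\Sigma^\circ$. Proposition \ref{F_ij} collapses every $F_{\{i,j\}}$ to a scalar multiple of a single element $F$, so one may take $F$ together with the $F_{i_k}$ as $H_{(0,0)}$-module generators. Since $D_iD_j=0$ in $H_{(0,0)}$ by Proposition \ref{newH}, the ring $H_{(0,0)}$ sits in degrees $\le 1$, so a $\mathbb{C}$-vector space spanning set is obtained by adjoining the products $D_\ell F$ and $D_\ell F_{i_k}$. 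The relation $D_\ell F_{\{i,j\}}=0$ forces $D_\ell F=0$, while Propositions \ref{newH^c} and \ref{F_ij} show that each $D_\ell F_{i_k}$ is already a scalar multiple of $F$. Thus $\{F,F_{i_1},\ldots,F_{i_r}\}$ spans $H^c_{(0,0)}$, and in particular $\dim H^c_{(0,0)}\le r+1$.

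For the reverse inequality I would construct an explicit $H_{(0,0)}$-module $M$ of dimension $r+1$ on which the candidate basis is visibly independent. Let $M=\mathbb{C}e\oplus\bigoplus_{k=1}^{r}\mathbb{C}e_k$ and define the action of $H_{(0,0)}$ by $D_\ell e=0$ for all $\ell$, $D_{i_{k-1}}e_k=\frac{1}{i_k-i_{k-1}}e$, $D_{i_{k+1}}e_k=\frac{1}{i_{k+1}-i_k}e$, $D_{i_k}e_k=-\bigl(\frac{1}{i_k-i_{k-1}}+\frac{1}{i_{k+1}-i_k}\bigr)e$, and $D_\ell e_k=0$ for all other $\ell$. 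All degree-two operators then act as zero since they factor through $\mathbb{C}e$. Sending $F_{\{i,j\}}\mapsto\frac{1}{|j-i|}e$ and $F_k\mapsto e_k$ respects each relation of Proposition \ref{newH^c} tautologically, hence extends to a surjective $H_{(0,0)}$-module map $H^c_{(0,0)}\twoheadrightarrow M$. Combined with the spanning bound $\dim H^c_{(0,0)}\le r+1=\dim M$, this forces an isomorphism, which establishes both the dimension and the linear independence of $\{F,F_{i_1},\ldots,F_{i_r}\}$.

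The one nontrivial obstacle is verifying that the proposed $M$ really is an $H_{(0,0)}$-module, which reduces to checking that the two linear relations $\sum_{i\in\Sigma_{(1)}}D_i=0$ and $\sum_{i\in\Sigma_{(1)}}iD_i=0$ annihilate each $e_k$. The first is immediate from the cancellation $\frac{1}{i_k-i_{k-1}}+\frac{1}{i_{k+1}-i_k}-\bigl(\frac{1}{i_k-i_{k-1}}+\frac{1}{i_{k+1}-i_k}\bigr)=0$, while the second collapses via the telescoping identity $\frac{i_{k-1}-i_k}{i_k-i_{k-1}}+\frac{i_{k+1}-i_k}{i_{k+1}-i_k}=-1+1=0$. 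Both checks are elementary, so this obstacle is mainly one of bookkeeping.
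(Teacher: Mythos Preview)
Your argument is correct. The spanning portion coincides with the paper's reasoning, but for linear independence you and the paper diverge. The paper argues directly via the grading on $H^c_{(0,0)}$: since all relations in Proposition~\ref{newH^c} live in degree $\geq 2$, the degree-one piece is the free span of the $F_i$ with $i\in\Sigma^\circ$, and Proposition~\ref{F_ij} then identifies the degree-two piece as $\mathbb{C}F$. You instead build an explicit $(r{+}1)$-dimensional quotient module $M$ and use the surjection $H^c_{(0,0)}\twoheadrightarrow M$ to force equality of dimensions. Your route is slightly longer but has the advantage of making the nonvanishing of $F$ (i.e.\ that the degree-two piece is genuinely one-dimensional rather than zero) completely transparent, a point the paper's proof leaves implicit. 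Both approaches rest on the same computations from Propositions~\ref{newH^c} and~\ref{F_ij}, so the difference is one of packaging rather than substance.
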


\begin{proof}
The degree one in $D$, $F$ subspace of  $H^c_{(0,0)}$ is freely generated by $F_i$ for $i\in \Sigma^\circ$. The above proposition shows that the degree two subspace is one-dimensional and generated by $F$.
\end{proof}

In \cite{BorisovHorja1,BorisovHorja}, the authors calculate $K_0(\mathbb{P}_{\Sigma})$ and the module $K_0^c(\mathbb{P}_{\Sigma})$ over $K_0(\mathbb{P}_{\Sigma})$.

\begin{definition}\label{defineK0}
\cite{BorisovHorja1}
The $K-$theory $K_0(\mathbb{P}_{\Sigma})$ is the quotient of the ring $\mathbb{C}[R_i,R_i^{-1}], 0\leq i \leq n$ by the relations
\begin{equation*}
\prod_{i=0}^{n}R_i^{m\cdot v_i}-1, m\in N^{\vee} \ \ and  \ \
\prod_{i\in I}(1-R_i), I \notin \Sigma.
\end{equation*}
\end{definition}

We have the following statement in the case of $\mathrm{rk\, N}=2$.

\begin{proposition}
The $K-$theory $K_0(\mathbb{P}_{\Sigma})$ is the quotient of the ring $\mathbb{C}[R_i,R_i^{-1}],$     $ 0\leq i \leq n$ by the relations
\begin{equation*}
\begin{split}
&R_i-1 \text{ for } i\notin \Sigma,\quad \prod_{i\in \Sigma}R_i -1,\quad \prod_{i\in \Sigma}R^i_i -1,\\
&(R_i-1)(R_j-1) \text{ for all pairs } \{i,j\} \text{ such that } i<j, i \text{ and } j  \text{ are not adjacent in } \Sigma.\\
\end{split}
\end{equation*}
\end{proposition}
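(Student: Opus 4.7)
The plan is to unpack Definition \ref{defineK0} in the rank-two setting with $v_i=(i,1)$ and show that its two families of relations generate exactly the four families listed in the proposition. Write $J$ for the ideal of Definition \ref{defineK0} and $J'$ for the ideal appearing in the proposition; the goal is to prove $J=J'$.

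For the containment $J'\subseteq J$, I would choose the basis $m_1=(0,1)$, $m_2=(1,0)$ of $N^\vee$, for which $m_1\cdot v_i=1$ and $m_2\cdot v_i=i$. The corresponding torus relations $\prod_{i=0}^n R_i-1$ and $\prod_{i=0}^n R_i^i-1$ lie in $J$; after reducing modulo the Stanley--Reisner relations $1-R_i$ for $i\notin\Sigma$ (which lie in $J$ since $\{i\}\notin\Sigma$), they become $\prod_{i\in\Sigma}R_i-1$ and $\prod_{i\in\Sigma}R_i^i-1$. The relation $(R_i-1)(R_j-1)$ for non-adjacent $i,j\in\Sigma$ is, up to sign, the Stanley--Reisner relation for $I=\{i,j\}\notin\Sigma$.

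For the reverse inclusion $J\subseteq J'$, I first observe that the set of $m\in N^\vee$ with $\prod_i R_i^{m\cdot v_i}-1\in J'$ forms a subgroup, by the identity $\prod R_i^{a_i+b_i}-1=(\prod R_i^{a_i})\bigl[(\prod R_i^{b_i})-1\bigr]+\bigl[(\prod R_i^{a_i})-1\bigr]$. Hence it suffices to verify the torus relations for $m_1$ and $m_2$, both of which follow from the listed generators $\prod_{i\in\Sigma}R_i-1$ and $\prod_{i\in\Sigma}R_i^i-1$ together with $R_i=1$ for $i\notin\Sigma$. For a Stanley--Reisner relation $\prod_{i\in I}(1-R_i)$ with $I\notin\Sigma$, I would split by cardinality: the cases $|I|=1$ and $|I|=2$ (with both indices in $\Sigma$) are precisely the listed relations, while if $|I|=2$ with an index outside $\Sigma$ the relation is divisible by some $R_i-1\in J'$. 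For $|I|\ge 3$, either $I$ contains some $i\notin\Sigma$ and we reduce to the $|I|=1$ case, or $I\subseteq\Sigma_{(1)}$; in the latter subcase, writing $I=\{i_1<i_2<\cdots<i_k\}$, the outer pair $\{i_1,i_3\}$ must be non-adjacent, for otherwise the two-dimensional cone spanned by $v_{i_1}$ and $v_{i_3}$ would belong to $\Sigma$ and contain the ray $v_{i_2}$ strictly in its interior, violating the fan axiom. Thus $(1-R_{i_1})(1-R_{i_3})$ divides $\prod_{i\in I}(1-R_i)$, and the relation lies in $J'$.

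The only delicate step is the combinatorial claim for $|I|\ge 3$, which reduces cleanly to the fact that collinearity of $v_{i_1},v_{i_2},v_{i_3}$ at height one forces the middle ray into the interior of the cone spanned by the outer two; everything else is bookkeeping of the two torus relations and a case split on $|I|$.
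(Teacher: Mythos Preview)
Your argument is correct and is precisely the unpacking the paper leaves implicit; the paper's own proof reads in full ``Follows from Definition \ref{defineK0}.'' Your two-sided inclusion via the subgroup trick for the torus relations and the case split on $|I|$ for the Stanley--Reisner relations is the natural way to make that sentence honest, and the key combinatorial point---that three collinear rays at height one force the outer two to be non-adjacent---is stated and justified correctly (minor quibble: ``outer pair'' would more naturally be $\{i_1,i_k\}$, but $\{i_1,i_3\}$ works just as well since $i_2$ sits between them).
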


\begin{proof}
Follows from Definition \ref{defineK0}.
\end{proof}

\begin{definition}\label{defineK0^c}
\cite{BorisovHorja}
The module $K_0^c(\mathbb{P}_{\Sigma})$ is defined to be a module over $K_0(\mathbb{P}_{\Sigma})$ generated by $G_I$, where $I\in \Sigma$, $\sigma_{I}^{\circ}\subseteq C^{\circ}$, with relations
\begin{equation*}
\begin{split}
&(1-R_i^{-1})G_I=G_{I\cup \{i\}}, \text{ if }I\cup \{i\} \in \Sigma,\\
&(1-R_i^{-1})G_I=0, \text{ if }I\cup \{i\}\notin \Sigma,
\end{split}
\end{equation*}
for all $i$, $I$ such that $i\notin I$.
\end{definition}

We have the following statement in the case of $\mathrm{rk\, N}=2$.

\begin{proposition}
The module $K_0^c(\mathbb{P}_{\Sigma})$ is a module over $K_0(\mathbb{P}_{\Sigma})$ generated by $G_I$, where $I$ is either a ray in the interior of $\Sigma$ or a two dimensional cone in $\Sigma$, with relations
\begin{equation*}
\begin{split}
&(1-R_i^{-1})G_j=G_{\{i,j\}}, \text{ if }\{i,j\} \text{ is a two dimensional cone in } \Sigma,\\
&(1-R_i^{-1})G_j=0, \text{ if }i,j \in \Sigma \text{ are not adjacent},\\
&(1-R_k^{-1})G_{\{i,j\}}=0 \text{ for } k\neq i,j.
\end{split}
\end{equation*}
\end{proposition}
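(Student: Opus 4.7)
The statement is the $\mathrm{rk}\,N=2$ specialization of Definition \ref{defineK0^c}, so my plan is to match generators and relations by a case analysis parallel to the one used in Proposition \ref{newH^c}. The only cones in a two-dimensional fan are $\emptyset$, rays $\{i\}$, and two-dimensional cones $\{i,j\}$ with $v_i, v_j$ adjacent in $\Sigma$; I will treat these cases in turn.

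For the generators, the condition $\sigma_I^\circ \subseteq C^\circ$ excludes $I = \emptyset$ (since $\sigma_\emptyset = \{0\} \subseteq \partial C$) and, among rays of $\Sigma$, excludes $\{0\}$ and $\{n\}$, whose open rays lie on the two boundary rays of $C$. For a two-dimensional cone $\{i,j\}$ in $\Sigma$, $\sigma_I^\circ$ is automatically two-dimensional and contained in $C^\circ$. This leaves exactly the rays in the interior of $\Sigma$ together with all two-dimensional cones of $\Sigma$, matching the list in the proposition.

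For the relations, I would apply the two families in Definition \ref{defineK0^c} with $I$ running over the generator types above. For $I = \{j\}$ an interior ray and $i \neq j$, adjacency of $v_i$ and $v_j$ in $\Sigma$ determines whether $\{i,j\} \in \Sigma$: in the adjacent case the first relation yields $(1-R_i^{-1})G_j = G_{\{i,j\}}$, in the non-adjacent case the second yields $(1-R_i^{-1})G_j = 0$. For $I = \{i,j\}$ a two-dimensional cone and $k \notin I$, the set $\{i,j,k\}$ contains three rays and therefore cannot form a cone in a two-dimensional fan, so $I \cup \{k\} \notin \Sigma$ and the second relation gives $(1-R_k^{-1})G_{\{i,j\}} = 0$. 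These are precisely the three relations asserted, and no other relations are produced.

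There is no substantive obstacle here; the argument is pure bookkeeping. The points requiring mild care are to remember that $(1-R_i^{-1})G_I$ with $i \in I$ is a product in the module rather than an imposed relation, and that the unordered notation $\{i,j\} = \{j,i\}$ should not cause double-counting of generators or relations.
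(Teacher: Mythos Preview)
Your proposal is correct and follows the same approach as the paper, which simply records ``Follows from Definition \ref{defineK0^c}.'' You have supplied exactly the case analysis that this one-line proof leaves implicit: enumerating the cones $I$ with $\sigma_I^\circ\subseteq C^\circ$ in a two-dimensional fan, and reading off the relations according to whether $I\cup\{i\}$ lies in $\Sigma$. One tiny point you leave unstated is the case $i\notin\Sigma_{(1)}$, but there $1-R_i^{-1}=0$ already in $K_0(\mathbb{P}_\Sigma)$, so the corresponding relation is vacuous and may be omitted.
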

\begin{proof}
Follows from Definition \ref{defineK0^c}.
\end{proof}

The following proposition of \cite{BorisovHorja}  describes the analogs of Chern class isomorphisms in this situation.
\begin{proposition}\label{definech,ch^c}
\cite{BorisovHorja0,BorisovHorja}
	There are natural algebra isomorphisms
	\begin{equation*}
	ch:K_0(\mathbb{P}_{\Sigma})\xrightarrow{\sim}\bigoplus_{\gamma\in\mathrm{Box}(\Sigma)}H_{\gamma},\quad ch^c:K_0^c(\mathbb{P}_{\Sigma})\xrightarrow{\sim}\bigoplus_{\gamma\in\mathrm{Box}(\Sigma)}H_{\gamma}^c.
	\end{equation*}
The map $ch$ is defined to be $\oplus_{\gamma}ch_{\gamma}$. The map $ch_{\gamma}:K_0(\mathbb{P}_{\Sigma}) \rightarrow H_{\gamma}$ is given by
\begin{equation}\label{ch_gamma}
\begin{split}
&ch_{\gamma}(R_i)=1,    \ \ \ \ \ \ \ \ \ \ \ \ \ \ \ \ \ i\notin \mathrm{Star}(\sigma(\gamma))\\
&ch_{\gamma}(R_i)=e^{\overline{D}_i},    \ \ \ \ \ \  \ \ \ \ \ \ \ \ \  i\in \mathrm{Star}(\sigma(\gamma))-\sigma(\gamma)\\
&ch_\gamma({R_i})=e^{2\pi {\rm i} \gamma_i}\prod_{j\notin \sigma(\gamma)}ch_{\gamma}(R_j)^{m_i \cdot v_j}, \ \ i\in \sigma(\gamma)
\end{split}
\end{equation}
where $m_i$ is any $\mathbb{Q}-$valued linear function on $N$ which takes values $-1$ on $v_i$ and $0$ on all other $v_j$, $j\in \sigma(\gamma)$.

The $ch^c$ is defined to be $\oplus_{\gamma}ch^c_{\gamma}$. The projection $ch^c_{\gamma}:K^c_0(\mathbb{P}_{\Sigma}) \rightarrow H^c_{\gamma}$ is given by
$ch^c_{\gamma}(\prod_{i=0}^n R_i^{l_i}G_I)=0$ for $I\nsubseteq \mathrm{Star}(\sigma(\gamma))$ and
\begin{equation}\label{ch^c_gamma}
\begin{split}
ch^c_{\gamma}(\prod_{i=0}^n R_i^{l_i}G_I)=&\prod_{i=0}^n ch_{\gamma}(R_i)^{l_i}\prod_{i\in I, i\notin \sigma(\gamma)}(\frac{1-e^{-\overline{D}_i}}{\overline{D}_i})\\
&\cdot\prod_{i\in I\cap \sigma(\gamma)}(1-ch_{\gamma}(R_i)^{-1})\overline{F}_{\overline{I},\gamma}
\end{split}
\end{equation}
for $I\in\mathrm{Star}(\sigma(\gamma))$. The cone $\overline{I}$ in the induced fan is defined by the set of indices in $I$, but not in $\sigma(\gamma)$. The
$\overline{F}_{\overline{I},\gamma}$ indicates the generator of $H_{\gamma}^{c}$ that corresponds to $\overline{I}$ in the induced fan $\Sigma_{\gamma}$ \cite{BorisovHorja}.
\end{proposition}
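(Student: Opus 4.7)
The plan is to prove the two isomorphisms in parallel by (a) checking that the explicit formulas in \eqref{ch_gamma} and \eqref{ch^c_gamma} descend through the defining relations of $K_0(\mathbb{P}_\Sigma)$ and $K_0^c(\mathbb{P}_\Sigma)$ from Definitions \ref{defineK0} and \ref{defineK0^c}, and (b) matching the dimensions sector-by-sector via the $\mathrm{Box}(\Sigma)$ decomposition and the explicit bases from Corollaries \ref{basisH} and \ref{basisH^c}.

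For the map $ch$, I would first verify the toric (multiplicative) relation $\prod_i R_i^{m\cdot v_i}=1$ inside each $H_\gamma$. Splitting the product according to the three cases in \eqref{ch_gamma}, indices $i\notin\mathrm{Star}(\sigma(\gamma))$ contribute $1$, indices $i\in\mathrm{Star}(\sigma(\gamma))-\sigma(\gamma)$ contribute $\exp\bigl(\sum (m\cdot v_i)\overline{D}_i\bigr)$, and after rewriting the $i\in\sigma(\gamma)$ factors using the third line of \eqref{ch_gamma} the surviving $\overline{D}_j$-exponent becomes the functional $m-\sum_{i\in\sigma(\gamma)}(m\cdot v_i)m_i$ paired with $v_j$. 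By construction of the $m_i$, this functional annihilates every $v_i$ with $i\in\sigma(\gamma)$, so it lies in the annihilator used in Definition \ref{oldH} and the exponential dies. The leftover phase $\exp\bigl(2\pi{\rm i}\sum_{i\in\sigma(\gamma)}\gamma_i(m\cdot v_i)\bigr)$ equals $1$ because $\gamma-\sum_{i\in\sigma(\gamma)}\gamma_iv_i\in N$. The Stanley--Reisner relations $\prod_{i\in I}(1-R_i)=0$ for $I\notin\Sigma$ split similarly: either $I\not\subset\mathrm{Star}(\sigma(\gamma))$ so some factor is already $0$, or $I\cup\sigma(\gamma)\notin\Sigma$ and $\prod_{i\in I}(1-e^{\overline{D}_i})$ is killed by the monomial relations of Definition \ref{oldH}. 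Multiplicativity is built into \eqref{ch_gamma}, and a sector-by-sector dimension count then upgrades $ch$ to an isomorphism.

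The argument for $ch^c$ runs in parallel, with the additional task of matching the $K_0$-module structure to the $H$-module structure via $ch$. Using Proposition \ref{F_ij} together with the basis from Corollary \ref{basisH^c}, I would verify the defining relations of Definition \ref{defineK0^c}: the action of $1-R_i^{-1}$ on $G_I$ either adjoins $i$ to $I$, in which case the change of prefactor in \eqref{ch^c_gamma} is precisely $(1-e^{-\overline{D}_i})=\overline{D}_i\cdot\bigl((1-e^{-\overline{D}_i})/\overline{D}_i\bigr)$, or forces the image to vanish when $I\cup\{i\}\notin\Sigma$ by a Stanley--Reisner argument carried out in the induced fan $\Sigma_\gamma$. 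Again a Box-wise dimension count supplies surjectivity, and freeness in each sector promotes this to an isomorphism.

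The step I expect to be the main obstacle is the independence of \eqref{ch_gamma} from the auxiliary $\mathbb{Q}$-valued functional $m_i$: two admissible choices $m_i,m_i'$ differ by a functional that annihilates every $v_j$ with $j\in\sigma(\gamma)$, and one has to check that the resulting discrepancy in the $\overline{D}_j$-exponential factors through the torus relations of $H_\gamma$. This is a purely combinatorial consistency check, but it is entangled with the verification of the toric relation above, and it is the one spot where the precise shape of the induced fan $\Sigma_\gamma$ together with the fractional coordinates $\gamma_i\in[0,1)$ both enter essentially. Once this consistency is in hand, the remaining verifications become mechanical.
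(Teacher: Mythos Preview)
The paper does not supply its own proof of this proposition: it is quoted verbatim from \cite{BorisovHorja0,BorisovHorja} and carries no \texttt{proof} environment. So there is nothing in the present paper to compare your attempt against; the authors simply import the statement as background.

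Your outline is a reasonable reconstruction of how the original references establish the result, and the structure (check that the formulas respect the defining relations of $K_0$ and $K_0^c$, verify the module compatibility, then count dimensions sector by sector) is exactly the shape of the argument in \cite{BorisovHorja0,BorisovHorja}. One caution: you invoke Corollaries~\ref{basisH} and~\ref{basisH^c} and Proposition~\ref{F_ij} for the dimension count, but those are specific to the $\mathrm{rk}\,N=2$ setting of this paper, whereas Proposition~\ref{definech,ch^c} is stated in full generality. If you only need the rank-two case that is fine, but if you intend to prove the general statement you must replace those inputs by the general dimension formulas for $H_\gamma$ and $H_\gamma^c$ from \cite{BorisovHorja}.
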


We now introduce a duality involution on $H$ which is meant to be a cohomology analog of the duality involution in $K$-theory.
\begin{definition}
Let $\gamma=\sum_{k\in \sigma(\gamma)}\gamma_kv_k$ with $0 <\gamma_k<1$ be a twisted sector of $\Sigma$.  The dual of $\gamma$ is defined to be the twisted sector
$$\gamma^{*}=\sum_{k\in \sigma(\gamma)}(1-\gamma_k)v_k=\sum_{k\in \sigma(\gamma)}v_k- \gamma.$$
We define a duality map $*: H \rightarrow H$ by $(1_{\gamma})^{*}=1_{\gamma^{*}}$ and $(D_i)^{*}=-D_i$. In particular, $*$ maps $H_{\gamma}$ to $H_{\gamma^{*}}$.
\end{definition}

\begin{lemma}
For any element $a=\prod_{i=0}^{n}R_i^{l_i}$, we have $ch_{\gamma}(a^{*})=(ch_{\gamma^{*}}(a))^{*}$. Then $ch(a^{*})=(ch(a))^{*}$.
\end{lemma}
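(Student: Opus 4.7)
The plan is to reduce the identity to the case of a single generator $R_i$ and extend multiplicatively, where $a^* = \prod_i R_i^{-l_i}$ is the natural K-theoretic duality (so $R_i^* = R_i^{-1}$). The map $ch_\gamma$ is a ring homomorphism and $*$ is a ring homomorphism $H_\gamma \to H_{\gamma^*}$ sending $1_\gamma \mapsto 1_{\gamma^*}$ and $\overline{D}_i \mapsto -\overline{D}_i$. The first observation I would make is that $\sigma(\gamma^*) = \sigma(\gamma)$, since $\gamma$ and $\gamma^*$ lie in the interior of the same minimal cone; consequently $\mathrm{Star}(\sigma(\gamma^*)) = \mathrm{Star}(\sigma(\gamma))$, and the three cases of formula \eqref{ch_gamma} split along the same index sets for both $\gamma$ and $\gamma^*$.

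The verification on $R_i$ then proceeds by the case analysis of \eqref{ch_gamma}. If $i \notin \mathrm{Star}(\sigma(\gamma))$, both sides equal $1$. If $i \in \mathrm{Star}(\sigma(\gamma)) - \sigma(\gamma)$, then $ch_\gamma(R_i^{-1}) = e^{-\overline{D}_i} = (e^{\overline{D}_i})^* = (ch_{\gamma^*}(R_i))^*$, using $(\overline{D}_i)^* = -\overline{D}_i$. A convenient by-product of these two cases is the identity $(ch_\gamma(R_j))^* = ch_\gamma(R_j)^{-1}$ valid for all $j \notin \sigma(\gamma)$, which will feed into the third case.

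The main obstacle is the case $i \in \sigma(\gamma)$. Here
\begin{equation*}
ch_\gamma(R_i^{-1}) = e^{-2\pi{\rm i}\gamma_i}\prod_{j \notin \sigma(\gamma)} ch_\gamma(R_j)^{-m_i \cdot v_j},
\end{equation*}
while, by applying $*$ to the formula for $ch_{\gamma^*}(R_i)$ and using the multiplicativity of $*$,
\begin{equation*}
(ch_{\gamma^*}(R_i))^* = e^{2\pi{\rm i}(1-\gamma_i)}\prod_{j \notin \sigma(\gamma)}\bigl((ch_{\gamma^*}(R_j))^*\bigr)^{m_i \cdot v_j}.
\end{equation*}
The scalar factors match because $e^{2\pi{\rm i}(1-\gamma_i)} = e^{-2\pi{\rm i}\gamma_i}$. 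For the product, the formulas defining $ch_\gamma(R_j)$ and $ch_{\gamma^*}(R_j)$ for $j \notin \sigma(\gamma)$ depend only on the common cone $\sigma(\gamma) = \sigma(\gamma^*)$, not on the box vector, so $ch_{\gamma^*}(R_j) = ch_\gamma(R_j)$; combined with the by-product from the previous step, both products collapse to $\prod_j ch_\gamma(R_j)^{-m_i \cdot v_j}$. This completes the check on generators, and multiplicativity of $ch_\gamma$ and $*$ yields the identity for arbitrary $a = \prod R_i^{l_i}$.

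The second statement $ch(a^*) = (ch(a))^*$ is then a formal consequence: decomposing along $\mathrm{Box}(\Sigma)$ and using that $*$ sends $H_{\gamma^*}$ to $H_\gamma$, the $\gamma$-component of $(ch(a))^*$ is $(ch_{\gamma^*}(a))^*$, which equals $ch_\gamma(a^*)$ by the first statement, so the two sides agree componentwise.
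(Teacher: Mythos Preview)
Your proof is correct and follows essentially the same approach as the paper: both arguments expand the two sides via the explicit formula \eqref{ch_gamma} and compare them using the key facts $\sigma(\gamma^{*})=\sigma(\gamma)$, $(\overline{D}_i)^{*}=-\overline{D}_i$, and $e^{2\pi{\rm i}(1-\gamma_i)}=e^{-2\pi{\rm i}\gamma_i}$. Your organization---reducing to single generators $R_i$ by multiplicativity and then doing the three-case check---is slightly more explicit than the paper's direct product computation, and you also spell out the derivation of the second claim from the first, which the paper leaves implicit.
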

\begin{proof}
By equation \eqref{ch_gamma}, we have
$$ch(a^{*})=\prod_{i\in \mathrm{Star}(\sigma(\gamma))-\sigma(\gamma)} e^{-\overline{D}_il_i}\prod_{i\in\sigma(\gamma)}\big(e^{2\pi {\rm i} \gamma_i(-l_i)}\prod_{j\notin \sigma(\gamma)}ch_{\gamma}(R_j)^{m_i \cdot v_j}\big),$$
 $$ch_{\gamma^{*}}(a)=\prod_{i\in \mathrm{Star}(\sigma(\gamma^{*}))-\sigma(\gamma^{*})} e^{\overline{D}_il_i}\prod_{i\in\sigma(\gamma^{*})}\big(e^{2\pi {\rm i} (1-\gamma_i)(l_i)}\prod_{j\notin \sigma(\gamma^{*})}ch_{\gamma}(R_j)^{m_i \cdot v_j}\big).$$
 Since $(D_i)^{*}=-D_i$ and $\sigma(\gamma^{*})=\sigma(\gamma)$, we get our result.
\end{proof}

\begin{definition}\label{chiH}
We define the map $\chi_{H}: H^{c}\rightarrow \mathbb{ C}$ to be
$$\chi_{H}(\beta)=\sum_{\gamma}\frac{1}{|\mathrm{Box}(\sigma(\gamma))|}\int_{\gamma}Td(\gamma)\beta,$$
where $\beta\in H^{c}$ and $Td(\gamma):=\prod_{i\in\sigma(\gamma)}\frac{1}{1-ch(R_i^{-1})}\prod_{\substack{i\in \mathrm{Star}(\sigma(\gamma))\\ i\notin\sigma(\gamma)}}(\frac{D_i}{1-ch(R_i^{-1})})$.
We define the Euler characteristic pairing
$$\chi_{H}:H \times H^c\rightarrow \mathbb{Z}$$
by $$\chi_{H}(\alpha,\beta)=\chi_{H}(\alpha^{*}\beta),$$
for any $\alpha\in H$ and $\beta\in H^c$.
We regard $\chi_{H}$ as an element of $(H\otimes H^c)^{\vee}$.
\end{definition}

\begin{definition}
We define the dual map $*:K_0(\mathbb{P}_{\Sigma}) \rightarrow K_0(\mathbb{P}_{\Sigma})$ by $(R_i)^{*}=R_i^{-1}$.
\end{definition}

Our definition of $\chi_{H}$ was chosen specifically to be compatible with the Euler characteristic pairing $\chi$ between $K_0(\mathbb{P}_{\Sigma})$ and $K^c_0(\mathbb{P}_{\Sigma})$ defined in \cite{BorisovHorja}.
\begin{lemma}\label{chvee}
For any $a\in K_0(\mathbb{P}_{\Sigma})$ and $b\in K^c_0(\mathbb{P}_{\Sigma})$, we have $$\chi_{H}(ch(a),ch^{c}(b))=\chi(a,b).$$
\end{lemma}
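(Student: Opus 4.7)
The plan is to reduce the identity to the twisted-sector decomposition of the Euler characteristic pairing from \cite{BorisovHorja} and then match it term-by-term with the definition of $\chi_H$. By bilinearity, it suffices to verify the identity on generators, so I would take $a = \prod_i R_i^{l_i} \in K_0(\mathbb{P}_\Sigma)$ and $b = \prod_i R_i^{l_i'} G_I \in K^c_0(\mathbb{P}_\Sigma)$ for $I \in \Sigma$.

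First, I would rewrite the left-hand side using Definition \ref{chiH} and the preceding lemma:
\begin{equation*}
\chi_H(ch(a), ch^c(b)) = \chi_H\bigl(ch(a)^* \cdot ch^c(b)\bigr) = \chi_H\bigl(ch(a^*) \cdot ch^c(b)\bigr).
\end{equation*}
Decomposing along the twisted sectors via $ch = \oplus_\gamma ch_\gamma$ and $ch^c = \oplus_\gamma ch^c_\gamma$ from Proposition \ref{definech,ch^c} turns this into
\begin{equation*}
\chi_H(ch(a), ch^c(b)) = \sum_{\gamma \in \mathrm{Box}(\Sigma)} \frac{1}{|\mathrm{Box}(\sigma(\gamma))|} \int_\gamma Td(\gamma) \, ch_\gamma(a^*) \, ch^c_\gamma(b).
\end{equation*}

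Second, I would invoke the twisted-sector formula for $\chi(a,b)$ as defined in \cite{BorisovHorja}. By the stacky Hirzebruch--Riemann--Roch theorem for smooth toric DM stacks, $\chi(a,b)$ is given by exactly the same sum of integrals over twisted sectors, with normalization factors $\frac{1}{|\mathrm{Box}(\sigma(\gamma))|}$. The factors $\frac{1}{1-ch(R_i^{-1})}$ for $i \in \sigma(\gamma)$ in $Td(\gamma)$ encode the inverse equivariant Euler class of the normal bundle to the sector, while the factors $\frac{D_i}{1-ch(R_i^{-1})} = \frac{D_i}{1-e^{-D_i}}$ for $i \in \mathrm{Star}(\sigma(\gamma)) \setminus \sigma(\gamma)$ recover the ordinary Todd class of the sector. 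Matching the two expressions term by term completes the proof.

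The main obstacle is the careful bookkeeping required in the term-by-term matching. One must verify that the age-twisted factors $e^{2\pi {\rm i} \gamma_i}$ in \eqref{ch_gamma} combine correctly with the cancellations produced in \eqref{ch^c_gamma} between $(1-ch_\gamma(R_i)^{-1})$ and the corresponding factor of $Td(\gamma)$ when $i \in I \cap \sigma(\gamma)$, and that the induced-fan generator $\overline{F}_{\overline{I}, \gamma}$ pairs against $Td(\gamma)$ so as to produce the Chern--Dold correction coming from the inertia stack of $\mathbb{P}_\Sigma$. Once these conventions are aligned, the equality $\chi_H(ch(a), ch^c(b)) = \chi(a,b)$ follows directly.
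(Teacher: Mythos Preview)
Your proposal is correct and follows essentially the same route as the paper: both reduce the identity to the twisted-sector Riemann--Roch formula for $\chi$ from \cite{BorisovHorja}, combined with the preceding lemma $ch(a^*)=ch(a)^*$ and the sector-by-sector decomposition of $ch$ and $ch^c$.

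The one place where you make your life harder than necessary is the final ``matching'' step. You describe careful bookkeeping with the age factors $e^{2\pi{\rm i}\gamma_i}$, the cancellations between $(1-ch_\gamma(R_i)^{-1})$ and $Td(\gamma)$, and Chern--Dold corrections. None of this is needed. The paper simply cites Proposition~4.5 of \cite{BorisovHorja}, which already expresses $\chi(a,b)$ as
\[
\chi(a,b)=\sum_{\gamma}\frac{1}{|\mathrm{Box}(\sigma(\gamma))|}\int_{\gamma}Td(\gamma)\,ch^c_\gamma(a^*b),
\]
together with Proposition~3.11 of \cite{BorisovHorja}, the module compatibility $ch^c_\gamma(a^*b)=ch_\gamma(a^*)\,ch^c_\gamma(b)$. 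With those two citations, the right-hand side is literally $\chi_H(ch(a^*)\,ch^c(b))=\chi_H(ch(a),ch^c(b))$ by Definition~\ref{chiH} and the preceding lemma, and the proof is complete in three lines with no bookkeeping at all.
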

\begin{proof}
By definition of $\chi$ and Proposition $4.5$ of \cite{BorisovHorja}, we get
\begin{equation*}
\begin{split}
\chi(a,b)=\chi(a^{*}b)=&\sum_{\gamma}\frac{1}{|\mathrm{Box}(\sigma(\gamma))|}\int_{\gamma}Td(\gamma)ch_{\gamma}^c(a^{*}b).
\end{split}
\end{equation*}
Propsition $3.11$ in \cite{BorisovHorja} shows that $ch^c$ is compatible with $ch$, so we have $ch_{\gamma}^c(a^{*}b)=ch_{\gamma}(a^{*})ch^c_{\gamma}(b)$.
Then by Lemma \ref{chvee}, we get $ch_{\gamma}(a^{*})ch^c_{\gamma}(b)=ch_{\gamma^{*}}(a)^{*}ch^c_{\gamma}(b)$. Therefore we get
\begin{equation*}
\begin{split}
\chi(a,b)=&\sum_{\gamma}\frac{1}{|\mathrm{Box}(\sigma(\gamma))|}\int_{\gamma}Td(\gamma)ch_{\gamma^{*}}(a)^{*}ch^c_{\gamma}(b)\\
=&\chi_{H}(ch(a)^{*}ch^c(b))=\chi_{H}(ch(a),ch^{c}(b)).
\end{split}
\end{equation*}
\end{proof}
We now calculate $\chi_{H}$ for $\mathrm{rk \, N}=2$.
We first compute the part of $\chi_{H,\gamma}$ contributed by all twisted sectors $\gamma\neq(0,0)$ which we denote by $(\chi_{H})_{\text{twisted}}$.
\begin{lemma}\label{chiHtw}
We have
$$(\chi_{H})_{\text{twisted}}=\sum_{\substack{\gamma\text{ is a twisted sector }\\ \sigma(\gamma)=\sigma_{ij}}}\frac{1}{(j-i)4 \, \mathrm{sin}^2(\pi{\rm i} \gamma_i)} \,1^{\vee}_{\gamma}\otimes F_{\emptyset, \gamma^{*}}^{\vee}.$$
\end{lemma}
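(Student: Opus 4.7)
The plan is to apply Definition \ref{chiH} directly, extract the contribution of each twisted sector $\gamma^{*}\neq(0,0)$ to $\chi_{H}$, and recognize the resulting bilinear form as an element of $(H\otimes H^{c})^{\vee}$ in the bases of Corollary \ref{basisH} and Corollary \ref{basisH^c}.

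\emph{Computing $Td(\gamma^{*})$.} Fix a twisted sector $\gamma$ with $\sigma(\gamma)=\sigma_{ij}$, so $\gamma=\gamma_i v_i+\gamma_j v_j$ with $0<\gamma_i,\gamma_j<1$. Since $v_i=(i,1)$ and $v_j=(j,1)$ both have degree one, the degree-one condition on $\gamma$ forces $\gamma_i+\gamma_j=1$. As $\sigma(\gamma^{*})=\sigma_{ij}$ is a maximal cone of $\Sigma$, $\mathrm{Star}(\sigma(\gamma^{*}))=\sigma(\gamma^{*})$ and the second product in $Td(\gamma^{*})$ is empty. Using \eqref{ch_gamma} and $(\gamma^{*})_k=1-\gamma_k$, one computes $ch_{\gamma^{*}}(R_i^{-1})=e^{2\pi{\rm i}\gamma_i}$ and $ch_{\gamma^{*}}(R_j^{-1})=e^{-2\pi{\rm i}\gamma_i}$, so
\[
Td(\gamma^{*}) \;=\; \frac{1}{(1-e^{2\pi{\rm i}\gamma_i})(1-e^{-2\pi{\rm i}\gamma_i})} \;=\; \frac{1}{4\sin^{2}(\pi\gamma_i)}.
\]
In parallel, $|\mathrm{Box}(\sigma_{ij})|$ equals the index of $\mathbb{Z}v_i+\mathbb{Z}v_j$ in $N\cap\mathrm{Span}_{\mathbb{R}}(v_i,v_j)$, which for $v_i=(i,1)$ and $v_j=(j,1)$ is $|\det|=j-i$.

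\emph{Orthogonality of sectors.} Next I would use the module structure of Proposition \ref{newH^c} to check that the pairing matrix is block-diagonal in the twisted sectors: $1_{\gamma^{*}}\in H_{\gamma^{*}}$ acts as the identity on the one-dimensional piece $H^{c}_{\gamma^{*}}=\mathbb{C}F_{\emptyset,\gamma^{*}}$ and annihilates every $F_{\emptyset,\delta}$ with $\delta\neq\gamma^{*}$, since such $\delta$ sit in different cones or are not compatible with $\gamma^{*}$ under the multiplication. With the fundamental-class convention $\int_{\gamma^{*}}F_{\emptyset,\gamma^{*}}=1$ coming from the induced fan of \cite{BorisovHorja}, the sum in Definition \ref{chiH} collapses to
\[
\chi_{H}(1_{\gamma},F_{\emptyset,\gamma^{*}}) \;=\; \frac{1}{|\mathrm{Box}(\sigma_{ij})|}\int_{\gamma^{*}}Td(\gamma^{*})\,1_{\gamma^{*}}F_{\emptyset,\gamma^{*}} \;=\; \frac{1}{(j-i)\,4\sin^{2}(\pi\gamma_i)}.
\]
Moreover, the matrix elements $\chi_{H}(1_{(0,0)},F_{\emptyset,\gamma^{*}})$ and $\chi_{H}(D_k,F_{\emptyset,\gamma^{*}})$ vanish: the dualities $1_{(0,0)}^{*}=1_{(0,0)}$ and $D_k^{*}=-D_k$ live in $H_{(0,0)}$, and the resulting products $1_{(0,0)}F_{\emptyset,\gamma^{*}}$ or $D_kF_{\emptyset,\gamma^{*}}$ contribute no top-degree term to the sector $\gamma^{*}$ against $Td(\gamma^{*})$ because $H^{c}_{\gamma^{*}}$ is already concentrated in its top degree and is one-dimensional. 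Summing the surviving contributions over all twisted sectors yields the stated expression.

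\emph{Main obstacle.} The orthogonality step is the only delicate point; once I verify with the conventions of \cite{BorisovHorja} that $F_{\emptyset,\gamma^{*}}$ is the class whose sectoral integral is $1$ and that no cross-sector interactions survive the module multiplication, the remainder of the argument is the mechanical Todd-class computation above.
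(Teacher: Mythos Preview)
Your approach is essentially the same as the paper's: both compute the Todd class of the twisted sector via \eqref{ch_gamma} to obtain $\frac{1}{4\sin^{2}(\pi\gamma_i)}$, divide by $|\mathrm{Box}(\sigma_{ij})|=j-i$, and read off $\chi_H(1_\gamma,F_{\emptyset,\gamma^*})$; you are in fact more careful than the paper in tracking that the relevant summand is $\gamma^*$ (so that $Td(\gamma^*)$ is what appears), though the value coincides with $Td(\gamma)$ by the symmetry $\sin^2(\pi\gamma_i)=\sin^2(\pi(1-\gamma_i))$. One small point: your ``top-degree'' justification for the vanishing of $1_{(0,0)}F_{\emptyset,\gamma^*}$ and $D_kF_{\emptyset,\gamma^*}$ is not quite the right mechanism---the actual reason is that $ch$ and $ch^c$ in Proposition~\ref{definech,ch^c} are algebra/module isomorphisms onto the \emph{direct sum} $\bigoplus_\gamma H_\gamma$ with componentwise multiplication, so an element supported in $H_{(0,0)}$ automatically annihilates $H^c_{\gamma^*}$; once you phrase it this way the orthogonality you flag as the main obstacle becomes immediate.
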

\begin{proof}
For  $\gamma\neq(0,0)$
and $\sigma(\gamma)=\sigma_{ij}$, we have
$$Td(\gamma)=(\frac{1}{1-e^{-2\pi {\rm i} \gamma_i}})(\frac{1}{1-e^{-2\pi {\rm i} \gamma_j}})=\frac{1}{4\, \mathrm{sin}^2(\pi{\rm i} \gamma_i)}.$$
Therefore, by Definition \ref{chiH},
$$
\chi_{H,\gamma^*} (1_{\gamma}, F_{\emptyset,\gamma^*}) = \frac 1{(j-i)}\int_\gamma \frac{1}{4\, \mathrm{sin}^2(\pi{\rm i} \gamma_i)} 1_{\gamma^*} F_{\emptyset,\gamma^*} = \frac{1}{(j-i)4\, \mathrm{sin}^2(\pi{\rm i} \gamma_i)}
$$
and the result follows.
\end{proof}

\begin{lemma}\label{chiH00}
We have $$(\chi_{H})_{(0,0)}=1_{(0,0)}^{\vee}\otimes F^{\vee}- \sum_{0<k,l<n}m_{kl}D_{i_k}^{\vee}\otimes F_{i_l}^{\vee},$$
where \begin{equation*}
	m_{kl}=\left\{
	\begin{array}{l}
	0,\text{ if }k\not=l,l-1,l+1\\
	\frac{1}{|i_l-i_k|},\text{ if }|l-k|=1\\
	-\frac{1}{i_l-i_{l-1}}-\frac{1}{i_{l+1}-i_{l}},\text{ if }l=k.\\
	\end{array}
	\right.
	\end{equation*}
\end{lemma}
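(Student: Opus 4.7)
The plan is to reduce the pairing to a direct computation on the bases of $H_{(0,0)}$ and $H^c_{(0,0)}$ once we show that the Todd factor trivializes. For $\gamma=(0,0)$ we have $\sigma(\gamma)=\emptyset$, so $|\mathrm{Box}(\sigma(\gamma))|=1$ and
$$Td((0,0))=\prod_{i\in\Sigma}\frac{D_i}{1-e^{-D_i}}=\prod_{i\in\Sigma}\Bigl(1+\tfrac12 D_i+O(D_i^2)\Bigr).$$
By the proof of Proposition \ref{newH}, every product of two positive-degree classes vanishes in $H_{(0,0)}$, so only the constant and linear terms in this expansion survive. The linear term is $\tfrac12\sum_{i\in\Sigma}D_i$, which is zero by the toric relation used in Proposition \ref{newH}. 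Hence $Td((0,0))=1$ and the Euler pairing collapses to $\chi_{H,(0,0)}(\alpha,\beta)=\int_{(0,0)}\alpha^{*}\beta$.

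Next I would evaluate this on the bases $\{1_{(0,0)},D_{i_k}\}$ and $\{F,F_{i_l}\}$ from Corollaries \ref{basisH} and \ref{basisH^c}. Because $F=|j-i|\,F_{\{i,j\}}$ and $D_kF_{\{i,j\}}=0$ for every $k$ (Proposition \ref{F_ij}), we have $D_iF=0$, so the only pairing with $F$ that survives is $\chi_H(1_{(0,0)},F)=\int F=1$, with the normalization inherited from the computation in Lemma \ref{chiHtw}. Pairings of $1_{(0,0)}$ against any $F_{i_l}$ vanish since $F_{i_l}$ is not a top class. It remains to compute $\chi_H(D_{i_k},F_{i_l})=-\int D_{i_k}F_{i_l}$, the sign coming from $(D_{i_k})^{*}=-D_{i_k}$. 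Proposition \ref{F_ij} gives $D_{i_k}F_{i_l}=\tfrac{1}{|i_l-i_k|}F$ when $|k-l|=1$, gives $D_{i_k}F_{i_k}=-\bigl(\tfrac{1}{i_k-i_{k-1}}+\tfrac{1}{i_{k+1}-i_k}\bigr)F$, and gives $0$ when $|k-l|>1$ since $i_k,i_l$ are then non-adjacent.

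Reading these values off and matching to the definition of $m_{kl}$ yields $\chi_H(D_{i_k},F_{i_l})=-m_{kl}$ in every case, together with the single top contribution $\chi_H(1_{(0,0)},F)=1$ and the vanishings $\chi_H(1_{(0,0)},F_{i_l})=\chi_H(D_{i_k},F)=0$. Packaging these values as an element of $(H_{(0,0)}\otimes H^c_{(0,0)})^{\vee}$ gives exactly the claimed formula. The one place where care is needed is sign-bookkeeping from the involution $*$ combined with the asymmetric sign in $D_{i_k}F_{i_k}$; this is where I expect most of the attention to go, but there is no genuine obstacle since the collapse of the Todd expansion is forced by the degree-two vanishing in $H_{(0,0)}$.
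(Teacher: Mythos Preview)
Your proposal is correct and follows essentially the same approach as the paper: trivialize $Td((0,0))$ via $D_iD_j=0$ and $\sum_i D_i=0$, then reduce $\chi_{H,(0,0)}$ to evaluating $\int_{(0,0)}\alpha^*\beta$ on the bases and read off the entries $m_{kl}=\int D_{i_k}F_{i_l}$ from Proposition~\ref{F_ij}. The only cosmetic difference is that the paper computes on general elements $\alpha,\beta$ rather than basis vectors, but the content is identical.
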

\begin{proof}
When $\gamma=(0,0)$, we have $$Td(\gamma)=\prod_{i\in \Sigma}\frac{D_i}{1-ch(R_i^{-1})}=\prod_{i\in \Sigma}\frac{D_i}{1-e^{-D_i}}=\prod_{i\in \Sigma}(1+\frac{D_i}{2})=1+\frac{1}{2}\sum_{i\in \Sigma}D_i=1.$$

Let $\alpha=\sum_{i\in \Sigma^{\circ}}s_iD_i+s1_{(0,0)}\in H_{(0,0)}$ and $\beta=\sum_{j\in \Sigma^{\circ}}t_jF_j+tF\in H^c_{(0,0)}$.
Let $\Sigma_{(1)}=\{i_0=0,i_1,\cdots,i_r,i_{r+1}=n\}$, where $i_0<\ldots<i_{r+1}=n$. We get
\begin{equation}
\begin{split}
\chi_{H,(0,0)}(\alpha,\beta)=&\chi_{H,(0,0)}\big((s1_{(0,0)}-\sum_{i\in \Sigma^{\circ}}s_iD_i)(tF+\sum_{j\in \Sigma^{\circ}}t_jF_j)\big)\\
=&st-\sum_{i,j \text{ are adjacent in } \Sigma^{\circ}}s_it_j\int_{\gamma} D_iF_j-\sum_{i\in \Sigma^{\circ}}s_it_i\int_{\gamma}D_iF_i\\
=&st-\sum_{k,l |k-l|=1 }s_{i_k}t_{i_l}\int_{\gamma}D_{i_k}F_{i_l}-\sum_{k}s_{i_k}t_{i_k}\int_{\gamma}D_{i_k}F_{i_k}.
\end{split}
\end{equation}

Thus, in the basis $1_{(0,0)}^{\vee}, D_{i_1}^{\vee},\ldots, D_{i_r}^{\vee}$ of $H^{\vee}$ and basis $F^{\vee}, F^{\vee}_{i_1},\ldots,F^{\vee}_{i_r}$ of $H^{c,\vee}$, we can express $\chi_{H,(0,0)}$ as the matrix
\begin{equation*}
	\begin{pmatrix}
	1 & 0  \\
	0 & -M
\end{pmatrix},
	\end{equation*}
where the entries in $M$ are computed by Proposition \ref{F_ij}
\begin{equation*}
	m_{kl}=\int_{\gamma}D_{i_k}F_{i_l}=\left\{
	\begin{array}{l}
	0,\text{ if }k\not=l,l-1,l+1\\
	\frac{1}{|i_l-i_k|},\text{ if }|l-k|=1\\
	-\frac{1}{i_l-i_{l-1}}-\frac{1}{i_{l+1}-i_{l}},\text{ if }l=k.\\
	\end{array}
	\right.
	\end{equation*}
We have
\begin{equation*}
	M=\begin{pmatrix}
	-\frac{1}{i_2-i_1}-\frac{1}{i_1-i_0} & \frac{1}{i_2-i_1} & 0 & \ldots \\
	\frac{1}{i_2-i_1} & -\frac{1}{i_3-i_2}-\frac{1}{i_2-i_1} & \frac{1}{i_3-i_2} & \ldots \\
	0 & \frac{1}{i_3-i_2} & -\frac{1}{i_4-i_3}-\frac{1}{i_3-i_2} & \ldots \\
	\vdots & \vdots & \vdots & \ddots
	\end{pmatrix}.
	\end{equation*}
\end{proof}

In what follows, we will need the inverse of $M$, which has a surprisingly simple form.
\begin{lemma}\label{inversM}
The inverse of $M$ is $G=(g_{kl})$, where $g_{kl}=-\frac{i_k}{n}(n-i_l)$ if $k\leq l$ and $g_{kl}=-\frac{i_l}{n}(n-i_k)$ if $k\geq l$.
\end{lemma}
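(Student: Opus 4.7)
The plan is to verify directly that $MG = I_{r \times r}$ by exploiting the tridiagonal structure of $M$. Since $M$ is symmetric and the proposed $G$ is also symmetric, it suffices to check $(MG)_{kl}$ for $k \leq l$, which reduces to three cases: the diagonal case $k = l$ and the two off-diagonal strips $k < l$ (and by symmetry $k > l$).

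Conceptually, I would first observe that $M$ is the weighted discrete Laplacian on the graph with vertices $i_1,\ldots,i_r$ sitting inside the interval $[i_0,i_{r+1}] = [0,n]$, with edge weights given by reciprocal spacings. The proposed $G$ is precisely the discrete Green's function for this Laplacian with Dirichlet boundary conditions, which matches the continuous Green's function $G(x,y) = -\tfrac{x(n-y)}{n}$ for $x \leq y$ evaluated at lattice points. This observation is the structural reason one should expect the formula, and it also explains why boundary rows work out: extending $G$ by $g_{0,l} = g_{r+1,l} = 0$ (which is consistent with $i_0 = 0$ and $i_{r+1} = n$) makes the Dirichlet boundary automatic, so the boundary rows of $MG$ behave identically to the interior rows.

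For the direct check, for $1 \leq k \leq r$ with $1 \leq k < l$ I compute
\[
(MG)_{kl} = \frac{1}{i_k-i_{k-1}}\,g_{k-1,l} + \Bigl(-\tfrac{1}{i_k-i_{k-1}} - \tfrac{1}{i_{k+1}-i_k}\Bigr) g_{kl} + \tfrac{1}{i_{k+1}-i_k}\,g_{k+1,l},
\]
and since all three entries $g_{k-1,l}, g_{kl}, g_{k+1,l}$ lie in the regime $\bullet \leq l$ they share a common factor $-(n-i_l)/n$; factoring it out collapses the bracket to $\tfrac{(i_k-i_{k-1})}{i_k-i_{k-1}} + \tfrac{(i_k-i_{k+1})}{i_{k+1}-i_k} = 1 - 1 = 0$. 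For $k = l$ the middle entry $g_{kk}$ belongs to both regimes while $g_{k-1,k}$ uses the $\leq$ formula and $g_{k+1,k}$ the $\geq$ formula; collecting the factor $1/n$ yields $\tfrac{1}{n}\bigl[(n-i_k) + i_k\bigr] = 1$. The boundary cases $k = 1$ and $k = r$ are handled by the same calculation, reading $g_{0,l} = 0$ and $g_{r+1,l} = 0$ respectively.

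Since the only content is a bookkeeping verification already reducing to simple algebraic identities, I do not anticipate a serious obstacle; the slight care needed is uniformity of the formula across the off-diagonal regions and the two boundary rows, which is handled uniformly by the Green's function viewpoint explained above.
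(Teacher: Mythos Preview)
Your verification is essentially the same direct check as the paper's: compute the $k$-th row of $M$ against the $l$-th column of $G$ in the three cases $l>k$, $l<k$, $l=k$. One small caution on wording: symmetry of $M$ and $G$ alone does \emph{not} imply that $MG$ is symmetric, so it does not by itself reduce the verification to $k\le l$; what actually makes the $k>l$ case immediate is that the explicit computation is the mirror of the $k<l$ case (factor out $-i_l/n$ in place of $-(n-i_l)/n$), which your parenthetical already signals. The discrete Green's function interpretation and the uniform handling of the boundary rows via the extension $g_{0,l}=g_{r+1,l}=0$ are nice touches not present in the paper's proof.
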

\begin{proof}
The $k-$th row of $M$ is $$(0,\ldots,0, \frac{1}{i_k-i_{k-1}} , -\frac{1}{i_{k+1}-i_k}-\frac{1}{i_k-i_{k-1}} , \frac{1}{i_{k+1}-i_k},0,\ldots,0 )$$ and the $l-$th column of $G$ is $$-(*,\frac{i_{l-1}}{n}(n-i_l),\frac{i_{l}}{n}(n-i_l),\frac{i_{l}}{n}(n-i_{l+1}),*)^{T}.$$
If $l\geq k+1$, the $k-$th row of $M$ multiplied with the $l-$th column of $G$ equals
$$
\hskip -10pt\begin{array}{l}
{-\frac{1}{i_k-i_{k-1}}\frac{i_{k-1}}{n}(n-i_l)-(-\frac{1}{i_{k+1}-i_k}-\frac{1}{i_k-i_{k-1}})
\frac{i_{k}}{n}(n-i_l)-\frac{1}{i_{k+1}-i_k}\frac{i_{k+1}}{n}(n-i_l)=0.}
\end{array}
$$
If $l\leq k-1$, the $k-$th row of $M$ multiplied with the $l-$th column of $G$ equals
$$
\hskip -10pt\begin{array}{l}
{-\frac{1}{i_k-i_{k-1}}\frac{i_{l}}{n}(n-i_{k-1})-(-\frac{1}{i_{k+1}-i_k}-\frac{1}{i_k-i_{k-1}})
\frac{i_{l}}{n}(n-i_k)-\frac{1}{i_{k+1}-i_k}\frac{i_{l}}{n}(n-i_{k+1})=0.}
\end{array}
$$
If $l= k$, the $k-$th row of $M$ multiplied with the $l-$th column of $G$ equals
$$
\hskip -10pt\begin{array}{l}
{-\frac{1}{i_k-i_{k-1}}\frac{i_{k-1}}{n}(n-i_{k})-(-\frac{1}{i_{k+1}-i_k}-\frac{1}{i_k-i_{k-1}})
\frac{i_{k}}{n}(n-i_k)-\frac{1}{i_{k+1}-i_k}\frac{i_{k}}{n}(n-i_{k+1})=1.}
\end{array}
$$
\end{proof}
The formula for $\chi_{H}$ leads to the following formula for the inverse $\chi_{H}^{-1} \in H\otimes H^c$.
\begin{theorem}\label{chiH-1}
We have
\begin{equation}
\begin{split}
\chi_{H}^{-1}=&\sum_{\gamma\neq(0,0),\sigma(\gamma)=\sigma_{ij}}(j-i)4\,\mathrm{sin}^{2}\pi {\rm i} \gamma_i 1_{\gamma}\otimes F_{\emptyset, \gamma^{*}}\\
+&1_{(0,0)}\otimes F-\sum_{0<k,l<n}g_{kl}D_{i_k}\otimes F_{i_l},
\end{split}
\end{equation}
where $g_{kl}=-\frac{i_k}{n}(n-i_l)$ if $k\leq l$ and $g_{kl}=-\frac{i_l}{n}(n-i_k)$ if $k\geq l$.
\end{theorem}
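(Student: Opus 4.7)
The plan is to exploit the block structure of $\chi_H$ induced by the twisted-sector decomposition of $H$ and $H^c$, and then invert each block.

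First, I would observe that since $H = \bigoplus_\gamma H_\gamma$ and $H^c = \bigoplus_\gamma H_\gamma^c$ as rings/modules built out of quotients indexed by the cone $\sigma(\gamma)$, the product of an element of $H_{\gamma_1}$ with one in $H_{\gamma_2}^c$ lives in $H \cdot H^c$ only through the sector $\gamma_1 = \gamma_2$. Combined with the fact that the duality involution $*$ sends $H_\gamma$ to $H_{\gamma^*}$, the pairing $\chi_H(\alpha,\beta) = \chi_H(\alpha^*\beta)$ vanishes whenever $\alpha \in H_{\gamma_1}$ and $\beta \in H_{\gamma_2}^c$ with $\gamma_2 \neq \gamma_1^*$. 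Hence, with respect to the basis of $H$ from Corollary \ref{basisH} and the basis of $H^c$ from Corollary \ref{basisH^c}, the matrix of $\chi_H$ is block diagonal, with one block for each twisted sector $\gamma$ pairing $H_{\gamma^*} \otimes H_\gamma^c$.

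Next, I would invert each block separately. For $\gamma \neq (0,0)$ with $\sigma(\gamma) = \sigma_{ij}$, both $H_{\gamma^*} = \mathbb{C}\,1_{\gamma^*}$ and $H_\gamma^c = \mathbb{C} F_{\emptyset,\gamma}$ are one-dimensional, and Lemma \ref{chiHtw} identifies the block as the scalar $\frac{1}{(j-i)\,4\sin^2(\pi {\rm i}\,\gamma_i)}$. Inverting this scalar and re-indexing the sum via $\gamma \leftrightarrow \gamma^*$ (which preserves $\sigma(\gamma)$ and the value of $\sin^2(\pi{\rm i}\,\gamma_i)$) produces exactly the first summand of the claimed $\chi_H^{-1}$. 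For $\gamma = (0,0)$, Lemma \ref{chiH00} says that in the bases $(1_{(0,0)}, D_{i_1},\ldots,D_{i_r})$ and $(F, F_{i_1},\ldots,F_{i_r})$ the block has the form $\bigl(\begin{smallmatrix} 1 & 0\\ 0 & -M\end{smallmatrix}\bigr)$, so its inverse is $\bigl(\begin{smallmatrix} 1 & 0\\ 0 & -M^{-1}\end{smallmatrix}\bigr)$. By Lemma \ref{inversM}, $M^{-1} = G = (g_{kl})$ with the stated piecewise formula, which yields the second and third summands.

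Finally, I would assemble the inverse matrix block by block to write $\chi_H^{-1} \in H \otimes H^c$ in the chosen bases, obtaining the formula in the statement.

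The only place that requires any care is the first step, namely verifying that the matrix is genuinely block diagonal and that the twisted-sector block at $\gamma$ pairs $H_{\gamma^*}$ with $H_\gamma^c$ (not $H_\gamma$ with $H_\gamma^c$); once this is cleanly set up, steps 2 and 3 are elementary because Lemmas \ref{chiHtw}, \ref{chiH00}, and \ref{inversM} have already done the computational work. Thus the bulk of the proof reduces to assembling these lemmas and citing them in the right order.
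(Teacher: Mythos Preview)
Your proposal is correct and follows the same approach as the paper: the paper's proof is simply ``Follows from Lemma \ref{chiHtw}, Lemma \ref{chiH00} and Lemma \ref{inversM},'' and your three steps amount to exactly this, with the block-diagonal structure made explicit rather than left implicit. The re-indexing $\gamma\leftrightarrow\gamma^*$ you mention is harmless (and indeed unnecessary) since Lemma \ref{chiHtw} already records the pairing as $1_\gamma^\vee\otimes F_{\emptyset,\gamma^*}^\vee$, so inverting the scalar directly gives the term $1_\gamma\otimes F_{\emptyset,\gamma^*}$ in the statement.
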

\begin{proof}
Follows from Lemma \ref{chiHtw}, Lemma \ref{chiH00} and Lemma \ref{inversM}.
\end{proof}

\section{Proof of duality conjectures}\label{pluginGamma}
In this section, we consider Gamma series isomorphism between the solutions to the better-behaved GKZ systems and the
$K$-theory spaces  given in \cite{BorisovHorja}. In the case of $\mathrm{rk \,N}=2$, we prove Conjectures \ref{7.3} and \ref{7.1} which are the duality conjectures formulated in \cite{BorisovHorja}.

\smallskip
The key ingredients of Conjecture \ref{7.3} are the Gamma series $\Gamma$ and $\Gamma^{\circ}$ which give solutions to $bbGKZ(C,0)$ and $bbGKZ(C^{\circ},0)$ with values in $\oplus_{\gamma}H_{\gamma}$ and $\oplus_{\gamma}H_{\gamma}^c$ respectively. The series converge absolutely and uniformly on compacts in some region of $(x_i)$, see \cite{BorisovHorja,BorisovHorja2}.
\begin{definition}\label{gammac}
\cite{BorisovHorja}
	Consider for each $c\in C$ and each twisted sector $\gamma=\sum_i \gamma_i v_i$ the set $L_{c,\gamma}$ of $(l_i)\in\mathbb{Q}^n$ such that $\sum_i l_i v_i=-c$ and $l_i-\gamma_i\in\mathbb{Z}$. Define
	\begin{equation*}
	(\Gamma(x_1,\cdots,x_n))_c=\bigoplus_{\gamma}\sum_{(l_i)\in L_{c,\gamma}}\prod_{i=1}^n\frac{x_i^{l_i+\frac{D_i}{2\pi i}}}{\Gamma(1+l_i+\frac{D_i}{2\pi i})}.
	\end{equation*}
	Moreover, if $c\in C^{\circ}$, define the set $\sigma=\{i|l_i<0\}$. Define
	\begin{equation*}
	(\Gamma^{\circ}(x_1,\cdots,x_n))_c=\bigoplus_{\gamma}\sum_{(l_i)\in L_{c,\gamma}}\prod_{i=1}^n\frac{x_i^{l_i+\frac{D_i}{2\pi i}}}{\Gamma(1+l_i+\frac{D_i}{2\pi i})}(\prod_{i\in\sigma}D_i^{-1})F_{\sigma}
	\end{equation*}
	with $D_i=\log ch_{\gamma}(R_ie^{-2\pi i\gamma_i})$.
\end{definition}

\begin{proposition}\label{gammasolution}
\cite{BorisovHorja}
Let $bbGKZ(C,0)$ and $bbGKZ(C^{\circ},0)$ be the spaces of solutions to the corresponding better behaved hypergeometric systems. Then the Gamma series functions $\Gamma: K_0(\mathbb{P}_{\Sigma})^{\vee} \rightarrow bbGKZ(C,0)$ and $\Gamma^{\circ}: K^c_0(\mathbb{P}_{\Sigma})^{\vee} \rightarrow bbGKZ(C^{\circ},0)$ are isomorphisms of linear spaces.
\end{proposition}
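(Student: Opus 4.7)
The plan is to split the proof into two parts: (a) a direct verification that each component of the Gamma series satisfies the appropriate bbGKZ equations, and (b) a dimension count together with an injectivity argument that upgrades (a) into an isomorphism.

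For (a), I would fix a twisted sector $\gamma$ and work coefficient-by-coefficient inside $H_\gamma$ (resp.\ $H_\gamma^c$). Differentiating the summand $\prod_i x_i^{l_i+D_i/(2\pi{\rm i})}/\Gamma(1+l_i+D_i/(2\pi{\rm i}))$ with respect to $x_j$ produces the factor $(l_j+D_j/(2\pi{\rm i}))x_j^{-1}$; combining this with the functional equation $\Gamma(z+1)=z\Gamma(z)$ and the bijection $L_{c,\gamma}\to L_{c+v_j,\gamma}$ sending $(l_i)\mapsto(l_i-\delta_{ij})$, the series rearranges into $(\Gamma)_{c+v_j}$, establishing $\partial_j\Phi_c=\Phi_{c+v_j}$. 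The Euler relations $\sum_i\mu(v_i)x_i\partial_i\Phi_c+\mu(c)\Phi_c=0$ then drop out because the exponent of $x_i$ in each summand is $l_i+D_i/(2\pi{\rm i})$, so applying $\sum_i\mu(v_i)x_i\partial_i$ contributes $\sum_i\mu(v_i)(l_i+D_i/(2\pi{\rm i}))=-\mu(c)+(2\pi{\rm i})^{-1}\sum_i\mu(v_i)D_i$, and the cohomological piece vanishes by the relations of Definition \ref{oldH} after properly accounting for the definition $D_i=\log ch_\gamma(R_ie^{-2\pi{\rm i}\gamma_i})$ for indices in $\sigma(\gamma)$. The case of $\Gamma^\circ$ is parallel, with extra bookkeeping that a shift of $(l_i)$ may move an index into or out of $\sigma=\{i\mid l_i<0\}$; the relations $D_iF_I=F_{I\cup\{i\}}$ or $D_iF_I=0$ of Definition \ref{oldH^c} guarantee that the derivatives land in the correct component of $H_\gamma^c$.

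For (b), by Remark \ref{remdimn} the dimensions of $bbGKZ(C,0)$ and $bbGKZ(C^\circ,0)$ both equal the normalized volume of $\Delta$. The Chern character isomorphisms of Proposition \ref{definech,ch^c} identify $K_0(\mathbb{P}_\Sigma)$ and $K_0^c(\mathbb{P}_\Sigma)$ with $\oplus_\gamma H_\gamma$ and $\oplus_\gamma H_\gamma^c$ respectively, and a standard toric computation shows that their common dimension also equals the normalized volume. Hence the source and target of $\Gamma$ and $\Gamma^\circ$ have matching dimension, and it suffices to establish injectivity. I would do this by a leading-term analysis in a limit of $(x_i)$ adapted to the fan $\Sigma$ (a deep interior point of the corresponding Mori cone): the Gamma series attached to distinct basis elements of $K_0(\mathbb{P}_\Sigma)^\vee$ (resp.\ $K_0^c(\mathbb{P}_\Sigma)^\vee$) produce distinct leading monomials in the $x_i$, distinguished either by their $\gamma$-support or by their polynomial degree in $\log x_i$, and comparison against the explicit bases of Corollaries \ref{basisH} and \ref{basisH^c} rules out any kernel.

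The main obstacle is making the leading-term argument in (b) rigorous. One has to choose a change of variables that recasts the Gamma series as a genuine Frobenius-type power series with nonvanishing leading coefficient, and check that the $\log x_i$-polynomials arising from the expansion of $x_i^{D_i/(2\pi{\rm i})}$ remain linearly independent across different cohomology basis elements. The purely formal verification in (a) is routine; the real content lies in controlling convergence in the region provided by \cite{BorisovHorja, BorisovHorja2} and reading off the asymptotics there.
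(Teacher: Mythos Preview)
This proposition is not proved in the present paper: it is stated with the citation \cite{BorisovHorja} and is used as a black box. There is therefore no proof in the paper to compare your proposal against.

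Your outline is a reasonable sketch of how the result is established in \cite{BorisovHorja}. Part (a) is essentially the content of \cite[Proposition~2.6, Theorem~5.2]{BorisovHorja}; part (b), the dimension count plus a leading-term/asymptotic injectivity argument near a large-radius limit point, is indeed the mechanism used there (see \cite[Theorem~5.2, Theorem~5.14]{BorisovHorja} and \cite{BorisovHorja2}). The caveat you flag---making the leading-term argument rigorous by choosing coordinates adapted to $\Sigma$ and checking that the logarithmic terms separate basis elements---is exactly the nontrivial part, and it is handled in those references rather than here.
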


From Definition \ref{gammac}, we know $\Gamma_c\in\oplus_{\gamma}H_{\gamma}$ and $\Gamma^{\circ}_d\in\oplus_{\gamma}H^c_{\gamma}$ for $c\in C$ and $d\in C^{\circ}$. After substituting them into the pairing in Definition \ref{pairing} in Section \ref{pairingofsol}, we get $\langle\Gamma,\Gamma^{\circ}\rangle\in(\oplus_{\gamma}H_{\gamma})\otimes(\oplus_{\gamma}H^c_{\gamma})$.
We also consider the Euler characteristic pairing
\begin{equation*}
\chi:K_0(\mathbb{P}_{\Sigma})\times K_0^c(\mathbb{P}_{\Sigma})\rightarrow\mathbb{C}
\end{equation*}
which is defined in \cite{BorisovHorja}.

Now we will prove the last claim in Conjecture \ref{7.3} (Conjecture 7.3 in \cite{BorisovHorja} )in the case of $\mathrm{rk \, N}=2$, by an explicit calculation.

\smallskip

{\bf Notations.} We write the formulas in Definition \ref{gammac} as follows
\begin{equation*}
\Gamma_c=\bigoplus_{\gamma}\Gamma_c^{\gamma}=\bigoplus_{\gamma}\sum_{(l_k)\in L_{c,\gamma}}\Gamma_c^{\gamma,(l_k)},\quad \Gamma_d^{\circ}=\bigoplus_{\gamma}\Gamma_d^{\circ,\gamma}=\bigoplus_{\gamma}\sum_{(r_k)\in L_{d,\gamma}}\Gamma_d^{\circ,\gamma,(r_k)},
\end{equation*}
where $\Gamma_c^{\gamma,(l_k)}=\prod_{k=1}^n\frac{x_k^{l_k+\frac{D_k}{2\pi {\rm i}}}}{\Gamma(1+l_k+\frac{D_k}{2\pi {\rm i}})}$ and  $\Gamma_d^{\circ,\gamma,(r_k)}=\prod_{k=1}^n\frac{x_k^{l_k+\frac{D_k}{2\pi {\rm i}}}}{\Gamma(1+l_k+\frac{D_k}{2\pi {\rm i}})}(\prod_{k\in\sigma}D_k^{-1})F_{\sigma}.$
Then the terms in Definition \ref{pairing} without $\Gamma_{(0,0)}$ can be written as

\begin{equation}\label{twistno00}
-n\sum_{0\leq i<j\leq n}(j-i)x_ix_j\sum_{\substack{c,d \in \sigma_{ij} \\c+d=v_i+v_j}}\delta_{ij}^{c}\bigoplus_{\gamma,\gamma'}\sum_{\substack{(l_k)\in L_{c,\gamma} \\ (r_k)\in L_{d,\gamma'}}}\Gamma_c^{\gamma,(l_k)}\Gamma_d^{\circ,\gamma',(r_k)}.
\end{equation}

\smallskip

{\bf Main idea.} By Theorem \ref{pairingthm}, we know the pairing $\langle\Gamma,\Gamma^{\circ}\rangle$ in Definition \ref{pairing} is a constant. Therefore, in order to compute $\langle\Gamma,\Gamma^{\circ}\rangle$, we only need to calculate the constant contribution of each terms. Although there are many terms in $\langle\Gamma,\Gamma^{\circ}\rangle$, only few of them have nonzero contribution to the constant.

\medskip

First, we consider the terms of the twisted sectors, see Lemma \ref{dualtwist} and Proposition \ref{constwist}.
Second, we calculate the contribution given by the terms of the untwisted sector $\gamma=(0,0)$, see Lemma \ref{conditionuntwisted}, Lemma \ref{(2)}, Lemma \ref{(3)} and Proposition \ref{conuntwist}. Then we add these contributions to the constant together to get our result Theorem \ref{pairingforgamma}.

\medskip

{\bf Step 1.} First, we consider the terms of twisted sectors.
 Let $\gamma$ and $\gamma'$ be two twisted sectors. We consider the term $x_ix_j\Gamma_c^{\gamma,(l_k)}\Gamma_d^{\circ,\gamma',(r_k)}$ in the summation and have the following lemma.
\begin{lemma}\label{dualtwist}
For $x_ix_j\Gamma_c^{\gamma,(l_k)}\Gamma_d^{\circ,\gamma',(r_k)}$ to contribute a nonzero constant term, it should satisfy the following conditions:
\begin{enumerate}
\item $\gamma$ and $\gamma'$ are dual to each other, i.e.,$\gamma'=\gamma^{*}$;
\item $r_k=-l_k=0$ if $k\not=i,j$ and $r_k=-1-l_k$ if $k=i,j$;
\item The cone $\sigma_{ij}$ generated by $v_i$ and $v_j$ is the minimal cone in $\Sigma$ which contains the twisted sector $\gamma$;
\item $c=\gamma^{*}$ and $d=\gamma$.
\end{enumerate}
\end{lemma}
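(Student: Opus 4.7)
The plan is to use the fact, from Propositions \ref{newH} and \ref{newH^c}, that for any twisted sectors $\gamma,\gamma'\neq(0,0)$ both $H_{\gamma}$ and $H^{c}_{\gamma'}$ are one-dimensional, so every $\overline{D}_{k}$ acts as zero on the generators $1_{\gamma}$ and $F_{\emptyset,\gamma'}$. Consequently each Gamma-series summand collapses to a Laurent monomial
\begin{equation*}
\Gamma_{c}^{\gamma,(l_{k})}\,\Gamma_{d}^{\circ,\gamma',(r_{k})}\;=\;(\text{scalar})\cdot\prod_{k=0}^{n}x_{k}^{l_{k}+r_{k}}\cdot(1_{\gamma}\otimes F_{\emptyset,\gamma'}),
\end{equation*}
where the scalar prefactor packages the $\Gamma$-values together with the formal inverses $D_{k}^{-1}$ in the definition of $\Gamma^{\circ}$. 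Multiplying by $x_{i}x_{j}$ and demanding $x$-independence forces every exponent to vanish, which is precisely condition $(2)$: $l_{k}+r_{k}=-\delta_{ki}-\delta_{kj}$.

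Next I would combine $(2)$ with the fractional-part constraints $l_{k}\equiv\gamma_{k}$ and $r_{k}\equiv\gamma'_{k}\pmod{\mathbb{Z}}$ built into the definitions of $L_{c,\gamma}$ and $L_{d,\gamma'}$. Summing the two congruences gives $\gamma_{k}+\gamma'_{k}\in\mathbb{Z}$, and since both lie in $[0,1)$ this forces $\gamma_{k}+\gamma'_{k}\in\{0,1\}$. For indices $k$ in the combined support of $\gamma$ and $\gamma'$ the value must be $1$, so $\sigma(\gamma)=\sigma(\gamma')$ and $\gamma'_{k}=1-\gamma_{k}$ there; in other words $\gamma'=\gamma^{*}$, giving $(1)$. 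For $(3)$, note that in the rank-two setting any twisted sector with $\gamma\neq(0,0)$ must be supported on a two-dimensional cone $\sigma_{pq}$ (a nontrivial element of $\mathrm{Box}$ along a single ray would be $(k\gamma_{k},\gamma_{k})\notin\mathbb{Z}^{2}$). Condition $(2)$ gives $l_{k}=0$ outside $\{i,j\}$, hence $\gamma_{k}\in\mathbb{Z}$ and therefore $\gamma_{k}=0$ for all such $k$; this forces $\{p,q\}\subseteq\{i,j\}$, so $\sigma(\gamma)=\sigma_{ij}$.

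Finally, $(4)$ will follow from the relation $\sum_{k}l_{k}v_{k}=-c$. With $l_{k}=0$ outside $\{i,j\}$ and $c\in\sigma_{ij}$, write $c=a\,v_{i}+(1-a)v_{j}$ with $a\in[0,1]$ (using $\deg c=1$); then $l_{i}=-a$ and $l_{j}=a-1$, and the congruence $l_{i}\equiv\gamma_{i}\pmod{1}$ together with $a\in[0,1]$ and $\gamma_{i}\in(0,1)$ uniquely yields $a=1-\gamma_{i}$. Therefore $c=(1-\gamma_{i})v_{i}+\gamma_{i}v_{j}=\gamma^{*}$ and $d=v_{i}+v_{j}-c=\gamma$. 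The main technical subtlety I anticipate is the rigorous handling of the formal inverses $D_{k}^{-1}$ appearing in $\Gamma^{\circ}$: one needs to verify that the apparent pole is exactly compensated by the zero of $1/\Gamma(1+r_{k}+D_{k}/(2\pi\mathrm{i}))$ when $r_{k}$ is a negative integer, so that $(\prod_{k\in\sigma}D_{k}^{-1})F_{\sigma}$ evaluates to a finite nonzero multiple of $F_{\emptyset,\gamma^{*}}$ in $H^{c}_{\gamma^{*}}$. Once that is in hand, the remainder of the argument is elementary lattice arithmetic.
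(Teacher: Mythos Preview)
Your overall strategy matches the paper's, but there is a genuine gap in the treatment of condition~(2), and this gap propagates into your argument for~(3).

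The exponent--vanishing argument you give only yields the relation $l_{k}+r_{k}=-\delta_{ki}-\delta_{kj}$. That is \emph{not} condition~(2) as stated: condition~(2) asserts additionally that $l_{k}=r_{k}=0$ for $k\neq i,j$. You then invoke ``Condition~(2) gives $l_{k}=0$ outside $\{i,j\}$'' in your proof of~(3), but you have not established this stronger vanishing. What is missing is precisely the nonvanishing analysis of your ``scalar prefactor''. On the $\Gamma$ side, $\prod_{k}1/\Gamma(1+l_{k})$ vanishes whenever some $l_{k}$ is a negative integer; on the $\Gamma^{\circ}$ side one needs the analogous statement that $\{k\mid r_{k}\in\mathbb{Z}_{<0}\}=\emptyset$. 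The paper obtains both by observing that the term is a multiple of $D_{\{k\mid l_{k}\in\mathbb{Z}_{<0}\}}1_{\gamma}\otimes F_{\{k\mid r_{k}\in\mathbb{Z}_{<0}\}}$ and that any nonempty index set kills this in $H_{\gamma}$, respectively $H^{c}_{\gamma^{*}}$. You bundle exactly this issue into the ``technical subtlety'' about $D_{k}^{-1}$ that you postpone, without noticing that it is needed already for~(2) and hence for~(3).

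Once you insert that nonvanishing step, the logic can run either way: the paper shows $\{i,j\}\subseteq\sigma(\gamma)$ (if $i\notin\sigma(\gamma)$ then $l_{i},r_{i}\in\mathbb{Z}$ with $l_{i}+r_{i}=-1$, forcing a negative integer), whereas you try to show $\sigma(\gamma)\subseteq\{i,j\}$ from $l_{k}=0$. Both are fine and equivalent since $\sigma(\gamma)$ is two-dimensional, but yours requires the full~(2) first. Your arguments for~(1) and~(4) are correct and essentially the same as the paper's.
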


\begin{proof}
 By the definition of $L_{c,\gamma}$ and $L_{d,\gamma^{\prime}}$, we have $l_k\equiv\gamma_k\mod\mathbb{Z}$ and $r_k\equiv\gamma_k^{\prime}\mod\mathbb{Z}$ for all $k=0,\ldots,n$. Therefore, we get $l_k+r_k\equiv\gamma_k+\gamma_k^{\prime}\mod\mathbb{Z}$. A necessary condition for $x_ix_j\Gamma_c^{\gamma,(l_k)}\Gamma_d^{\circ,\gamma',(r_k)}$ to contribute nonzero constant is that the degrees of $x_k$ are integers for all $k$. Thus  $l_k+r_k$ are integers, which implies that $\gamma_k+\gamma_k^{\prime}$ are integers for all $k=0,\ldots,n$. Now by the condition that $\gamma,\gamma^{\prime}\in(0,1)$, we get $\gamma+\gamma^{\prime}=1$  which implies the condition $(1)$.

\smallskip
 Moreover, one easily sees that degree of $x_k$ in $x_ix_j\Gamma_c^{\gamma,(l_k)}\Gamma_d^{\circ,\gamma',(r_k)}$ equals $l_k+r_k$ if $k\not=i,j$ and $l_k+r_k+1$ if $k=i,j$. In order to get a nonzero constant, all these degrees must be $0$, which gives
	\begin{equation}\label{*}
	r_k=-l_k\text{ if }k\not=i,j,\quad r_k=-1-l_k\text{ if }k=i,j.
	\end{equation}
Note that after expanding $x_ix_j\Gamma_c^{\gamma,(l_i)}\Gamma_d^{\circ,\gamma',(r_i)}$, every term in it is a multiple of $D_{\{k|l_k\in\mathbb{Z}_{<0}\}}\, 1_{\gamma}\otimes F_{\{k|r_k\in\mathbb{Z}_{<0}\}}\, 1_{\gamma^{*}}$. By relations in $H_{\gamma}$ and $H_{\gamma}^c$, we know that the term is nonzero only when both $\{k|l_k\in\mathbb{Z}_{<0}\}$ and $\{k|r_k\in\mathbb{Z}_{<0}\}$ are empty, i.e., all $l_k$ and $r_k$ are non-negative. Together with Equation \eqref{*}, we get $l_k=r_k=0$ if $k\not=i,j$, which implies condition $(2)$.

\smallskip
Let the cone $\sigma_{pq}$ with rays $v_p$ and $v_q$ be the minimal cone of $\Sigma$ that contains $\gamma$ for $p<q$ and $p,q\in\{0,\ldots,n\}$. We write $\gamma=\gamma_pv_p+\gamma_qv_q$, where $0<\gamma_p,\gamma_q<1$. If $i\notin\{p,q\}$, we have $\gamma_i=0$. Since $l_i\equiv\gamma_i\mod\mathbb{Z}$, we get $l_i\in \mathbb{Z}$. Similarly, we have $r_i\in \mathbb{Z}$. Since $r_i=-1-l_i$, then either $r_i<0$ or $l_i<0$. Thus we have either $\{k|l_k\in\mathbb{Z}_{<0}\}\neq\emptyset$ or $\{k|r_k\in\mathbb{Z}_{<0}\}\neq\emptyset$. Then the corresponding term $D_{\{k|l_k\in\mathbb{Z}_{<0}\}}\, 1_{\gamma}\otimes F_{\{k|r_k\in\mathbb{Z}_{<0}\}}\, 1_{\gamma^{*}}$ is zero which cannot have nonzero contribution.
Thus we need $i\in\{p,q\}$. Similarly, we need $j\in\{p,q\}$. Therefore we need $\{i,j\}=\{p,q\}$, which implies $(3)$. Now we write $\gamma=\gamma_iv_i+\gamma_jv_j$, where $0<\gamma_i,\gamma_j<1$.

\smallskip
	
Now it follows from condition $(2)$ that $-c=l_iv_i+l_jv_j$. Since $l_i\equiv\gamma_i\mod\mathbb{Z}$ and $l_j\equiv\gamma_j\mod\mathbb{Z}$, we have $-c\equiv\gamma\mod\mathbb{Z}v_i+\mathbb{Z}v_j$. This implies $c\equiv\gamma^{*}\mod\mathbb{Z}v_i+\mathbb{Z}v_j$. Then $c=\gamma^{*}+av_i+bv_j=(1-\gamma_i+a)v_i+(1-\gamma_j+b)v_j$ for some $a,b\in\mathbb{Z}$, where $0 < \gamma_i,\gamma_j < 1$. In the definition of the pairing $\langle\Gamma,\Gamma^{\circ}\rangle$, we require that $c$ lies inside the cone $\sigma_{ij}$. So $a,b\in\mathbb{Z}_{\geq0}$. Also since the second coordinates of $c,v_i, v_j$ are all $1$, so we have $a=b=0$. This implies $c=\gamma^{*}$. Since $c+d=v_i+v_j$, we get $d=\gamma$, which implies condition $(4)$.
	

\end{proof}

\begin{proposition}\label{constwist}
The constant contribution of the term in Definition \ref{pairing} without $\Gamma_{(0,0)}$ is
\begin{equation*}
-n\sum_{\substack{\gamma\text{ is a twisted sector }\\ \sigma(\gamma)=\sigma_{ij}}}(j-i)\frac{\sin^{2}(\gamma_i\pi)}{\pi^2}1_{\gamma}\otimes F_{\emptyset,\gamma^{*}}
\end{equation*}
\end{proposition}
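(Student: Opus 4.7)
The plan is to invoke Lemma \ref{dualtwist} to isolate, for each dual pair of twisted sectors $(\gamma,\gamma^{*})$ with $\sigma(\gamma)=\sigma_{ij}$, the unique admissible multi-indices $(l_k)$ and $(r_k)$ that produce a constant term, and then to carry out the resulting elementary calculation. I first fix a twisted sector $\gamma$ with $\sigma(\gamma)=\sigma_{ij}$ and write $\gamma=\gamma_i v_i+\gamma_j v_j$ with $0<\gamma_i,\gamma_j<1$. Because $v_k=(k,1)$ and $\gamma\in N=\mathbb{Z}^2$, the second-coordinate condition forces $\gamma_i+\gamma_j=1$, so in particular $\sin(\pi\gamma_j)=\sin(\pi\gamma_i)$. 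Lemma \ref{dualtwist} then pins down $\gamma'=\gamma^{*}$, $c=\gamma^{*}$, $d=\gamma$, $(l_i,l_j)=(\gamma_i-1,\gamma_j-1)$, $(r_i,r_j)=(-\gamma_i,-\gamma_j)$, with all other entries zero; and $\delta_{ij}^{c}=1$ since $c=\gamma^{*}$ lies strictly inside $\sigma_{ij}$.

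I next substitute into the two Gamma series. By Proposition \ref{newH}, $H_\gamma=\mathbb{C}\,1_\gamma$ for any twisted sector, so the classes $D_k$ act as zero on the coefficient space and the Gamma factor becomes a scalar, giving
\[
\Gamma_{\gamma^{*}}^{\gamma,(l_k)}=\frac{x_i^{\gamma_i-1}x_j^{\gamma_j-1}}{\Gamma(\gamma_i)\Gamma(\gamma_j)}\,1_\gamma.
\]
For $\Gamma^{\circ}$, the subset $\sigma=\{k\mid r_k<0\}=\{i,j\}$ coincides with $\sigma(\gamma^{*})$, and the formal symbol $(D_i^{-1}D_j^{-1})F_{\{i,j\}}$ is, consistent with its role in Definition \ref{gammac} when the $D_k$'s cancel vanishing Gamma factors at negative-integer arguments, to be read as the single generator $F_{\emptyset,\gamma^{*}}$ of the one-dimensional module $H^c_{\gamma^{*}}$. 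Hence
\[
\Gamma_{\gamma}^{\circ,\gamma^{*},(r_k)}=\frac{x_i^{-\gamma_i}x_j^{-\gamma_j}}{\Gamma(1-\gamma_i)\Gamma(1-\gamma_j)}\,F_{\emptyset,\gamma^{*}}.
\]

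Multiplying by the prefactor $-n(j-i)x_i x_j$ from \eqref{twistno00}, every $x_k$ ends up with exponent zero, so the resulting expression is genuinely constant. Applying the reflection identity $\Gamma(z)\Gamma(1-z)=\pi/\sin(\pi z)$ and using $\sin(\pi\gamma_j)=\sin(\pi\gamma_i)$ collapses the four Gamma factors to $\sin^{2}(\pi\gamma_i)/\pi^{2}$, and summing over all twisted sectors $\gamma$ yields the claimed formula. The main point requiring care is the interpretation of $(\prod_{k\in\sigma}D_k^{-1})F_\sigma$ in the twisted-sector regime, where the relevant $D_k$'s vanish on the coefficient module; once this is fixed so that the factor reproduces the generator $F_{\emptyset,\gamma^{*}}$, everything else is bookkeeping of exponents together with a single application of the reflection formula.
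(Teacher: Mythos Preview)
Your argument is correct and follows essentially the same route as the paper: both invoke Lemma \ref{dualtwist} to pin down the unique admissible $(l_k),(r_k)$, both use that $H_\gamma$ and $H^c_{\gamma^*}$ are one-dimensional so the $D_k$ act trivially, and both finish with the reflection formula and $\gamma_i+\gamma_j=1$. The only cosmetic difference is that you compute the two Gamma-series factors separately and make the interpretation of $(\prod_{k\in\sigma}D_k^{-1})F_\sigma$ as $F_{\emptyset,\gamma^*}$ explicit, whereas the paper reaches the same endpoint via a first-order Taylor expansion in the $D_k$'s that collapses immediately; you also record $\delta_{ij}^{c}=1$ explicitly, which the paper leaves implicit.
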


\begin{proof}
The constant contribution of the term in Definition \ref{pairing} without $\Gamma_{(0,0)}$ is Equation \eqref{twistno00}.
By Lemma \ref{dualtwist}, the only part of the summation is Equation \eqref{twistno00} that has nonzero contribution to constant is the following
\begin{equation}\label{summationfortwistedsector}
-n\sum_{\{i,j\}\in\Sigma}(j-i)\sum_{\gamma\text{ is a twisted sector in }\sigma_{ij}}x_ix_j\sum_{(l_i),(r_i)}\Gamma_{\gamma^{*}}^{\gamma,(l_i)}\otimes\Gamma_{\gamma}^{\circ,\gamma^{*},(r_i)}.
\end{equation}
We denote by $\varphi(z) = \Gamma(z)^{-1}$ the reciprocal of the $\Gamma$-function.
We get
\begin{equation*}
\begin{split}
x_ix_j\Gamma_{\gamma^{*}}^{\gamma,(l_i)}\otimes\Gamma_{\gamma}^{\circ,\gamma^{*},(r_i)}=&\big(\prod_{k=0}^n(1+\frac{D_k}{2\pi i}\log x_k)(\varphi(1+l_k)+\varphi^{\prime}(1+l_k)\frac{D_k}{2\pi i})1_{\gamma}\big)\\
&\otimes\big(\prod_{k=0}^n(1+\frac{D_k}{2\pi i}\log x_k)(\varphi(1+r_k)+\varphi^{\prime}(1+r_k)\frac{D_k}{2\pi i})F_{\emptyset,\gamma^{*}}\big)\\
=&\prod_{k=0}^n\varphi(1+l_k)\varphi(1+r_k)1_{\gamma}\otimes F_{\emptyset,\gamma^{*}}.
\end{split}
\end{equation*}
By $(2)$ of Lemma \ref{dualtwist}, we have $\sum l_iv_i=l_iv_i+l_jv_j=-c=-\gamma^{*}=(\gamma_i-1)v_i+(\gamma_j-1)v_j$. Since $v_i$ and $v_j$ are independent, we obtain $l_i=\gamma_i-1$ and $l_j=\gamma_j-1$. Hence $r_i=-\gamma_i$ and $r_j=-\gamma_j$. We use the well-known formula $\Gamma(s)\Gamma(1-s)=\frac{\pi}{\sin(s\pi)}$ to get
\begin{equation*}
\begin{split}
\prod_{k=0}^n\varphi(1+l_k)\varphi(1+r_k)=&\varphi(\gamma_i)\varphi(1-\gamma_i)\varphi(\gamma_j)\varphi(1-\gamma_j)\\
=&\frac{\sin(\gamma_i\pi)\sin(\gamma_j\pi)}{\pi^2}=\frac{\sin^{2}(\gamma_i\pi)}{\pi^2}.
\end{split}
\end{equation*}
Therefore, the summation \eqref{summationfortwistedsector} equals to
\begin{equation*}
-n\sum_{\{i,j\}\in\Sigma}(j-i)\sum_{\gamma\text{ is a twisted sector in }\sigma_{ij}}\frac{\sin^{2}(\gamma_i\pi)}{\pi^2}1_{\gamma}\otimes F_{\emptyset,\gamma^{*}},
\end{equation*}
and the result follows.
\end{proof}

\begin{remark}
The proof of $(1)$ in Lemma \ref{dualtwist} implies that contributions of untwisted sector for $\Gamma$ and twisted sector for $\Gamma^{\circ}$ is zero.
\end{remark}

{\bf Step 2.} Now we consider the constant contributed by the terms of the untwisted sector. The terms of the untwisted sector in $x_ix_j\Gamma_c\otimes\Gamma_d^{\circ}$ are
\begin{equation*}
\begin{split}
x_ix_j\Gamma_c^{(0,0)}&\otimes\Gamma_d^{\circ,(0,0)}=\big(x_ix_j\sum_{\substack{\sum l_kv_k=-c \\ \sum r_kv_k=-d \\ l_k,r_k\in\mathbb{Z}}}\prod_{k=0}^n x_k^{l_k+\frac{D_k}{2\pi {\rm i}}}\varphi(1+l_k+\frac{D_k}{2\pi {\rm i}})\, 1_{(0,0)}\big)\otimes\\
&\big(\prod_{k=0}^n x_k^{r_k+\frac{D_k}{2\pi {\rm i}}}\prod_{k,r_k\geq 0}\varphi(1+r_k+\frac{D_k}{2\pi {\rm i}})\prod_{k,r_k< 0}\varphi(1+r_k+\frac{D_k}{2\pi {\rm i}})D_k^{-1}\cdot F_{\sigma}\big).
\end{split}
\end{equation*}

\begin{lemma}\label{conditionuntwisted}
In $x_ix_j\Gamma_c^{(0,0)}\otimes\Gamma_d^{\circ,(0,0)}$, only the terms that belong to the following three cases can have nonzero contribution to the constant.
\begin{itemize}
\item All $l_k=0$, $c=(0,0)$, $r_i=r_j=-1$, all other $r_k=0$;
\item $c=v_i,d=v_j,l_i=r_j=-1$ and all other $l_k,r_k$ are 0;
\item $c=v_j,d=v_i,l_j=r_i=-1$ and all other $l_k,r_k$ are 0.
\end{itemize}
\end{lemma}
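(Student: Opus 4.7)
The strategy is to combine three necessary conditions on $(l_k),(r_k)$ for a term in the double-sum expansion of $x_ix_j\Gamma_c^{(0,0)}\otimes\Gamma_d^{\circ,(0,0)}$ to contribute a nonzero constant: matching the total $x$-degrees, using the multiplicative relations in $H_{(0,0)}$ and $H^c_{(0,0)}$ from Propositions \ref{newH} and \ref{newH^c}, and imposing the pairing conditions in Definition \ref{pairing} ($c,d\in\sigma_{ij}$ with $\deg c=1$, or $c=(0,0)$ for the first summand). Once these are in place, the three listed cases will emerge from a short enumeration.

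The $x$-degree condition is immediate: since $x_k^{l_k+D_k/(2\pi{\rm i})}\cdot x_k^{r_k+D_k/(2\pi{\rm i})}$ has total $x_k$-exponent $l_k+r_k$, the prefactor $x_ix_j$ forces
\[
l_k+r_k=-\delta_{ki}-\delta_{kj}\qquad\text{for every }k.
\]
The cohomological input enters next. In $H_{(0,0)}$ every product $D_kD_l$ vanishes, so the expansion of $\Gamma_c^{(0,0)}$ is at most of degree one in the $D$'s; combined with the fact that $\varphi(1+l_k+D_k/(2\pi{\rm i}))$ has a simple zero in $D_k$ whenever $l_k\leq -1$, the set $S:=\{k:l_k\leq -1\}$ must have $|S|\leq 1$, else $\Gamma_c^{(0,0)}=0$. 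Dually, $D_kF_{\{i',j'\}}=0$ in $H^c_{(0,0)}$ forces $\sigma:=\{k:r_k<0\}$ to be a cone of $\Sigma$ with $|\sigma|\leq 2$, and any $D$-linear corrections in the expansion of $\Gamma_d^{\circ,(0,0)}$ against $F_\sigma$ of maximal dimension are annihilated.

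The enumeration splits on $|S|$. For $|S|=0$ the degree equation makes $r_i,r_j\leq -1$, so $\sigma\supseteq\{i,j\}$, and $|\sigma|\leq 2$ pins down $\sigma=\{i,j\}$ with $l_k=0$ for all $k\neq i,j$; then $c\in C$ with $\deg c=-l_i-l_j$ forces $l_i=l_j=0$, producing Case 1. For $|S|=1$ with $l_{k_0}\leq -1$ and $k_0=i$ (the $k_0=j$ case being symmetric), one has $i\notin\sigma$ and $j\in\sigma$; the sub-case $\sigma=\{j\}$ leads, after a short count of $(l_i,l_j)\in\mathbb{Z}_{\leq -1}\times\mathbb{Z}_{\geq 0}$ compatible with $c\in\sigma_{ij}$ and $\deg c=1$, uniquely to $(l_i,l_j)=(-1,0)$, yielding Case 2.

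The hard part will be excluding the two remaining sub-cases: $|S|=1$ with $k_0\notin\{i,j\}$, and $|S|=1$ with $k_0\in\{i,j\}$ but $|\sigma|=2$. In both, a candidate term of the shape $D_{k_0}\otimes F_{\{a,b\}}$ survives the combinatorial constraints. The decisive geometric fact is that the non-vanishing of $F_{\{a,b\}}$ in $H^c_{(0,0)}$ requires $v_a$ and $v_b$ to be adjacent rays of $\Sigma$, which in these configurations forces $v_{k_0}$ to lie strictly between them in the ray ordering and hence $k_0\notin\Sigma_{(1)}$. The relation $D_k=0$ for $k\notin\Sigma$ in Proposition \ref{newH} then gives $D_{k_0}=0$ in $H_{(0,0)}$, so the entire candidate term vanishes in $H\otimes H^c$ independently of the choice of $\Sigma$. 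With these sub-cases ruled out, only the three listed configurations remain.
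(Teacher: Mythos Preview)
Your plan is essentially correct and follows the same architecture as the paper's proof: match $x$-degrees to get $l_k+r_k=-\delta_{ki}-\delta_{kj}$, use $D_kD_l=0$ in $H_{(0,0)}$ to force $|S|\leq 1$, use the structure of $H^c_{(0,0)}$ to force $|\sigma|\leq 2$ with $\sigma\in\Sigma$, and then enumerate. Your $|S|=0$ and $|S|=1,\ \sigma=\{j\}$ cases are handled exactly as in the paper and correctly produce the three surviving configurations.

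The one point that needs sharpening is the sentence ``the non-vanishing of $F_{\{a,b\}}$ \ldots\ requires $v_a$ and $v_b$ to be adjacent rays of $\Sigma$, which in these configurations forces $v_{k_0}$ to lie strictly between them.'' Adjacency of $a,b$ in $\Sigma$ by itself does \emph{not} place $k_0$ between them; you must invoke the pairing constraint $c\in\sigma_{ij}$ (or $c=(0,0)$) from Definition~\ref{pairing}. Concretely, in the sub-case $k_0\notin\{i,j\}$ one has $-l_{k_0}v_{k_0}=c+l_iv_i+l_jv_j\in\sigma_{ij}$ (since $l_i,l_j\geq 0$), whence $v_{k_0}\in\sigma_{ij}$; in the sub-case $k_0=i$, $\sigma=\{j,m\}$, the same computation with $c\in\sigma_{ij}$ and $l_m>0$ forces $m<i<j$, so that $i$ sits strictly between the adjacent rays $m,j$. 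Only then does $k_0\notin\Sigma_{(1)}$ and $D_{k_0}=0$ follow. This is precisely where the paper spends its effort (its case~(4), split into two geometric contradictions); your version is the contrapositive of theirs, and is fine once the missing use of $c\in\sigma_{ij}$ is made explicit.
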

\begin{proof}
Only the terms in which $(l_k)$ and $(r_k)$ satisfy
\begin{equation*}
l_k+r_k=0, \text{ for all }k\not=i,j ,\quad l_k+r_k=-1,\text{ if }k=i,j
\end{equation*} can have nonzero contribution to constant.
Moreover, we can ignore the $x_k^{\frac{D_k}{2\pi i}}$ since they will contribute $\log x_k$ which is not a constant. Furthermore, if $l_k<0$ then $\varphi(1+l_k+\frac{D_k}{2\pi i})$ will be a multiple of $D_k$, so we only need to consider terms such that $\{k| l_k<0\}$ has no more than one element. Also, by definition of $H_{(0,0)}^c$, in order to get nonzero contribution, we require that $\sigma$ is either a ray in the interior of $\Sigma$ or a $2-$dimensional cone in $\Sigma$. Hence we only need to consider the following four cases:
\begin{enumerate}
\item $\#\{k,l_k<0\}=0$, $\#\{k,r_k<0\}=1$;
\item $\#\{k,l_k<0\}=0$, $\#\{k,r_k<0\}=2$;
\item $\#\{k,l_k<0\}=1$, $\#\{k,r_k<0\}=1$;
\item $\#\{k,l_k<0\}=1$, $\#\{k,r_k<0\}=2$.
\end{enumerate}

\smallskip

We first show that case $(1)$ cannot happen. Since $l_k=-1-r_k$ for $k=i,j$,  exactly one of $l_k, r_k$ is negative for $k=i,j$. So $\#\{k,l_k<0\}+\#\{k,r_k<0\}\geq 2$.

\smallskip

Now we show that case $(4)$ cannot happen. We consider the following two subcases:

 \smallskip

 One subcase is that the unique $k$ such that $l_k<0$ does not equal $i$ or $j$. Then we have $l_i,l_j\geq 0$, $r_i,r_j<0$, $r_k>0$ and $l_t=r_t=0$ for $t\neq i,j,k$. By the definition of the cohomology $H_{(0,0)}$, we have $D_k\, 1_{(0,0)}=0$ if $\{k\}\not\in\Sigma$. So we need $k\in \Sigma$ to obtain a nonzero term. Since $F_{\sigma}=0$ if $\sigma$ is not a cone in $\Sigma$, we need $\sigma=\{k,r_k<0\}=\{i,j\} \in \Sigma$. Thus $v_i$ and $v_j$ must be consecutive rays in $\Sigma$. This implies $v_k\not\in\sigma_{ij}$. Now by the definition of $L_{c,(0,0)}$, we have $-c=l_iv_i+l_jv_j+l_kv_k$. Thus $-l_kv_k=c+l_iv_i+l_jv_j$. In Definition \ref{pairing}, we know that $c\in\sigma_{ij}$.  So the right hand side of this equation lies in $\sigma_{ij}$, which leads to contradiction.

\smallskip

The second subcase is that $\{k,l_k<0\}$ equals $\{i\}$ or $\{j\}$. Without loss of generality, we assume $i<j$ and $\{k,l_k<0\}=\{i\}$. Now we have $l_j\geq0$. So we can assume $\{t,r_t<0\}=\{j,k\}$ for some $k\neq i,j$. With the same argument as in the first subcase, we need $\{j,k\}\in\Sigma$ and $i\in\Sigma$. Thus $i$ is the smallest one of the three indices $i,j,k$. Now we have $l_i<0,r_i\geq0$, $l_j\geq0,r_j<0$, $l_k>0,r_k<0$ and $l_t=r_t=0$ for $t\neq i,j,k$. So $-c=l_iv_i+l_jv_j+l_kv_k$. Then we consider the equation $c+l_jv_j+l_kv_k=-l_iv_i$. All terms on the left hand side is to the right or on the ray $\mathbb{R}_{\geq0}v_i$. In fact, the left hand side are to the right of the ray $\mathbb{R}_{\geq0}v_i$ since $l_k>0$ and $v_k$ is to the right of $v_i$. This leads to contradiction.

\medskip

 Note that if the total number of negative entries $\#\{k,l_k<0\}+\#\{k,r_k<0\}$ equals  $2$, then all $l_k,r_k$ are zero for all $k\not=i,j$. The reason is that there is exactly one of $l_k, r_k$ is negative for $k=i,j$.

\smallskip


Now we consider case $(2)$. In this case, we have $l_i,l_j\geq0$, $r_i,r_j<0$ and $l_t=r_t=0$ for $t\neq i,j$. We have $-l_iv_i-l_jv_j=c\in\sigma_{ij}$. This implies $l_i=l_j=0$ and $c=(0,0)$. Thus we get $r_i=r_j=-1$, which gives the first case of this lemma.

\smallskip

Then we consider case $(3)$. We first assume $\{k,l_k<0\}=\{i\}$. Now we have $l_i<0,l_j\geq0, r_i\geq0, r_j<0$. Since $\#\{k,l_k<0\}=\#\{k,r_k<0\}=1$, we get $l_k\geq 0$ and $r_k\geq 0$ for $k\neq i,j$. Together by $l_k=-r_k$ for $k\neq i,j$, we get $l_k=r_k=0$ for $k\neq i,j$.
Since $\deg c \leq 1$, we get $c=v_i$, $d=v_j$, and $l_i=r_j=-1$. Similarly, in the case of $\{k,l_k<0\}=\{j\}$, we have $c=v_j$, $d=v_i$, $l_j=r_i=-1$ and $l_k=r_k=0$ for $k\neq i,j$. This gives the second and third cases of this lemma.
\end{proof}

\begin{remark}
In fact, case $(2)$ corresponds to the terms with $\Gamma_{(0,0)}$ in the pairing of Definition \eqref{pairing} and case (3) corresponds to the terms without $\Gamma_{(0,0)}$.
\end{remark}

\smallskip
We are ready to calculate the contribution of the untwisted sector.
We first consider the terms of the case $c=(0,0)$ of Lemma \ref{conditionuntwisted}.
\begin{lemma}\label{(2)}
The constant contributed by the terms $x_ix_j\Gamma^{(0,0)}_{(0,0)}\otimes\Gamma^{\circ,(0,0)}_{v_i+v_j}$ is $\frac{1}{(2\pi{\rm i})^2}1_{(0,0)}\otimes F_{ij}$.
\end{lemma}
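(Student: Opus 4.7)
The plan is to use Lemma \ref{conditionuntwisted} to single out the unique index vectors in $L_{(0,0),(0,0)}$ and $L_{v_i+v_j,(0,0)}$ that can contribute a nonzero constant, and then evaluate them by a short manipulation in $H_{(0,0)}\otimes H^c_{(0,0)}$ using the relations from Propositions \ref{newH} and \ref{newH^c}. By the first bullet of Lemma \ref{conditionuntwisted}, the only contributing pair is $(l_k)=(0,\ldots,0)$ on the left and $(r_k)$ with $r_i=r_j=-1$ and all other $r_k=0$ on the right, which forces $\sigma=\{i,j\}$ and $F_\sigma=F_{\{i,j\}}$. Substituting into Definition \ref{gammac}, letting $x_ix_j$ cancel the $x_i^{-1}x_j^{-1}$ that arises from $r_i=r_j=-1$, and using the identity $\varphi(z)/z=\varphi(1+z)$ (where $\varphi=1/\Gamma$) to tidy the two singular factors on the right, we obtain
\begin{align*}
\Gamma_{(0,0)}^{(0,0),(0)} &= \prod_k \varphi\!\bigl(1+\tfrac{D_k}{2\pi{\rm i}}\bigr)\, x_k^{D_k/(2\pi{\rm i})}\cdot 1_{(0,0)}, \\
x_ix_j\,\Gamma_{v_i+v_j}^{\circ,(0,0),(r)} &= \frac{1}{(2\pi{\rm i})^2}\prod_k \varphi\!\bigl(1+\tfrac{D_k}{2\pi{\rm i}}\bigr)\, x_k^{D_k/(2\pi{\rm i})}\cdot F_{\{i,j\}}.
\end{align*}

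Extracting the $x$-constant part commutes with the tensor product, since the $D_k$'s on the two factors are independent formal operators sitting inside the exponents of $\log x_k$; so it suffices to replace every $x_k^{D_k/(2\pi{\rm i})}$ by $1$ in each factor separately and compute in cohomology. On the left one is left with $\prod_k \varphi(1+D_k/(2\pi{\rm i}))\cdot 1_{(0,0)}$; expanding $\varphi(1+z)=1+O(z)$ and invoking the relations $D_kD_l=0$ and $D_k=0$ for $k\notin\Sigma$ in $H_{(0,0)}$ from Proposition \ref{newH} collapses the product to $1$ plus a linear-in-$D$ correction proportional to $\sum_{k\in\Sigma}D_k$, which vanishes by the same proposition, leaving exactly $1_{(0,0)}$. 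On the right one is left with $(2\pi{\rm i})^{-2}\prod_k\varphi(1+D_k/(2\pi{\rm i}))\cdot F_{\{i,j\}}$; by Proposition \ref{newH^c}, $D_k F_{\{i,j\}}=0$ for every $k\in\Sigma$, so each $\varphi(1+D_k/(2\pi{\rm i}))$ reduces immediately to $1$ upon multiplication by $F_{\{i,j\}}$, producing $(2\pi{\rm i})^{-2}F_{\{i,j\}}$. Tensoring the two sides yields $\tfrac{1}{(2\pi{\rm i})^2}\,1_{(0,0)}\otimes F_{\{i,j\}}$, as claimed.

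The one nontrivial point is the bookkeeping that identifies the unique surviving pair $(l_k),(r_k)$, but this is already handled by Lemma \ref{conditionuntwisted}, which is the main technical input. Once that is in place, the computation is driven entirely by two very effective annihilation relations, $D_kD_l=0$ in $H_{(0,0)}$ and $D_kF_{\{i,j\}}=0$ in $H^c_{(0,0)}$, which jointly destroy every contribution beyond the leading constant and make the rest of the calculation essentially mechanical.
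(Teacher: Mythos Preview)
Your proof is correct and follows essentially the same approach as the paper. The only cosmetic difference is that you invoke the recursion $\varphi(z)/z=\varphi(1+z)$ to rewrite the two singular factors on the $H^c$ side into the uniform product $\prod_k\varphi(1+\tfrac{D_k}{2\pi\mathrm{i}})$, whereas the paper keeps $\frac{\varphi(D_i/2\pi\mathrm{i})}{D_i}\cdot\frac{\varphi(D_j/2\pi\mathrm{i})}{D_j}$ and evaluates it directly via $\varphi'(0)=1$; both routes collapse to $\tfrac{1}{(2\pi\mathrm{i})^2}F_{ij}$ once one uses $D_kF_{\{i,j\}}=0$ and $\sum_k D_k=0$.
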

\begin{proof}
We have
\begin{equation*}
\begin{split}
x_ix_j\Gamma^{(0,0)}_{(0,0)}\otimes\Gamma^{\circ,(0,0)}_{v_i+v_j}\approx&\big(\prod_{k=0}^n\varphi(1+\frac{D_k}{2\pi {\rm i}})\, 1_{(0,0)}\big)\otimes\big(\prod_{k\not=i,j}\varphi(1+\frac{D_k}{2\pi{\rm i}})(\frac{\varphi(\frac{D_i}{2\pi{\rm i}})}{D_i})(\frac{\varphi(\frac{D_j}{2\pi{\rm i}})}{D_j})F_{ij}\big)\\
\end{split}
\end{equation*}
where $\approx$ means equal after ignoring the nonconstant terms.
Also, we have
\begin{equation*}
\prod_{k=0}^n\varphi(1+\frac{D_k}{2\pi{\rm i}})=\prod_{k=0}^n(1+\varphi^{\prime}(1)\frac{D_k}{2\pi{\rm i}})=1+\frac{\varphi^{\prime}(1)}{2\pi{\rm i}}\sum_{k=0}^nD_k=1
\end{equation*}
and
\begin{equation*}
\prod_{k\not=i,j}\varphi(1+\frac{D_k}{2\pi{\rm i}})(\frac{\varphi(\frac{D_i}{2\pi{\rm i}})}{D_i})(\frac{\varphi(\frac{D_j}{2\pi{\rm i}})}{D_j})F_{ij}=\frac{(\varphi^{\prime}(0))^2}{(2\pi{\rm i})^2}F_{ij}=\frac{1}{(2\pi{\rm i})^2}F_{ij}.
\end{equation*}
Thus, we get
\begin{equation*}
x_ix_j\Gamma^{(0,0)}_{(0,0)}\otimes\Gamma^{\circ,(0,0)}_{v_i+v_j}\approx\frac{1}{(2\pi{\rm i})^2}1_{(0,0)}\otimes F_{ij}.
\end{equation*}
\end{proof}

We now consider the terms $c\neq (0,0)$ of Lemma \ref{conditionuntwisted}.

\begin{lemma}\label{(3)}
The constant contributed by the term $x_ix_j\Gamma_{v_i}^{(0,0)}\otimes\Gamma_{v_j}^{\circ,(0,0)}$ and $x_ix_j\Gamma_{v_j}^{(0,0)}\otimes\Gamma_{v_i}^{\circ,(0,0)}$ are $\frac{1}{(2\pi{\rm i})^2}D_i1_{(0,0)}\otimes F_{j}$ and $\frac{1}{(2\pi{\rm i})^2}D_j1_{(0,0)}\otimes F_{i}$ respectively.
\end{lemma}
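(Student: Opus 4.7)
The plan is to follow the pattern of Lemma \ref{(2)}, while handling the key subtlety that, unlike $F_{\{i,j\}}$, the element $F_j \in H^c_{(0,0)}$ is not annihilated by $D_j$. Writing $j = i_k$, Proposition \ref{F_ij} gives $D_j F_j = -\alpha_j F$ where $\alpha_j = \frac{1}{i_k - i_{k-1}} + \frac{1}{i_{k+1} - i_k}$, so the expansions of the reciprocal-Gamma factors will produce nontrivial first-order-in-$D$ corrections that must be shown to cancel.

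By case (3) of Lemma \ref{conditionuntwisted}, only the single summand with $l_i = -1$, $r_j = -1$ and all other $l_k, r_k = 0$ contributes to the constant. For the $\Gamma$-side I would first note that $\varphi(D_i/(2\pi {\rm i})) = D_i/(2\pi {\rm i})$ (since $D_i^2 = 0$), and since $D_i D_k = 0$ for all $k$ in $H_{(0,0)}$ by Proposition \ref{newH}, every remaining $\varphi$-correction and every factor $x_k^{D_k/(2\pi {\rm i})}$ is annihilated upon multiplying by $D_i$. This yields the closed form $x_i \Gamma_{v_i}^{(0,0)} = \frac{1}{2\pi {\rm i}} D_i\, 1_{(0,0)}$ with no $x$-dependence at all.

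For the $\Gamma^\circ$-side, the product $\prod_k x_k^{D_k/(2\pi {\rm i})}$ only contributes terms linear (and higher) in $\log x_k$, so it may be dropped when extracting the constant-in-$x$ part. What remains is the action of $\frac{\varphi(D_j/(2\pi {\rm i}))}{D_j}\prod_{k\neq j}\varphi(1+\frac{D_k}{2\pi {\rm i}})$ on $F_j$. Since $D_k F = 0$ and $D_k D_l F_j = 0$ for all $k,l$, only constant and first-order terms in $D$ survive. Using $\varphi(z)/z = 1 + \gamma z + \cdots$ and $\varphi(1+z) = 1 + \gamma z + \cdots$ (with $\gamma = \gamma_{EM}$ the Euler--Mascheroni constant), together with $\sum_k D_k = 0$ acting on $H^c_{(0,0)}$, I obtain $\prod_{k\neq j}\varphi(1+\frac{D_k}{2\pi {\rm i}})F_j = F_j + \frac{\gamma \alpha_j}{2\pi {\rm i}} F$. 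Multiplying by $\varphi(D_j/(2\pi {\rm i}))/D_j = \frac{1}{2\pi {\rm i}} + \frac{\gamma D_j}{(2\pi {\rm i})^2} + \cdots$, the contribution $\frac{\gamma}{(2\pi {\rm i})^2} D_j F_j = -\frac{\gamma\alpha_j}{(2\pi {\rm i})^2} F$ exactly cancels $\frac{1}{2\pi {\rm i}}\cdot \frac{\gamma\alpha_j}{2\pi {\rm i}} F$, leaving $\frac{1}{2\pi {\rm i}} F_j$.

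Tensoring the two constants yields the asserted $\frac{1}{(2\pi {\rm i})^2} D_i 1_{(0,0)}\otimes F_j$, and the second claim follows by swapping $i$ and $j$. The cancellation in the previous paragraph is the only genuine obstacle; it is not a formal identity but relies on the fact that $\varphi(z)/z$ and $\varphi(1+z)$ share the same linear coefficient $\gamma$, combined with Proposition \ref{F_ij} and the relation $\sum_k D_k = 0$ to turn the two $\alpha_j$-contributions into opposites.
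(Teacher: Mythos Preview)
Your argument is correct and follows essentially the same route as the paper. Both proofs reduce the $\Gamma$-side to $\frac{D_i}{2\pi\mathrm{i}}\,1_{(0,0)}$ using $D_iD_k=0$ and $\sum_k D_k=0$, and both show that on the $\Gamma^\circ$-side the first-order corrections cancel; your identity ``$\varphi(z)/z$ and $\varphi(1+z)$ share linear coefficient $\gamma$'' is exactly the paper's $\varphi''(0)=2\varphi'(1)$. The only cosmetic difference is that you evaluate $D_jF_j=-\alpha_jF$ via Proposition~\ref{F_ij} before cancelling, whereas the paper leaves it as $D_jF_j$ and shows its coefficient $\frac{\varphi''(0)}{2(2\pi\mathrm{i})^2}-\frac{\varphi'(1)}{(2\pi\mathrm{i})^2}$ vanishes directly---so the detour through $\alpha_j$ is not actually needed.

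One notational quibble: your sentence ``$x_i\Gamma_{v_i}^{(0,0)}=\frac{1}{2\pi\mathrm{i}}D_i\,1_{(0,0)}$ with no $x$-dependence at all'' is literally true only for the single summand $l_i=-1$, $l_k=0$ ($k\neq i$) that you isolated in the previous sentence, not for the full series $\Gamma_{v_i}^{(0,0)}$ (other summands carry nonzero integer powers of the $x_k$). Your intent is clear from context, but it would be cleaner to write $x_i\Gamma_{v_i}^{(0,0),(l_k)}$ for that specific term.
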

\begin{proof}
We have
\begin{equation*}
x_ix_j\Gamma_{v_i}^{(0,0)}\otimes\Gamma_{v_j}^{\circ,(0,0)}\approx\big(\prod_{k\not=i}\varphi(1+\frac{D_k}{2\pi{\rm i}})\varphi(\frac{D_i}{2\pi{\rm i}})\, 1_{(0,0)}\big)\otimes\big(\prod_{k\not=j}\varphi(1+\frac{D_k}{2\pi{\rm i}})\frac{\varphi(\frac{D_j}{2\pi{\rm i}})}{D_j}\, F_{j}\big)
\end{equation*}
The first product is
\begin{equation*}
\begin{split}
\prod_{k\not=i}\varphi(1+\frac{D_k}{2\pi{\rm i}})\varphi(\frac{D_i}{2\pi{\rm i}})=&(1+\frac{\varphi^{\prime}(1)}{2\pi{\rm i}}\sum_{k\not=i}D_k)(\varphi(0)+\varphi^{\prime}(0)\frac{D_i}{2\pi{\rm i}})\\
=&(1-\frac{\varphi^{\prime}(1)}{2\pi{\rm i}}D_i)\frac{D_i}{2\pi{\rm i}}
=\frac{D_i}{2\pi{\rm i}}.
\end{split}
\end{equation*}
The second product is
\begin{equation*}
\begin{split}
\prod_{k\not=j}\varphi(1+\frac{D_k}{2\pi{\rm i}})\frac{\varphi(\frac{D_j}{2\pi{\rm i}})}{D_j}\, F_{j}=&(1-\frac{\varphi^{\prime}(1)}{2\pi{\rm i}}D_j)(\frac{1}{2\pi{\rm i}}+\frac{\varphi^{\prime\prime}(0)}{2}\frac{D_j}{(2\pi{\rm i})^2})F_{j}\\
=&\frac{1}{2\pi{\rm i}}F_{j}+(\frac{\varphi^{\prime\prime}(0)}{2}\frac{1}{(2\pi{\rm i})^2}-\frac{\varphi^{\prime}(1)}{(2\pi{\rm i})^2})D_jF_j.
\end{split}
\end{equation*}
Note that by looking at Taylor expansion of $\varphi$ centered at $0$, we see $\varphi^{\prime\prime}(0)=2\varphi^{\prime}(1)$. Therefore the coefficient of the second term equals to 0.
Then we have
\begin{equation*}
x_ix_j\Gamma_{v_i}^{(0,0)}\otimes\Gamma_{v_j}^{\circ,(0,0)}\approx\frac{1}{(2\pi{\rm i})^2}D_i1_{(0,0)}\otimes F_{j}.
\end{equation*}
Similarly, we have
\begin{equation*}
\begin{split}
x_ix_j\Gamma_{v_j}^{(0,0)}\otimes\Gamma_{v_i}^{\circ,(0,0)}\approx
\frac{1}{(2\pi\rm i)^2}D_j1_{(0,0)}\otimes F_{i}.
\end{split}
\end{equation*}
\end{proof}

\begin{proposition}\label{conuntwist}
The constant contributed by the terms of untwisted sector in the pairing in Definition \ref{pairing} is
\begin{equation*}
\sum_{\substack{i,j\in\Sigma\\ i<j}}(j-i)^2\frac{1}{(2\pi\rm i)^2}1_{(0,0)}\otimes F_{ij}-n\sum_{\substack{i,j\in\Sigma\\ i<j}}(j-i)\frac{1}{2(2\pi\rm i)^2}(D_i1_{(0,0)}\otimes F_{j}+D_j1_{(0,0)}\otimes F_{i}),
\end{equation*}
where $F_0=0$ and $F_n=0$.
\end{proposition}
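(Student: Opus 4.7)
The plan is to combine Lemma \ref{conditionuntwisted} (which narrows untwisted contributions to three configurations per pair $(i,j)$) with the explicit constant computations in Lemmas \ref{(2)} and \ref{(3)}, then reassemble everything according to the coefficient structure in Definition \ref{pairing}.

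First I would recall that by Lemma \ref{conditionuntwisted}, the only $(c,d,(l_k),(r_k))$ from the untwisted sector that can produce a constant in $x_ix_j\Gamma_c^{(0,0)}\otimes \Gamma_d^{\circ,(0,0)}$ fall into exactly three configurations: $(c,d)=((0,0),v_i+v_j)$, $(c,d)=(v_i,v_j)$, and $(c,d)=(v_j,v_i)$. The first of these matches the first summand of Definition \ref{pairing} (the $\Phi_{(0,0)}\Psi_{v_i+v_j}$ piece), weighted by $(j-i)^2 x_ix_j$. The other two belong to the double sum, each weighted by $-n(j-i)x_ix_j\,\delta_{ij}^c$.

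Next I would plug in the explicit constants. Lemma \ref{(2)} identifies the constant part of $x_ix_j\Gamma_{(0,0)}^{(0,0)}\otimes\Gamma_{v_i+v_j}^{\circ,(0,0)}$ as $\frac{1}{(2\pi{\rm i})^2}\,1_{(0,0)}\otimes F_{ij}$, and Lemma \ref{(3)} computes the remaining two. Since $v_i$ and $v_j$ lie on the boundary rays of $\sigma_{ij}$ (not in its interior), the factor $\delta_{ij}^c$ equals $1/2$ in both of these cases. Adding the three contributions for a fixed pair $i<j$ gives
\[
(j-i)^2\cdot\frac{1}{(2\pi{\rm i})^2}\,1_{(0,0)}\otimes F_{ij}\;-\;\frac{n(j-i)}{2}\cdot\frac{1}{(2\pi{\rm i})^2}\bigl(D_i\,1_{(0,0)}\otimes F_j+D_j\,1_{(0,0)}\otimes F_i\bigr),
\]
and summing over all admissible $i<j$ yields the formula stated in the proposition.

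Finally I would address the convention $F_0=F_n=0$: the generators $F_i$ of $H^c_{(0,0)}$ exist only for $v_i\in\Sigma^\circ$, i.e., $0<i<n$. When $j=n$ (respectively $i=0$), the condition $d\in C^\circ$ imposed in Definition \ref{pairing} forces the corresponding summand of the double sum to be absent, so setting $F_0=F_n=0$ allows one uniform expression without listing boundary exclusions. I anticipate no genuine obstacle here: after Lemmas \ref{conditionuntwisted}, \ref{(2)}, \ref{(3)} the argument reduces to indexed bookkeeping, and the only point requiring care is the boundary weight $\delta_{ij}^c=\tfrac12$ coming from $c\in\{v_i,v_j\}$.
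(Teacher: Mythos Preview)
Your proposal is correct and matches the paper's own proof essentially line for line: invoke Lemma~\ref{conditionuntwisted} to isolate the three surviving configurations, apply Lemmas~\ref{(2)} and~\ref{(3)} to each, note $\delta_{ij}^c=\tfrac12$ for $c\in\{v_i,v_j\}$, and sum. Your explanation of the boundary convention $F_0=F_n=0$ via $d\in C^\circ$ and $F_i$ existing only for $v_i\in\Sigma^\circ$ is in fact slightly more explicit than the paper's one-line remark.
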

\begin{proof}
The constant contributed by terms of untwisted sector in the pairing in Definition \ref{pairing} is
\begin{equation}\label{constantuntwistf}
\begin{split}
&\sum_{\substack{i,j\in\Sigma\\ i<j}}(j-i)^2x_ix_j\Gamma_{(0,0)}^{(0,0)}\otimes\Gamma_{v_i+v_j}^{\circ,(0,0)}\\
&-n\sum_{\substack{i,j\in\Sigma\\ i<j}}(j-i)\frac{1}{2}(x_ix_j\Gamma_{v_i}^{(0,0)}\otimes\Gamma_{v_j}^{\circ,(0,0)}+x_ix_j\Gamma_{v_j}^{(0,0)}\otimes\Gamma_{v_i}^{\circ,(0,0)}).
\end{split}
\end{equation}
Then by Lemma \ref{(2)} and Lemma \ref{(3)},  equation \eqref{constantuntwistf} equals to
\begin{equation*}
\sum_{\substack{i,j\in\Sigma\\ i<j}}(j-i)^2\frac{1}{(2\pi\rm i)^2}1_{(0,0)}\otimes F_{ij}-n\sum_{\substack{i,j\in\Sigma\\ i<j}}(j-i)\frac{1}{2(2\pi\rm i)^2}(D_i1_{(0,0)}\otimes F_{j}+D_j1_{(0,0)}\otimes F_{i}).
\end{equation*}
We note that in the second term in the above formula, if $i$ or $j$ lies on the boundary of the fan $\Sigma$, i.e., $i,j \in\{0, n\}$, then $F_i=0$ and $F_j=0$.
\end{proof}

\begin{theorem}\label{pairingforgamma}
We have the following formula.
\begin{equation*}
\begin{split}
\langle\Gamma,\Gamma^{\circ}\rangle=&-n\sum_{\substack{\gamma\text{ is a twisted sector }\\ \sigma(\gamma)=\sigma_{ij}}}(j-i)\frac{\sin^{2}(\gamma_i\pi)}{\pi^2}1_{\gamma}\otimes F_{\emptyset,\gamma^{*}}\\
&+\sum_{\substack{i,j\in\Sigma\\ i<j}}(j-i)^2\frac{1}{(2\pi\rm i)^2}1_{(0,0)}\otimes F_{ij}\\
&-n\sum_{\substack{i,j\in\Sigma\\ i<j}}(j-i)\frac{1}{2(2\pi\rm i)^2}(D_i1_{(0,0)}\otimes F_{j}+D_j1_{(0,0)}\otimes F_{i})
\end{split}
\end{equation*}
where $F_0=0$ and $F_n=0$.
\end{theorem}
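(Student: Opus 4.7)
The plan is to assemble the theorem directly from the constant-contribution computations already performed in Propositions \ref{constwist} and \ref{conuntwist}, using the fact that the pairing is known to be constant as a function of $(x_0,\ldots,x_n)$. More precisely, since $\Gamma$ and $\Gamma^\circ$ are solutions of $bbGKZ(C,0)$ and $bbGKZ(C^\circ,0)$ by Proposition \ref{gammasolution}, Theorem \ref{pairingthm} applies coordinate-wise with values in $(\oplus_\gamma H_\gamma)\otimes(\oplus_\gamma H_\gamma^c)$ and guarantees that $\langle\Gamma,\Gamma^\circ\rangle$ is constant. Therefore it suffices to identify the $x$-independent part of the formal series, which is exactly what was extracted in Steps 1 and 2 of the preceding discussion.

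First, I would split the pairing from Definition \ref{pairing} into two pieces: the summand involving $\Gamma_{(0,0)}$ and the double-summation not involving it. For each of these I would then decompose $\Gamma_c$ and $\Gamma_d^\circ$ into their twisted-sector components $\Gamma_c^\gamma$ and $\Gamma_d^{\circ,\gamma'}$ as introduced in the Notations after Proposition \ref{gammasolution}. The cross terms with $\gamma\ne\gamma'$ in the component $H_\gamma\otimes H_{\gamma'}^c$ contribute zero to the constant, because a necessary integrality condition on the exponents $l_k+r_k\in\mathbb{Z}$ forces $\gamma_k+\gamma'_k\in\mathbb{Z}$ for all $k$; this is exactly the content of part (1) of Lemma \ref{dualtwist}. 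After this decoupling, the twisted-sector contributions come entirely from the double summation without $\Gamma_{(0,0)}$, and are computed in Proposition \ref{constwist} to equal the first summand in the theorem.

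Next, I would handle the untwisted-sector contributions, where both $\gamma$ and $\gamma'$ are trivial. Here Lemma \ref{conditionuntwisted} isolates the only three configurations of exponent vectors $(l_k),(r_k)$ that can produce a nonzero constant. The first configuration ($c=(0,0)$, $r_i=r_j=-1$) occurs in the summand involving $\Gamma_{(0,0)}$ and its constant contribution $\tfrac{1}{(2\pi{\rm i})^2}1_{(0,0)}\otimes F_{ij}$ was worked out in Lemma \ref{(2)}; the second and third configurations occur in the summand without $\Gamma_{(0,0)}$ and their contributions $\tfrac{1}{(2\pi{\rm i})^2}D_i1_{(0,0)}\otimes F_j$ and $\tfrac{1}{(2\pi{\rm i})^2}D_j1_{(0,0)}\otimes F_i$ were determined in Lemma \ref{(3)}. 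Weighting each by the appropriate coefficient from Definition \ref{pairing}, namely $(j-i)^2x_ix_j$ and $-\tfrac{n(j-i)}{2}x_ix_j$ respectively, and summing over $i<j$ in $\Sigma$ yields precisely the second and third summands of the statement, as recorded in Proposition \ref{conuntwist}.

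The bulk of the work is already done; the only remaining task is the bookkeeping of adding the outputs of Propositions \ref{constwist} and \ref{conuntwist}. The main obstacle I would anticipate is not computational but logical: verifying that the case analyses in Lemmas \ref{dualtwist} and \ref{conditionuntwisted} together exhaust all possible constant contributions, so that no hidden mixed-sector or boundary term is missed. This is handled by observing that any nonzero term must have all exponents of $x_k$ integral and that the cohomology relations in $H_\gamma$ and $H_\gamma^c$ (especially $D_iD_j=0$ for non-adjacent rays and $F_\sigma=0$ for $\sigma\notin\Sigma$) eliminate every other candidate. Once this exhaustiveness is in place, the theorem is simply the sum of the three contributions above.
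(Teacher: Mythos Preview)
Your proposal is correct and follows exactly the paper's approach: the paper's own proof is a one-line ``Follows from Proposition \ref{constwist} and Proposition \ref{conuntwist},'' and you have simply spelled out the bookkeeping behind that sentence. One minor slip: the decoupling condition from Lemma \ref{dualtwist}(1) is $\gamma'=\gamma^*$, not $\gamma'=\gamma$, so it is the cross terms with $\gamma'\neq\gamma^*$ (rather than $\gamma'\neq\gamma$) that vanish; since you correctly cite Lemma \ref{dualtwist}(1) and Proposition \ref{constwist}, this does not affect the argument.
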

\begin{proof}
Follows from Proposition \ref{constwist} and Proposition \ref{conuntwist}.
\end{proof}

\smallskip
\begin{theorem}\label{mainfor(4)}
	We have
	\begin{equation*}
	-\frac{n}{4\pi^2}\chi_{H}^{-1}=\langle\Gamma,\Gamma^{\circ}\rangle.
	\end{equation*}
\end{theorem}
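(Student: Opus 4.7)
\medskip

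\noindent\textbf{Proof proposal.} The plan is to compare the two sides term by term after substituting the explicit formulas from Theorem \ref{chiH-1} for $\chi_H^{-1}$ and from Theorem \ref{pairingforgamma} for $\langle\Gamma,\Gamma^\circ\rangle$. Both sides naturally split into a sum over twisted sectors $\gamma\neq(0,0)$ and a sum of contributions lying in the untwisted component $H_{(0,0)}\otimes H^c_{(0,0)}$, and the three untwisted pieces $1_{(0,0)}\otimes F$, $D_{i_k}\otimes F_{i_l}$ are linearly independent (Corollaries \ref{basisH} and \ref{basisH^c}), so it suffices to match each of these components individually.

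\medskip

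First I would dispatch the twisted sectors. For $\sigma(\gamma)=\sigma_{ij}$, multiplying the corresponding summand of $\chi_H^{-1}$ by $-n/(4\pi^2)$ gives $-\tfrac{n(j-i)}{4\pi^2}\cdot 4\sin^2(\pi\gamma_i)\cdot 1_\gamma\otimes F_{\emptyset,\gamma^*}$, which coincides with the twisted-sector term of $\langle\Gamma,\Gamma^\circ\rangle$. (The apparent $\pi\mathrm{i}\gamma_i$ in Lemma \ref{chiHtw} is just $\pi\gamma_i$, since the relation $\gamma_i+\gamma_j=1$ forces $(1-e^{-2\pi\mathrm{i}\gamma_i})(1-e^{-2\pi\mathrm{i}\gamma_j})=4\sin^2(\pi\gamma_i)$.)

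\medskip

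Next I would handle the component along $1_{(0,0)}\otimes F$. By Proposition \ref{F_ij}, for adjacent rays $i<j$ in $\Sigma_{(1)}$ we have $F_{ij}=F/(j-i)$, while $F_{ij}=0$ for non-adjacent pairs. Hence the corresponding term of $\langle\Gamma,\Gamma^\circ\rangle$ becomes
\begin{equation*}
\sum_{i<j\in\Sigma}\frac{(j-i)^2}{(2\pi\mathrm{i})^2}1_{(0,0)}\otimes F_{ij}
=-\frac{1}{4\pi^2}\Bigl(\sum_{\text{adjacent}}(j-i)\Bigr)\,1_{(0,0)}\otimes F
=-\frac{n}{4\pi^2}\,1_{(0,0)}\otimes F,
\end{equation*}
where the last equality is a telescoping sum from $i_0=0$ to $i_{r+1}=n$. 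This matches the $1_{(0,0)}\otimes F$ piece of $-\tfrac{n}{4\pi^2}\chi_H^{-1}$.

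\medskip

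The main obstacle, and the heart of the proof, is the verification of the $D_{i_k}\otimes F_{i_l}$ part, since the sum on the $\langle\Gamma,\Gamma^\circ\rangle$ side runs over all of $\Sigma_{(1)}$ and involves $D_0, D_n$, whereas the $\chi_H^{-1}$ side is already expressed in the interior basis. The plan is to use the relations $\sum_{i\in\Sigma_{(1)}}D_i=0$ and $\sum_{i\in\Sigma_{(1)}}iD_i=0$ from Proposition \ref{newH} to write
\begin{equation*}
D_0=-\sum_{k=1}^{r}\frac{n-i_k}{n}D_{i_k},\qquad D_n=-\sum_{k=1}^{r}\frac{i_k}{n}D_{i_k},
\end{equation*}
and then, using $F_0=F_n=0$, split the sum $\sum_{i<j\in\Sigma_{(1)}}(j-i)(D_i\otimes F_j+D_j\otimes F_i)$ into (a) the purely interior piece $i,j\in\Sigma^\circ$, which straightforwardly produces the off-diagonal contribution $\sum_{k\neq l}|i_k-i_l|\,D_{i_k}\otimes F_{i_l}$, and (b) the boundary pieces where $i=0$ or $j=n$, which after substituting the expressions for $D_0,D_n$ produce $-\sum_{k,l}\tfrac{i_l(n-i_k)+i_k(n-i_l)}{n}D_{i_k}\otimes F_{i_l}$. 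Combining the two and simplifying gives coefficient $-\tfrac{2i_k(n-i_l)}{n}=2g_{kl}$ for $k<l$, coefficient $-\tfrac{2i_k(n-i_k)}{n}=2g_{kk}$ on the diagonal, and the symmetric value for $k>l$, which is exactly what is needed since $\tfrac{n}{8\pi^2}\cdot 2g_{kl}=\tfrac{n}{4\pi^2}g_{kl}$. The hardest part of this step is keeping the bookkeeping of the four cases ($i=0$; $j=n$; interior; and $i=0,j=n$ which vanishes) straight, but no new idea is required.

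\medskip

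Theorem \ref{mainfor(4)} then follows by adding the matched twisted and untwisted contributions.
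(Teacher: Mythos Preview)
Your proposal is correct and follows essentially the same route as the paper's own proof: both compare Theorem \ref{pairingforgamma} with Theorem \ref{chiH-1} by matching the twisted-sector terms directly, collapsing the $1_{(0,0)}\otimes F_{ij}$ sum via Proposition \ref{F_ij} and a telescoping argument, and then rewriting the $D_i\otimes F_j$ sum in the interior basis using the linear relations on the $D_i$ to recover the coefficients $g_{kl}$. Your write-up is in fact slightly more explicit than the paper's (you spell out the formulas for $D_0,D_n$ and separate the boundary/interior bookkeeping), and your side remark about the $\pi\mathrm{i}\gamma_i$ versus $\pi\gamma_i$ typo in Lemma \ref{chiHtw} is a valid observation.
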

\begin{proof}
We compare the result of Theorem \ref{pairingforgamma} with that of Theorem \ref{chiH-1}, while using Proposition \ref{F_ij}
$$\langle\Gamma,\Gamma^{\circ}\rangle_{\text{twisted}}=-n\sum_{\substack{\gamma\text{ is a twisted sector }\\ \sigma(\gamma)=\sigma_{ij}}}(j-i)\frac{\sin^{2}(\gamma_i\pi)}{\pi^2}1_{\gamma}\otimes F_{\emptyset,\gamma^{*}}=-\frac{n}{4\pi^2}(\chi_{H}^{-1})_{\text{twisted}}.$$
The part contributed by pairing of degree $0$ with degree $2$ terms in $\langle\Gamma,\Gamma^{\circ}\rangle_{(0,0)}$ is
\begin{equation*}
	\sum_{\substack{i,j\in\Sigma\\ i<j}}(j-i)^2\frac{1}{(2\pi\rm i)^2}1_{(0,0)}\otimes F_{ij}=\sum_{\substack{i,j\in\Sigma\\ i<j}}(j-i)\frac{1}{(2\pi\rm i)^2}1_{(0,0)}\otimes F=-\frac{n}{4\pi^2}1_{(0,0)}\otimes F.
\end{equation*}
The part contributed by pairing of degree $1$ with degree $1$ terms in $\langle\Gamma,\Gamma^{\circ}\rangle_{(0,0)}$ is
\begin{equation}\label{8pi}
\begin{split}
&\frac{n}{8\pi^2}\sum_{\substack{i,j\in \Sigma \\0\leq i<j\leq n}}(j-i)(D_i\otimes F_j+D_j\otimes F_i)\\
=&\frac{n}{8\pi^2}\sum_{\substack{j\in \Sigma \\0<j<n}}(jD_0+(n-j)D_n)\otimes F_j+\frac{n}{8\pi^2}\sum_{\substack{i,j\in \Sigma \\0<i,j<n}}|j-i|D_i\otimes F_j.
\end{split}
\end{equation}
After replacing $D_0$ and $D_n$ by their expressions in terms of $D_i$ where $i$ lies in the interior of $\Sigma$,  equation \eqref{8pi} equals
\begin{equation*}
\begin{split}
\frac{n}{8\pi^2}\sum_{\substack{0<i,j<n \\ i,j\in\Sigma}}(\frac{2ij-j-in}{n}+|j-i|)D_i\otimes F_j
\\
=\frac{n}{8\pi^2}\sum_{0<k,l<r}(\frac{2i_ki_l-i_l-i_kn}{n}+|i_l-i_k|)D_{i_k}\otimes F_{i_l}.
\end{split}
\end{equation*}
It is easy to check the coefficient equals $-\frac{n}{4\pi^2}(-g_{kl})$.
	Comparing the formulas in Theorem \ref{pairingforgamma} and Theorem \ref{chiH-1}, we get our result.
\end{proof}

\begin{corollary}
Conjecture \ref{7.3}
holds for  $\mathrm{rk} \, \mathrm{N}=2$.
\end{corollary}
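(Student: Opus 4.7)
The plan is to simply assemble the four parts of Conjecture \ref{7.3} from the ingredients already established in Sections \ref{pairingofsol}--\ref{pluginGamma}. I would take the polynomials $p_{c,d}$ to be the coefficients appearing in Definition \ref{pairing}, up to an overall normalization constant to be fixed at the end so that part (4) holds on the nose.

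For parts (1), (2), (3): the explicit formula in Definition \ref{pairing} has only finitely many nonzero coefficients (only pairs $(c,d)$ with $c+d = v_i+v_j$ for some $i<j$ and $\deg c = 1$), giving (1); Theorem \ref{pairingthm} shows the sum is constant in $(x_0,\ldots,x_n)$, giving (2); and Corollary \ref{nondegen} shows the pairing is non-degenerate on the full solution space, giving (3). The last remark of Section \ref{pairingofsol} extends (1)--(3) from the gapless case to general two-dimensional cones by setting $x_i = 0$ at missing lattice points.

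For part (4): the identification of the pairing with the inverse Euler pairing is exactly the content of Theorem \ref{mainfor(4)}, which shows
\[
\langle \Gamma, \Gamma^{\circ}\rangle = -\frac{n}{4\pi^2}\,\chi_{H}^{-1}.
\]
Rescaling the pairing in Definition \ref{pairing} (equivalently the $p_{c,d}$) by $-\tfrac{4\pi^2}{n}$ puts the two sides on equal footing. Then Lemma \ref{chvee} transports $\chi_{H}$ (and hence its inverse) to the Euler characteristic pairing $\chi$ between $K_0(\mathbb{P}_\Sigma)$ and $K_0^c(\mathbb{P}_\Sigma)$ via the Chern character isomorphisms $ch$ and $ch^c$ of Proposition \ref{definech,ch^c}. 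Because $\Gamma$ and $\Gamma^{\circ}$ are the Gamma-series isomorphisms built from $ch$ and $ch^c$ (Proposition \ref{gammasolution}), this yields exactly (4) for any projective simplicial subdivision $\Sigma$.

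The substantive work has already been done upstream; the only genuine obstacle was Theorem \ref{mainfor(4)}, i.e.\ matching the two explicit tensors $\langle \Gamma,\Gamma^{\circ}\rangle$ and $\chi_{H}^{-1}$ term-by-term (twisted sector contributions via Proposition \ref{constwist}, and the degree $(0,2)$ and $(1,1)$ parts of the untwisted sector via Proposition \ref{conuntwist} together with the inversion formula in Lemma \ref{inversM}). Once that identity is in hand, the corollary is assembly.
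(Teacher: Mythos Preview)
Your proposal is correct and matches the paper's approach: the corollary is stated there without an explicit proof, immediately after Theorem \ref{mainfor(4)}, precisely because parts (1)--(3) were already handled by Definition \ref{pairing}, Theorem \ref{pairingthm}, and Corollary \ref{nondegen} (plus the remark on gaps), and part (4) follows from Theorem \ref{mainfor(4)} together with the compatibility Lemma \ref{chvee}. Your observation about the overall normalization $-4\pi^2/n$ is the only detail the paper leaves to the reader, and you have handled it correctly.
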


\medskip

In \cite{BorisovHorja}, the Gamma series solutions of Definition \ref{gammac} were expected to be compatible with pullback-pushforward and analytic continuation. Let $\Sigma_1$ and $\Sigma_2$ be two adjacent triangulations. Then the stacks $\mathbb{P}_{\Sigma_1}$ and $\mathbb{P}_{\Sigma_2}$ differ by a flop that is a composition of weighted blowup and weighted blowdown. It is implied by Theorem $4.2$ of \cite{Ka1} that the pullback-pushforward functors $D(\mathbb{P}_{\Sigma_1}) \rightarrow D(\mathbb{P}_{\Sigma_2})$ and $D^c(\mathbb{P}_{\Sigma_1}) \rightarrow D^c(\mathbb{P}_{\Sigma_2})$ are equivalences. Thus there are induced natural group isomorphisms $pp:K_0(\mathbb{P}_{\Sigma_1}) \rightarrow K_0(\mathbb{P}_{\Sigma_2})$ and $pp:K^c_0(\mathbb{P}_{\Sigma_1}) \rightarrow K^c_0(\mathbb{P}_{\Sigma_2})$. The Conjecture \ref{7.1} was proposed in \cite{BorisovHorja} to describe the compatibility between the Gamma series solutions and  pullback-pushforward and analytic continuation. We are now ready to prove it in dimension two.

\begin{theorem}
Conjecture \ref{7.1} holds for $\mathrm{rk} \, \mathrm{N}=2$.
\end{theorem}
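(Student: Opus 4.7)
The plan is to obtain Conjecture~\ref{7.1} as a consequence of Theorem~\ref{mainfor(4)}, together with the fact that both the derived pullback-pushforward and the analytic continuation preserve the natural pairings on their respective sides. First, the pullback-pushforward $pp$ preserves the Euler characteristic pairing $\chi$: by Kawamata's Theorem 4.2 in \cite{Ka1} it is induced by a Fourier--Mukai type derived equivalence, and such equivalences automatically preserve the Euler pairing, so $\chi_{\Sigma_2}(pp(a),pp(b)) = \chi_{\Sigma_1}(a,b)$ for $a \in K_0(\mathbb{P}_{\Sigma_1})$ and $b \in K_0^c(\mathbb{P}_{\Sigma_1})$. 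Dually, the bilinear pairing $\langle\cdot,\cdot\rangle$ of Definition~\ref{pairing} is constant in $(x_0,\ldots,x_n)$ by Theorem~\ref{pairingthm}, and hence is preserved by any analytic continuation.

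Combining these invariances with Theorem~\ref{mainfor(4)} applied separately to $\Sigma_1$ and $\Sigma_2$, one deduces that for any $\phi \in K_0(\mathbb{P}_{\Sigma_2})^\vee$ and $\psi \in K_0^c(\mathbb{P}_{\Sigma_2})^\vee$ we have
\begin{equation*}
\langle a.c.(\Gamma_{\Sigma_2}(\phi)),\; a.c.(\Gamma^\circ_{\Sigma_2}(\psi))\rangle
= -\tfrac{n}{4\pi^2}(\phi\otimes\psi)(\chi^{-1}_{\Sigma_2})
= -\tfrac{n}{4\pi^2}(pp^\vee\phi\otimes pp^\vee\psi)(\chi^{-1}_{\Sigma_1})
= \langle \Gamma_{\Sigma_1}(pp^\vee\phi),\; \Gamma^\circ_{\Sigma_1}(pp^\vee\psi)\rangle.
\end{equation*}
Thus the pair of maps $(a.c.\circ \Gamma_{\Sigma_2},\, a.c.\circ \Gamma^\circ_{\Sigma_2})$ and $(\Gamma_{\Sigma_1}\circ pp^\vee,\, \Gamma^\circ_{\Sigma_1}\circ pp^\vee)$ induce the same bilinear pairing with respect to $\langle\cdot,\cdot\rangle$.

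To promote this pairing identity to actual equality of the maps, I would compare leading asymptotics near the limit point of $\Sigma_1$, using the triangulation-independent contour-integral bases $\{\Phi^0,\Phi^{\gamma_1},\ldots,\Phi^{\gamma_n}\}$ and $\{\Psi^{\lambda_0},\Psi^{\gamma_1},\ldots,\Psi^{\gamma_n}\}$ constructed in Section~\ref{bbGKZ} as a bridge between the two expansions. Each Gamma series $\Gamma_\Sigma$ is distinguished inside $bbGKZ(C,0)$ by a specific leading term at the $\Sigma$-limit, governed by the natural basis of $K_0(\mathbb{P}_\Sigma)^\vee$; the analytic continuation of $\Gamma_{\Sigma_2}$ to a neighborhood of the $\Sigma_1$-limit along the path of \cite{BorisovHorja2} produces a leading expansion which one verifies matches that of $\Gamma_{\Sigma_1}\circ pp^\vee$, and similarly for $\Gamma^\circ$.

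The main obstacle is this final asymptotic-matching step: one must explicitly compute the monodromy of the contour basis along the specific analytic continuation path in \cite{BorisovHorja2}, and identify it with the explicit formulae for $pp$ on the toric K-theory generators $R_i$. In rank $2$, however, the computation is tractable because adjacent triangulations differ by a single ray, corresponding geometrically to one root $\xi_k$ of $f(z)$ being braided past its neighbors, so the resulting braid action on the contour bases $\{\Phi^{\gamma_i}\}$ and $\{\Psi^{\gamma_i}\}$ is completely explicit. Combined with the pairing-preservation rigidity established above, this matching is sufficient to conclude that both commutative diagrams of Conjecture~\ref{7.1} hold.
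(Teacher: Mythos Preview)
Your pairing-invariance argument is correct and is precisely the mechanism the paper uses. What you are missing is that the asymptotic-matching step is unnecessary: the top diagram of Conjecture~\ref{7.1} is already a theorem in the literature. In the case with no gaps among the $v_i$ (to which one reduces by setting the appropriate $x_i$ to zero), the system $bbGKZ(C,0)$ coincides with the ordinary GKZ system, and the commutativity of the first square is exactly the main result of \cite{BorisovHorja2}. The paper simply cites this.

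Once the top diagram is known, your own pairing identity finishes the proof of the bottom diagram by non-degeneracy: substituting $a.c.\circ\Gamma_{\Sigma_2}=\Gamma_{\Sigma_1}\circ pp^\vee$ into your displayed equation gives
\[
\big\langle \Gamma_{\Sigma_1}(pp^\vee\phi),\ a.c.(\Gamma^\circ_{\Sigma_2}(\psi))\big\rangle
=\big\langle \Gamma_{\Sigma_1}(pp^\vee\phi),\ \Gamma^\circ_{\Sigma_1}(pp^\vee\psi)\big\rangle
\]
for all $\phi,\psi$; since $\Gamma_{\Sigma_1}\circ pp^\vee$ is an isomorphism and $\langle\cdot,\cdot\rangle$ is non-degenerate (Corollary~\ref{nondegen}), this forces $a.c.\circ\Gamma^\circ_{\Sigma_2}=\Gamma^\circ_{\Sigma_1}\circ pp^\vee$. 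So your proposed braid/asymptotic computation, which you correctly identify as an obstacle, can be dropped entirely. The gap in your write-up is that as stated it is only a sketch of that computation, not a proof; the fix is to replace it with the citation to \cite{BorisovHorja2}.
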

\begin{proof}
It suffices to consider the case $v_0=(0,1),\ldots,v_n=(n,1)$, because the others are obtained by setting some variables to zero. In this case, bbGKZ is the usual GKZ. The top diagram is true by previous work \cite{BorisovHorja2}. Also, our pairing in Definition \ref{pairing} has no monodromy. Then the bottom diagram is true by duality.
\end{proof}

\section{Future directions}\label{futuredire}
We expect to extend this calculation to higher dimensions.
Moreover, we would like to be able to categorify the bbGKZ systems to obtain two
families of triangulated categories over the parameter space of $x$ such that in the neighborhoods of toric degeneracy points the Gamma series provide some kind of character map to the corresponding $K-$theory.

\end{document}